\DeclareMathOperator{\supp}{supp}
\DeclareMathOperator{\inid}{\text{in}}
\DeclareMathOperator{\Lex}{Lex\,}
\DeclareMathOperator{\Revlex}{Revlex\,}
\newcommand{\fieldk}{\Bbbk}
\DeclareMathOperator{\Spank}{\text{Span}_{\fieldk}}
\newenvironment{enum}
{\begin{enumerate}

}
{\end{enumerate}}
\newenvironment{enum*}
{\parskip=-4pt
\begin{enumerate*}

}
{\end{enumerate*}}
\theoremstyle{plain}
\newtheorem{theorem}{Theorem}[section]
\newtheorem{proposition}[theorem]{Proposition}
\newtheorem{lemma}[theorem]{Lemma}
\newtheorem{corollary}[theorem]{Corollary}
\theoremstyle{definition}
\newtheorem{definition}[theorem]{Definition}
\newtheorem{remark}[theorem]{Remark}
\begin{document}
\title[Hilbert functions of colored quotient rings]{Hilbert functions of colored quotient rings and a generalization of the Clements-Lindstr\"{o}m theorem}
\author{Kai Fong Ernest Chong}
\address{Department of Mathematics\\
      Cornell University\\
      Ithaca, NY 14853-4201, USA}
\email{kc343@cornell.edu}

\keywords{Hilbert function, Macaulay-Lex rings, Kruskal-Katona theorem, Clements-Lindstr\"{o}m theorem, $f$-vector, colored complexes}

\begin{abstract}
Given a polynomial ring $S = \Bbbk[x_1, \dots, x_n]$ over a field $\Bbbk$, and a monomial ideal $M$ of $S$, we say the quotient ring $R = S/M$ is Macaulay-Lex if for every graded ideal of $R$, there exists a lexicographic ideal of $R$ with the same Hilbert function. In this paper, we introduce a class of quotient rings with combinatorial significance, which we call colored quotient rings. This class of rings include Clements-Lindstr\"{o}m rings and colored squarefree rings as special cases that are known to be Macaulay-Lex. We construct two new classes of Macaulay-Lex rings, characterize all colored quotient rings that are Macaulay-Lex, and give a simultaneous generalization of both the Clements-Lindstr\"{o}m theorem and the Frankl-F\"{u}redi-Kalai theorem. We also show that the $f$-vectors of $(a_1, \dots, a_n)$-colored simplicial complexes or multicomplexes are never characterized by ``reverse-lexicographic'' complexes or multicomplexes when $n>1$ and $(a_1, \dots, a_n) \neq (1, \dots, 1)$.
\end{abstract}

\maketitle

\section{Introduction and overview}\label{sec:Intro}
The study of Hilbert functions is one of the central themes in commutative algebra, and much of our understanding is enhanced by insights in combinatorics. This rich interplay between commutative algebra and combinatorics can be traced back to Macaulay's 1927 paper~\cite{Macaulay1927}, in which Macaulay characterized the possible Hilbert functions of graded ideals in the polynomial ring $S:= \fieldk[x_1, \dots, x_n]$.

Macaulay's key idea was that for every graded ideal of $S$, there exists a lexicographic (abbreviated: lex) ideal of $S$ with the same Hilbert function. Lex ideals, sometimes also known as lex-segment ideals, are monomial ideals defined combinatorially: Let $<_{\ell ex}$ denote the degree-lexicographic order on the monomials in $S$ induced by the linear order $x_1 > \dots > x_n$. A {\it lex ideal} of $S$ is a monomial ideal $L$ such that if $m, m^{\prime}$ are monomials in $S$ satisfying $m\in L$, $\deg(m) = \deg(m^{\prime})$ and $m <_{\ell ex} m^{\prime}$, then $m^{\prime} \in L$. These lex ideals play a crucial role in Hartshorne's proof~\cite{Hartshorne1966:ConnectednessHilbertScheme} that Grothendieck's Hilbert scheme is connected, and every lex ideal of $S$ has maximal Betti numbers among all graded ideals with the same Hilbert function~\cite{Bigatti1993:UpperBoundsBetti,Hulett1993:MaxBettiNumbers,Pardue1996:MaxBettiNumbers}. Moreover, in combinatorics, Macaulay's theorem yields numerical characterizations of both the $f$-vectors of multicomplexes, and the $h$-vectors of Cohen-Macaulay complexes~\cite{Stanley1977:CohenMacaulayComplexes}.

Given a monomial ideal $M$ of $S$, we can similarly define lex ideals in the quotient ring $S/M$. Motivated by Macaulay's theorem, Mermin and Peeva~\cite{MerminPeeva2006:LexifyingIdeals} asked if there is an analogous characterization of the Hilbert functions of graded ideals of $S/M$, thereby introducing the notion of Macaulay-Lex rings. In this general context, we say $S/M$ is a {\it Macaulay-Lex ring} (or $M$ is a {\it Macaulay-Lex ideal}) if for every graded ideal of $S/M$, there exists a lex ideal of $S/M$ with the same Hilbert function. In 1969, Clements and Lindstr\"{o}m~\cite{ClementsLindstrom1969} generalized Macaulay's theorem by showing that if $2 \leq e_1 \leq \dots \leq e_n \leq \infty$, then $Q := S/\langle x_1^{e_1}, \dots, x_n^{e_n} \rangle$ is Macaulay-Lex (where $x_i^{\infty} = 0$). Such quotient rings $Q$ are called Clements-Lindstr\"{o}m rings, and the case $e_1 = \dots = e_n = 2$, first proven independently by Sch\"{u}tzenberger~\cite{Schutzenberger1959}, Kruskal~\cite{Kruskal1963} and Katona~\cite{Katona1968} around the 1960s, is of particular interest in combinatorics (see, e.g., \cite[Sec. 8]{Greene-Kleitman}), since it yields a numerical characterization of the $f$-vectors of simplicial complexes.

Although basic properties of Macaulay-Lex rings are now well understood~\cite{Mermin2006:LexlikeSequences,MerminPeeva2006:LexifyingIdeals,MerminPeeva2007:HilbertFunctionsLexIdeals,Shakin2001:GeneralizationMacaulayThm,Shakin2005:MonomialIdeals}, a complete list of all Macaulay-Lex ideals is known only for $n\leq 2$, with the case $n=2$ already being quite complicated~\cite{Shakin2001:GeneralizationMacaulayThm}. As for $n\geq 3$, several partial results are known~\cite{Abedelfatah2013:MacLexRings,Shakin2001:GeneralizationMacaulayThm,Shakin2003:PiecewiseLexsegment,Shakin2005:MonomialIdeals}, but otherwise, finding an explicit characterization of all possible Macaulay-Lex ideals for arbitrary $n$ remains a wide open problem. Recently, Mermin and Murai~\cite{MerminMurai2010} constructed a new class of Macaulay-Lex rings that they call colored squarefree rings. Their construction was inspired by the Frankl-F\"{u}redi-Kalai theorem~\cite{FFK1988}, a combinatorial result that gives a numerical characterization of the $f$-vectors of colored complexes, which they refined further; see \cite[Remark 2.12]{MerminMurai2010} for details.

In this paper, we extend colored squarefree rings and Clements-Lindstr\"{o}m rings to a common class of quotient rings. The combinatorial analogues of these two classes of rings are colored complexes and ``uncolored'' multicomplexes respectively, both of which are special cases of generalized colored multicomplexes. Motivated by this observation, we define colored quotient rings:

\begin{definition}\label{defn-coloredQuot}
Let ${\bf a} = (a_1, \dots, a_n)$, $\boldsymbol{\lambda} = (\lambda_1, \dots, \lambda_n)$ be $n$-tuples such that $1\leq a_i \leq \infty$, $1\leq \lambda_i < \infty$ for each $i$, and let $X_{\boldsymbol{\lambda}} := \bigsqcup_{i=1}^n X_i$ be a set of variables, where $X_i = \{x_{i,1}, \dots, x_{i,\lambda_i}\}$ for each $i$. Fix a linear order on $X_{\boldsymbol{\lambda}}$ by $x_{i,j} > x_{i^{\prime}, j^{\prime}}$ if $j > j^{\prime}$; or $j = j^{\prime}$ and $i < i^{\prime}$. Let $\fieldk[X_{\boldsymbol{\lambda}}]$ be a polynomial ring on the set of variables $X_{\boldsymbol{\lambda}}$ over a field $\fieldk$, graded by $\deg(x_{i,j}) = 1$, and let $Q_{\bf a} := \sum_{i=1}^n \langle X_i \rangle^{a_i+1}$ be a monomial ideal of $\fieldk[X_{\boldsymbol{\lambda}}]$, where $\langle X_i\rangle^{\infty} = 0$. A {\it colored quotient ring} of {\it type} ${\bf a}$ and {\it composition} $\boldsymbol{\lambda}$ is the quotient ring $W := \fieldk[X_{\boldsymbol{\lambda}}]/Q_{\bf a}$.
\end{definition}

Colored quotient rings include both colored squarefree rings and Clements-Lindstr\"{o}m rings. Specifically, a colored squarefree ring is a colored quotient ring of type $(1, \dots, 1)$ and composition $(\lambda_1, \dots, \lambda_n)$ satisfying $\lambda_1 \geq \dots \geq \lambda_n$, while a Clements-Lindstr\"{o}m ring is a colored quotient ring of composition $(1, \dots, 1)$ and type $(a_1, \dots, a_n)$ satisfying $a_1 \leq \dots \leq a_n$. Our first main result is the following characterization of all possible Macaulay-Lex colored quotient rings.

\begin{theorem}\label{main-thm-MacLex}
A colored quotient ring of type ${\bf a} = (a_1, \dots, a_n)$ and composition $\boldsymbol{\lambda} = (\lambda_1, \dots, \lambda_n)$ is Macaulay-Lex if and only if (at least) one of the following conditions hold:
\begin{enum*}
\item ${\bf a} = (1, \dots, 1, a_{r+1}, \dots, a_n)$, $\boldsymbol{\lambda} = (\lambda_1, \dots, \lambda_r, 1, \dots, 1)$ and $a_{r+1} \leq \dots \leq a_n$ for some integer $r$ satisfying $0\leq r\leq n$.\label{cond:mixed}
\item $a_1\leq \dots \leq a_n$ and $\lambda_i = 1$ for all $i\neq 1$.\label{cond:hinged}
\end{enum*}
\end{theorem}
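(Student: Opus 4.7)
The plan is to prove the two directions separately. For the ``if'' direction I would establish two new classes of Macaulay-Lex rings corresponding to conditions~\ref{cond:mixed} and~\ref{cond:hinged}, and for the ``only if'' direction I would rule out all other colored quotient rings by exhibiting explicit counterexamples.

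For sufficiency, the standard technique is a lex-compression argument: given a graded ideal $I$ of $W$, one forms its degree-by-degree lexification $I^{\mathrm{lex}}$ by replacing each $I_d$ with the $\fieldk$-span of the lex-largest $\dim_{\fieldk} I_d$ monomials, and then shows that $I^{\mathrm{lex}}$ remains an ideal of $W$, i.e., that lex-compression commutes with multiplication by each variable $x_{i,j}$ modulo the relations defining $W$. Under condition~\ref{cond:mixed}, $W$ is a hybrid of a colored squarefree ring on the first $r$ blocks $X_1, \dots, X_r$ and a Clements-Lindstr\"om ring on the last $n-r$ single-variable blocks, so I would combine the compression arguments of Mermin-Murai on the squarefree side with the classical Clements-Lindstr\"om compression on the tail, verifying that the two can be applied one block at a time while preserving lex-segment structure. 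Under condition~\ref{cond:hinged}, $W$ has a single colored block $X_1$ (of composition $\lambda_1$ and exponent bound $a_1$) sitting above a Clements-Lindstr\"om tail of single-variable blocks $X_2, \dots, X_n$, and I would design a new compression that respects both the bound $\langle X_1 \rangle^{a_1+1}=0$ and the power relations $x_{i,1}^{a_i+1}=0$ for $i \geq 2$, compatibly with the linear order on $X_{\boldsymbol{\lambda}}$ fixed in Definition~\ref{defn-coloredQuot}.

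For necessity, assuming neither condition holds, the failure splits into overlapping cases: either the squarefree indices (where $a_i=1$ and $\lambda_i\geq 2$) fail to form an initial segment in $\mathbf{a}$, or some index $i \geq 2$ carries $\lambda_i \geq 2$ while coexisting with a block of exponent bound strictly greater than $1$, or the tail portion of $\mathbf{a}$ is not non-decreasing. In each case I would produce an explicit low-degree graded monomial ideal $I \subset W$ whose Hilbert function is not achieved by any lex ideal of $W$, typically by showing that the forced lex-segment in one degree cannot be extended to match the dimension in the next degree; the counterexamples will be small enough that the obstruction can be read off from a direct count of lex-initial monomials in the relevant graded piece.

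The main obstacle will be the sufficiency proof for condition~\ref{cond:hinged}. This is the genuinely new case not implied by any previously known Macaulay-Lex theorem, since it forces us to handle a colored block $X_1$ of arbitrary composition and exponent simultaneously with unrestricted Clements-Lindstr\"om-type power relations on the remaining blocks; neither the Mermin-Murai technique (which requires all blocks to be squarefree) nor the Clements-Lindstr\"om compression (which requires all blocks to be singletons) applies on its own. The key combinatorial step, verifying that lex-compression commutes with multiplication by each $x_{1,j} \in X_1$ in the presence of both kinds of relations, is where the bookkeeping is most delicate and where the new ideas in the proof will have to live.
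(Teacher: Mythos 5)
Your plan is correct in outline but it leaves out the step that constitutes essentially all of the work in the sufficiency direction. Reducing the problem to ``show that degree-by-degree lexification stays an ideal'' is just a restatement of the Macaulay-Lex property (via the standard equivalence with the shadow inequalities, Theorem \ref{thm:charMacLexRings}); the content is the inequality itself. Your proposed mechanism --- apply Mermin--Murai-style compression on the squarefree blocks and Clements--Lindstr\"om compression on the singleton blocks, ``one block at a time'' --- only ever produces a \emph{quasi-compressed} monomial space (compressed with respect to each complementary set $X_{\boldsymbol{\lambda}}-X_t$), and such a space need not be lex- or revlex-segment; indeed for general compositions it need not even be compressed in the usual sense. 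So after compression terminates you are left with exactly the hard problem: given a quasi-compressed space that is not revlex-segment, produce a space with the same dimension, strictly smaller in a suitable potential, and with no larger lower shadow. In the paper this is Theorem \ref{thm:CruxPart}, whose proof occupies all of Section \ref{sec:ProofOfCruxResult} and eight lemmas, and it is precisely where the mixed/hinged hypotheses enter in an essential way (the statement is false otherwise). Your proposal does not acknowledge that this exchange step exists, let alone indicate how to carry it out, and ``verifying that lex-compression commutes with multiplication by each $x_{1,j}$'' is not a correct description of it. A secondary omission: even the statement that block-compression does not increase the shadow is not bookkeeping; in the paper it is proved by induction on $n$, using the already-established Macaulay-Lex property of the smaller colored quotient rings $\widehat{W}^{\langle t\rangle}$ (Theorem \ref{thm:color-comp}), and this induction has to be run simultaneously with the crux exchange result.

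On necessity, your route (explicit small ideals whose Hilbert functions cannot be lexified) differs from the paper's, which avoids ad hoc counterexamples by quotienting by lex ideals and invoking general structural facts about Macaulay-Lex ideals --- stability under adding lex ideals, the minimal-degree-generator criterion, and the truncation property (Theorem \ref{thm:MacLexProperties}) --- to force $a_1\leq\dots\leq a_n$ and then $a_i=1$ for $i\leq s$ where $s=\max\{i:\lambda_i\geq 2\}$ (Proposition \ref{prop:necessaryCondOnType}, Theorem \ref{thm:MacLex=>MixedOrHinged}). Your approach could in principle be made to work, but as written it is an unsubstantiated claim that suitable counterexamples exist in every failing case, whereas the reduction argument disposes of all cases uniformly. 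As it stands, the proposal identifies the right landscape but has a genuine gap at the heart of the sufficiency proof.
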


In particular, the case ${\bf a} = (1, \dots, 1)$ extends Mermin-Murai's work~\cite{MerminMurai2010} and says $\fieldk[X_{\boldsymbol{\lambda}}]/Q_{(1,\dots, 1)}$ is Macaulay-Lex for every composition, not just for compositions satisfying $\lambda_1 \geq \dots \geq \lambda_n$. In contrast, the condition $a_1 \leq \dots \leq a_n$ on Clements-Lindstr\"{o}m rings is necessary for the Macaulay-Lex property to hold. Remarkably, we also get new Macaulay-Lex rings that are `hybrids' of both Clements-Lindstr\"{o}m rings and colored squarefree rings, thereby simultaneously generalizing both the Clements-Lindstr\"{o}m theorem~\cite{ClementsLindstrom1969} and the Frankl-F\"{u}redi-Kalai theorem~\cite{FFK1988}.

Let ${\bf a} = (a_1, \dots, a_n)$ be an $n$-tuple of positive integers. An {\it ${\bf a}$-colored complex} is a simplicial complex $\Delta$ on a non-empty vertex set $V$, together with an ordered partition $(V_1, \dots, V_n)$ of $V$, such that every face $F$ of $\Delta$ satisfies $|F\cap V_i| \leq a_i$. If we treat each $V_i$ as the set of vertices with the `$i$-th color', then every face of $\Delta$ has at most $a_i$ vertices of the $i$-th color. This generalizes the usual notion of colored complexes that many authors use, which coincides with our definition of $(1,\dots, 1)$-colored complexes. The notion of ${\bf a}$-colored multicomplexes can be defined similarly; see Section \ref{sec:GenColoredMulticomplexes} for the definitions of relevant terminology. The Frankl-F\"{u}redi-Kalai theorem~\cite{FFK1988} tells us that for every $(1, \dots, 1)$-colored complex, there exists a ``reverse-lexicographic'' $(1, \dots, 1)$-colored complex with the same $f$-vector. Our next result shows that an analogous statement does not hold for ${\bf a}$-colored complexes or multicomplexes when $n>1$ and ${\bf a} \neq (1, \dots, 1)$.

\begin{theorem}\label{main-thm-f-vectors-colored}
Let ${\bf a} = (a_1, \dots, a_n)$ be an $n$-tuple of positive integers. The following are equivalent:
\begin{enum*}
\item For every ${\bf a}$-colored complex (resp., multicomplex), there exists a reverse-lexicographic ${\bf a}$-colored complex (resp., multicomplex) with the same $f$-vector.
\item Either $n=1$, or ${\bf a} = (1, \dots, 1)$.
\end{enum*}
\end{theorem}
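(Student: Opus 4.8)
The plan is to translate condition~(i) into a statement about the Macaulay--Lex property of colored quotient rings and then read it off from Theorem~\ref{main-thm-MacLex}. First I would record the standard dictionary between $f$-vectors and co-Hilbert functions. For a composition $\boldsymbol{\lambda}$, an ${\bf a}$-colored multicomplex with color-class sizes $\boldsymbol{\lambda}$ is exactly an order ideal of monomials of $W := \fieldk[X_{\boldsymbol{\lambda}}]/Q_{\bf a}$, i.e.\ the complement (in the monomial basis) of a monomial ideal $J$; its $f$-vector is recovered from $f_{d-1} = \dim_{\fieldk} W_d - \dim_{\fieldk} J_d$, and, for a suitable ordering of the $n$ colors, the multicomplex is reverse-lexicographic precisely when $J$ is a lex ideal of $W$ (the word ``reverse'' accounting both for the passage to complements and for the orientation of the order in Definition~\ref{defn-coloredQuot}). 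For simplicial complexes the same dictionary holds with $W$ replaced by the squarefree colored quotient ring $\overline{W} := \fieldk[X_{\boldsymbol{\lambda}}]/(P + Q_{\bf a})$, where $P$ is generated by the squares of the variables. Since, over a quotient of a polynomial ring by a monomial ideal, graded ideals and monomial ideals realize the same Hilbert functions, and since the quantifier ``there exists a reverse-lexicographic complex'' may, after a short monotonicity argument, be taken to range over complexes with the same composition $\boldsymbol{\lambda}$, condition~(i) for multicomplexes amounts to: $W$ is Macaulay--Lex for every composition $\boldsymbol{\lambda}$; and likewise for complexes, with $\overline{W}$.

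Granting this, the implication (ii) $\Rightarrow$ (i) is quick. If $n=1$, then $W = \fieldk[X_1]/\langle X_1\rangle^{a_1+1}$ is a truncated polynomial ring, which is Macaulay--Lex for every $\lambda_1$: condition~\ref{cond:hinged} of Theorem~\ref{main-thm-MacLex} holds automatically when $n=1$, and this is in any case Macaulay's theorem together with the observation that the vanishing $f_{a_1}=0$ is automatically inherited by the lex multicomplex; the squarefree ring $\overline{W}$ is Macaulay--Lex by the same argument with the Kruskal--Katona theorem in place of Macaulay's. If ${\bf a}=(1,\dots,1)$, then $W$ is Macaulay--Lex for every composition by Theorem~\ref{main-thm-MacLex}, taking $r=n$ in condition~\ref{cond:mixed}; and since a $(1,\dots,1)$-colored multicomplex uses at most one variable from each color class, it is automatically squarefree, hence literally the same object as a $(1,\dots,1)$-colored complex, so the complex case coincides with (and recovers) the Frankl--F\"uredi--Kalai theorem. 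In both cases (i) follows from the dictionary.

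For (i) $\Rightarrow$ (ii) I would argue contrapositively. Suppose $n>1$ and ${\bf a}\neq(1,\dots,1)$, and take $\boldsymbol{\lambda}$ with $\lambda_i\geq 2$ for all $i$. Then condition~\ref{cond:hinged} of Theorem~\ref{main-thm-MacLex} fails because $\lambda_2\neq 1$, and condition~\ref{cond:mixed} fails because $\boldsymbol{\lambda}$ has no trailing $1$'s and hence would force $r=n$, i.e.\ ${\bf a}=(1,\dots,1)$. So $W$ is not Macaulay--Lex, and the ``only if'' direction of Theorem~\ref{main-thm-MacLex} supplies a monomial ideal $J$ of $W$ whose Hilbert function is realized by no lex ideal of $W$. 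Complementing, the corresponding ${\bf a}$-colored multicomplex has an $f$-vector realized by no reverse-lexicographic ${\bf a}$-colored multicomplex; arranging that the complement of $J$ contains every monomial of low degree (which forces any realization of its $f$-vector to be over the same composition), or else invoking the monotone behaviour of lex segments under enlargement of the composition, rules out reverse-lexicographic realizations over any other composition as well. This settles the multicomplex half of (i) $\Rightarrow$ (ii). For the simplicial-complex half one needs the analogous failure of the Macaulay--Lex property for $\overline{W}$ with all $\lambda_i$ large; this is obtained by transferring the failure from $W$ to $\overline{W}$ through a polarization-type argument --- in the same spirit as the passage between Clements--Lindstr\"om rings and colored squarefree rings underlying the Frankl--F\"uredi--Kalai theorem --- after which complementing the witnessing squarefree monomial ideal yields the desired ${\bf a}$-colored complex.

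The step I expect to be the main obstacle is this last one: Theorem~\ref{main-thm-MacLex} is stated for the rings $W = \fieldk[X_{\boldsymbol{\lambda}}]/Q_{\bf a}$, but ${\bf a}$-colored simplicial complexes are governed by the squarefree rings $\overline{W}$, and the two genuinely differ once some $a_i\geq 2$ and $\lambda_i\geq 2$ --- they coincide only in the case ${\bf a}=(1,\dots,1)$ already handled by Frankl--F\"uredi--Kalai. One therefore has to either establish a transfer principle for the Macaulay--Lex property across squarefree-ization, or re-run the explicit counterexample constructions from the ``only if'' part of Theorem~\ref{main-thm-MacLex} directly in the squarefree setting. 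A secondary, more routine difficulty is the bookkeeping in the dictionary --- pinning down the correct ordering of the colors and verifying that passing to a larger vertex partition cannot rescue a bad $f$-vector.
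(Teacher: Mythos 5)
Your overall strategy --- translating condition (i) into a Macaulay--Lex question and reading the answer off Theorem~\ref{main-thm-MacLex} --- is the right framework, and it matches the paper's in spirit. The dictionary you sketch (order ideals of monomials $\leftrightarrow$ multicomplexes, lex ideals $\leftrightarrow$ revlex multicomplexes, quantify over all compositions $\boldsymbol{\lambda}$) is correct and is carried out carefully in Lemma~\ref{lemma:MacLexMulticomplexCorresp} and Proposition~\ref{prop:revlexmultiequiv}. For the multicomplex half, your direct appeal to the ``only if'' direction of Theorem~\ref{main-thm-MacLex} (with all $\lambda_i\geq 2$) is a legitimate shortcut, though the paper chooses instead to re-derive the failure directly via Corollary~\ref{cor:truncatedNotMacLex} so as to cover multicomplexes and complexes uniformly. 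Your $(\text{ii})\Rightarrow(\text{i})$ direction is fine; the $n=1$ complex case corresponds, in the paper, to Lemma~\ref{lemma:truncated-n=1-case} with $\phi\equiv 1$, where the condition $\min\{a_1,\phi(x_{1,1})\}\geq\cdots\geq\min\{a_1,\phi(x_{1,\lambda})\}$ holds trivially.

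The genuine gap --- which you yourself flag as ``the main obstacle'' --- is the simplicial-complex half of $(\text{i})\Rightarrow(\text{ii})$. You propose to transfer the non-Macaulay-Lex failure from $W=\fieldk[X_{\boldsymbol{\lambda}}]/Q_{\bf a}$ to the squarefree truncation $\overline{W}=\fieldk[X_{\boldsymbol{\lambda}}]/(Q_{\bf a}+\langle x^2:x\in X_{\boldsymbol{\lambda}}\rangle)$ via ``a polarization-type argument in the same spirit as the passage between Clements--Lindstr\"om rings and colored squarefree rings.'' That passage is specific to ${\bf a}={\bf 1}_n$, and there is no known mechanism that transports a \emph{failure} of the Macaulay--Lex property from $W$ to $\overline{W}$ when some $a_i\geq 2$ and $\lambda_i\geq 2$: the witnessing monomial space in $W$ may involve high powers of variables, and compressing or polarizing it does not produce a witness in the squarefree ring. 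The paper avoids this problem entirely: Proposition~\ref{prop:counter-eg-a>d+1} constructs, by hand, an explicit $W_d(\phi)$-monomial space $A$ with $|\partial(A)|<|\partial(\Revlex_{W_d(\phi)}(A))|$ \emph{directly inside the truncated ring} $W(\phi)=\fieldk[X_{\boldsymbol{\lambda}}]/(Q_{\bf a}+T^{\phi})$, for any $\phi$ satisfying $\sum_{x\in X_t\backslash\{x_{t,1}\}}\phi(x)\geq a_t$. Specializing to $\phi\equiv\mathbf{1}$ handles complexes; $\phi\equiv\infty$ handles multicomplexes. This construction (involving the monomials $\widehat{m}_t$, $\widehat{p}$, $q$, $\widetilde{q}$, and a careful shadow count) is the substantive technical content of Section~\ref{sec:GenColoredMulticomplexes}, and it does not follow formally from Theorem~\ref{main-thm-MacLex}. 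Without a direct construction in $\overline{W}$ --- or at least a proved transfer principle --- the simplicial-complex case of $(\text{i})\Rightarrow(\text{ii})$ remains open in your proposal.
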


The rest of the paper is organized as follows. In Section \ref{sec:MacLexRings}, we fix our notation and study basic properties of Macaulay-Lex rings that we need. Sections \ref{sec:NonMacLexColoredQuotRings}--\ref{sec:ProofOfCruxResult} deal with the proof of Theorem \ref{main-thm-MacLex}. In Section \ref{sec:GenColoredMulticomplexes}, we study the $f$-vectors of generalized colored multicomplexes with arbitrarily prescribed maximum possible degrees of its variables, and prove Theorem \ref{main-thm-f-vectors-colored} as a special case of Theorem \ref{thm:revlexdoesntcharacterize}. Finally, in Section \ref{sec:FurtherRemarks}, we conclude our paper with further remarks.

\section{Macaulay-Lex rings}\label{sec:MacLexRings}
We begin by fixing our notation and terminology. Let $\mathbb{N}$ and $\mathbb{P}$ denote the non-negative integers and positive integers respectively, and for convenience, let $\overline{\mathbb{N}} = \mathbb{N} \cup \{\infty\}$, $\overline{\mathbb{P}} = \mathbb{P} \cup \{\infty\}$. For $n\in \mathbb{P}$ and ${\bf a} = (a_1, \dots, a_n)$, ${\bf b} = (b_1, \dots, b_n)$ in $\overline{\mathbb{N}}^n$, define $[n] := \{1, \dots, n\}$, let $|{\bf a}| := a_1 + \dots + a_n$, and write ${\bf a} \leq {\bf b}$ if $a_i \leq b_i$ for all $i\in [n]$. We use the convention that ${\bf a} < {\bf b}$ means ${\bf a} \leq {\bf b}$ and ${\bf a} \neq {\bf b}$, and we set $[0] := \emptyset$. For brevity, let $\boldsymbol{\delta}_n$, ${\bf 1}_n$ and $\boldsymbol{\infty}_n$ denote the $n$-tuples $(0,\dots, 0,1)$, $(1, \dots, 1)$ and $(\infty, \dots, \infty)$ respectively. [Note: $|\boldsymbol{\delta}_n| = 1$.]

Throughout this paper, $S:= \fieldk[x_1, \dots, x_n]$ is a standard graded polynomial ring on $n$ variables ($n\in \mathbb{P}$) over a field $\fieldk$, and we fix a linear order $x_1 > \dots > x_n$ on its variables. Given a subset $X \subseteq \{x_1, \dots, x_n\}$ and a monomial $m = x_1^{\alpha_1}\cdots x_n^{\alpha_n}$ in $S$, each integer $\alpha_i$ is called the {\it exponent} of $x_i$ in $m$, and we define $m_X := \prod_{x_i\in X} x_i^{\alpha_i}$. The {\it support} of $m$, denoted by $\supp(m)$, is the set of variables $\{x_i: \alpha_i \neq 0\}$. If $Y$ is a set of elements in $S$, write $\langle Y \rangle$ to mean the ideal generated by $Y$, and write $\Spank(Y)$ to mean the $\fieldk$-vector space spanned by $Y$. For any $\fieldk$-vector space $U \subseteq S$, write $\{U\}$ to mean the set of monomials in $U$.

Let $\Gamma$ be a set of monomials in $S$ of degree $d\in \mathbb{N}$. The {\it lex} (lexicographic) order $\leq_{\ell ex}$ on $\Gamma$ induced by the linear order $x_1 > \dots > x_n$ is given by $x_1^{\alpha_1}\cdots x_n^{\alpha_n} <_{\ell ex} x_1^{\beta_1}\cdots x_n^{\beta_n}$ if and only if $\alpha_i < \beta_i$ for the smallest $i\in [n]$ such that $\alpha_i \neq \beta_i$. Define the {\it revlex} (reverse-lexicographic) order $\leq_{r\ell}$ on $\Gamma$ by $m <_{r\ell} m^{\prime} \Leftrightarrow m >_{\ell ex} m^{\prime}$ for all $m, m^{\prime} \in \Gamma$. Note in particular that $x_1 <_{r\ell} \dots <_{r\ell} x_n$ is the reverse of the fixed linear order on the variables. Given any subset $\Gamma^{\prime} \subseteq \Gamma$, we say $\Gamma^{\prime}$ is a {\it lex-segment} (resp., {\it revlex-segment}) set in $\Gamma$ if $m\in \Gamma^{\prime} \Rightarrow m^{\prime} \in \Gamma^{\prime}$ for all $m^{\prime} >_{\ell ex} m$ (resp., $m^{\prime} >_{r\ell} m$) in $\Gamma$.

Suppose $R = \bigoplus_{d\in \mathbb{N}} R_d := S/M$ for some monomial ideal $M \subseteq S$. Without ambiguity, identify the monomials in $\{R\}$ with the monomials in $\{S\}\backslash\{M\}$ in the natural way, so that previously defined notions (lex order, lex-segment, etc.) make sense on subsets of $\{R_d\}$ for any $d\in \mathbb{N}$. An {\it $R_d$-monomial space} is a $\fieldk$-vector space $A$ spanned by some subset of the monomials in $\{R_d\}$, and we say $A$ is {\it lex-segment} (resp., {\it revlex-segment}) if it is spanned by a lex-segment (resp., revlex-segment) set in $\{R_d\}$. A {\it lex ideal} of $R$ is a (graded) monomial ideal $L = \bigoplus_{d\in \mathbb{N}} L_d$ of $R$ such that each $L_d$ is lex-segment. Given a graded $R$-module $T = \bigoplus_{d\in \mathbb{N}} T_d$, the {\it Hilbert function} of $T$ is the map $H(T,-): \mathbb{N} \to \mathbb{N}$ given by $d \mapsto \dim_{\fieldk} T_d$. Recall that we say $R$ is a {\it Macaulay-Lex ring} (or equivalently, $M$ is a {\it Macaulay-Lex ideal}) if for every graded ideal $I$ of $R$, there exists a lex ideal $L$ of $R$ such that $H(I,d) = H(L,d)$ for all $d\in \mathbb{N}$.

Given $d\in \mathbb{P}$ and a subset $\Gamma \subseteq \{R_d\}$, we say $\partial(\Gamma) := \big\{m\in \{R_{d-1}\}: m\text{ divides }m^{\prime}\text{ for some }m^{\prime} \in \Gamma\big\}$ is the {\it lower shadow} of $\Gamma$. If $A$ is an $R_d$-monomial space, then the {\it lower shadow} of $A$, denoted by $\partial(A)$, is the $R_{d-1}$-monomial space spanned by $\partial(\{A\})$, and the {\it upper shadow} of $A$ is the $R_{d+1}$-monomial space $R_1A$ spanned by the monomials of the form $x_im$, where $x_i \in R_1$ and $m\in \{A\}$. For convenience, let $\Lex_{R_d}(A)$ and $\Revlex_{R_d}(A)$ denote the unique lex-segment and revlex-segment $R_d$-monomial spaces of dimension $|A|$ respectively, where $|A| := \dim_{\fieldk} (A)$ is the dimension of $A$ as a $\fieldk$-vector space.

\begin{theorem}\label{thm:charMacLexRings}
The following are equivalent:
\begin{enum*}
\item $R$ is a Macaulay-Lex ring.\label{list:charMacLex-MacLex}
\item $|R_1(\Lex_{R_d}(A))| \leq |R_1A|$ for all $d\in \mathbb{N}$ and every $R_d$-monomial space $A$.\label{list:charMacLex-uppershad}
\item $\partial(\Revlex_{R_{d+1}}(A)) \subseteq \Revlex_{R_d}(\partial(A))$ for all $d\in \mathbb{N}$ and every $R_{d+1}$-monomial space $A$.\label{list:charMacLex-lowershadContain}
\item\label{list:charMacLex-lowershadComplete} For all $d\in \mathbb{N}$, the following two conditions hold:
\begin{enum*}
\item\label{list:charMacLex-lowershadComplete-ineq} $|\partial(\Revlex_{R_{d+1}}(A))| \leq |\partial(A)|$ for every $R_{d+1}$-monomial space $A$; and
\item\label{list:charMacLex-lowershadComplete-lowershad} the lower shadow of every revlex-segment $R_{d+1}$-monomial space is revlex-segment.
\end{enum*}
\end{enum*}
\end{theorem}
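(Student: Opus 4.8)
The plan is to establish \ref{list:charMacLex-MacLex}$\Leftrightarrow$\ref{list:charMacLex-lowershadContain}$\Leftrightarrow$\ref{list:charMacLex-lowershadComplete} via a complementation argument, and to insert \ref{list:charMacLex-uppershad} by proving \ref{list:charMacLex-MacLex}$\Rightarrow$\ref{list:charMacLex-uppershad}$\Rightarrow$\ref{list:charMacLex-MacLex}. Three preliminaries carry the argument. (a) \emph{Shadow adjunction}: for an $R_d$-monomial space $A$ and an $R_{d+1}$-monomial space $B$ one has $\{R_1A\}\cap\{B\}=\emptyset$ if and only if $\{A\}\cap\{\partial(B)\}=\emptyset$, since either side says that no monomial of $B$ is a variable times a monomial of $A$. (b) \emph{Complementation}: writing $A^{c}$ for the $R_d$-monomial space spanned by $\{R_d\}\setminus\{A\}$, the space $A$ is lex-segment if and only if $A^{c}$ is revlex-segment, so $\Lex_{R_d}(A)^{c}=\Revlex_{R_d}(A^{c})$; moreover the lex-segment (resp.\ revlex-segment) $R_d$-monomial spaces are totally ordered by inclusion according to dimension. (c) \emph{Shadow lemma}: in any $R=S/M$, the upper shadow $R_1A$ of a lex-segment $R_d$-monomial space $A$ is again lex-segment. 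Indeed, if $m'=x_i\ell\in\{R_1A\}$ with $\ell\in\{A\}$ and $m''\in\{R_{d+1}\}$ with $m''>_{\ell ex}m'$, factor $m''=x_j\ell''$ with $x_j$ the least variable (in the fixed order $x_1>\dots>x_n$) dividing $m''$; then $\ell''\in\{R_d\}$ because $M$ is an ideal, and since the operation ``divide by the least variable of the support'' is $\le_{\ell ex}$-monotone and, for a fixed monomial, dominates division by any single variable of it, we get $\ell\le_{\ell ex}\ell''$, hence $\ell''\in\{A\}$ and $m''=x_j\ell''\in\{R_1A\}$.

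The equivalence \ref{list:charMacLex-lowershadContain}$\Leftrightarrow$\ref{list:charMacLex-lowershadComplete} is bookkeeping with (b): \ref{list:charMacLex-lowershadContain} applied to an $A$ that is already revlex-segment forces $\partial(A)=\Revlex_{R_d}(\partial(A))$, i.e.\ \ref{list:charMacLex-lowershadComplete-lowershad}, and taking dimensions gives \ref{list:charMacLex-lowershadComplete-ineq}; conversely, \ref{list:charMacLex-lowershadComplete-lowershad} makes $\partial(\Revlex_{R_{d+1}}(A))$ revlex-segment, \ref{list:charMacLex-lowershadComplete-ineq} bounds its dimension by $|\partial(A)|$, and the total ordering from (b) yields the containment of \ref{list:charMacLex-lowershadContain}.

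For \ref{list:charMacLex-MacLex}$\Leftrightarrow$\ref{list:charMacLex-lowershadContain}, the main point is the dictionary, immediate from (a), between graded ideals $I$ of $R$ and families $(V_d)_{d\in\mathbb{N}}$ of $R_d$-monomial spaces with $\partial(V_{d+1})\subseteq V_d$ for all $d$, given by $V_d:=I_d^{c}$. A graded ideal $I$ admits a lex ideal with the same Hilbert function if and only if $\bigl(\Lex_{R_d}(I_d)\bigr)_d$ is itself an ideal, i.e.\ $R_1\Lex_{R_d}(I_d)\subseteq\Lex_{R_{d+1}}(I_{d+1})$ for all $d$, and by (a) and (b) this inclusion is equivalent to $\partial\bigl(\Revlex_{R_{d+1}}(V_{d+1})\bigr)\subseteq\Revlex_{R_d}(V_d)$. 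Hence \ref{list:charMacLex-MacLex} is equivalent to the statement that every family with $\partial(V_{d+1})\subseteq V_d$ satisfies $\partial(\Revlex_{R_{d+1}}(V_{d+1}))\subseteq\Revlex_{R_d}(V_d)$ for all $d$. This is implied by \ref{list:charMacLex-lowershadContain} because $|\partial(V_{d+1})|\le|V_d|$ gives $\Revlex_{R_d}(\partial(V_{d+1}))\subseteq\Revlex_{R_d}(V_d)$; conversely, applying it to the family $V_{d+1}=B$, $V_e=\partial(V_{e+1})$ for $e\le d$, $V_e=0$ for $e>d+1$, recovers the general instance $\partial(\Revlex_{R_{d+1}}(B))\subseteq\Revlex_{R_d}(\partial(B))$ of \ref{list:charMacLex-lowershadContain}. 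Finally, for \ref{list:charMacLex-MacLex}$\Rightarrow$\ref{list:charMacLex-uppershad}: the ideal generated by an $R_d$-monomial space $A$ has Hilbert function $(0,\dots,0,|A|,|R_1A|,|R_2A|,\dots)$, so a lex ideal with the same Hilbert function has degree-$d$ and degree-$(d+1)$ pieces $\Lex_{R_d}(A)$ and $\Lex_{R_{d+1}}(R_1A)$, and being an ideal it forces $R_1\Lex_{R_d}(A)\subseteq\Lex_{R_{d+1}}(R_1A)$, whence $|R_1\Lex_{R_d}(A)|\le|R_1A|$. For \ref{list:charMacLex-uppershad}$\Rightarrow$\ref{list:charMacLex-MacLex}: given a graded ideal $I$, set $L_d:=\Lex_{R_d}(I_d)$; by (c), $R_1L_d$ is lex-segment of dimension at most $|R_1I_d|\le|I_{d+1}|=|L_{d+1}|$ by \ref{list:charMacLex-uppershad}, hence $R_1L_d\subseteq L_{d+1}$, and $\bigoplus_d L_d$ is a lex ideal with the same Hilbert function as $I$.

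I expect the most delicate step to be \ref{list:charMacLex-MacLex}$\Leftrightarrow$\ref{list:charMacLex-lowershadContain}: setting up the ideal-to-family dictionary cleanly, and above all keeping track of which monomial spaces are lex-segment versus revlex-segment as one passes back and forth through the complementation, so that every inclusion and every dimension comparison remains valid. The shadow lemma (c) is the one genuinely computational ingredient, and it is precisely what makes the implication \ref{list:charMacLex-uppershad}$\Rightarrow$\ref{list:charMacLex-MacLex} go through.
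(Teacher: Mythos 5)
Your proposal supplies a self-contained proof where the paper gives none: the paper's ``proof'' of Theorem~\ref{thm:charMacLexRings} is a one-paragraph attribution to Macaulay, Clements--Lindstr\"om, Shakin, and Engel, with no argument at all. Your complementation/duality route is essentially the standard one underlying those references, and the individual steps check out: the shadow-adjunction (a) is correct; the complementation (b) and the total ordering of lex- and revlex-segment spaces of a fixed degree are correct; and the shadow lemma (c), with the observation that dividing by the smallest supporting variable is $\le_{\ell ex}$-monotone across monomials of equal degree and dominates division by any other variable of a fixed monomial, is a valid proof of \cite[Lem.~2.1]{Shakin2001:GeneralizationMacaulayThm}. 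The derivations \ref{list:charMacLex-lowershadContain}$\Leftrightarrow$\ref{list:charMacLex-lowershadComplete}, \ref{list:charMacLex-MacLex}$\Rightarrow$\ref{list:charMacLex-uppershad}, and the forward halves of the remaining equivalences are all sound.

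There is one gap, concentrated in the directions \ref{list:charMacLex-lowershadContain}$\Rightarrow$\ref{list:charMacLex-MacLex} and \ref{list:charMacLex-uppershad}$\Rightarrow$\ref{list:charMacLex-MacLex}. Your dictionary $I \leftrightarrow (V_d)$ via $V_d := I_d^c$ requires $I_d$ to be an $R_d$-monomial space, so that the complement in the monomial basis is meaningful; likewise, \ref{list:charMacLex-uppershad} is a statement about monomial spaces $A$, so it cannot be applied directly to the graded pieces $I_d$ of an arbitrary graded ideal $I$. The Macaulay-Lex property quantifies over all graded ideals of $R$, not just the monomial ones, so you need the standard reduction: pass from $I$ to its initial ideal $\inid(I)$ with respect to a term order, which is a monomial ideal of $R$ with the same Hilbert function (the paper invokes exactly this, via \cite[Thm.~15.26]{Eisenbud:CommutativeAlgebraBook}, in the proof of Lemma~\ref{lemma:MacLexMulticomplexCorresp}). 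Once a lex ideal with the Hilbert function of $\inid(I)$ is produced by your argument, it automatically has the Hilbert function of $I$. With this one sentence added, the proof is complete; without it, the ``$\Rightarrow$\ref{list:charMacLex-MacLex}'' implications only establish the Macaulay-Lex property for monomial ideals.
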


\begin{proof}
The equivalence \ref{list:charMacLex-MacLex} $\Leftrightarrow$ \ref{list:charMacLex-uppershad} is straightforward (see, e.g., \cite[Thm. 2.1.7]{Shakin2005:MonomialIdeals}) and was first proven by Macaulay~\cite{Macaulay1927} for the case $R = S$. The equivalence \ref{list:charMacLex-MacLex} $\Leftrightarrow$ \ref{list:charMacLex-lowershadContain} was first proven by Clements and Lindstr\"{o}m~\cite{ClementsLindstrom1969} for Clements-Lindstr\"{o}m rings, and a proof for arbitrary Macaulay-Lex rings was given by Shakin~\cite[Thm. 2.7]{Shakin2001:GeneralizationMacaulayThm}. Finally, Engel~\cite[Prop. 8.1.1]{book:EngelSpernerTheory} proved \ref{list:charMacLex-lowershadContain} $\Leftrightarrow$ \ref{list:charMacLex-lowershadComplete}.
\end{proof}

\begin{remark}
The converses to the two implications \ref{list:charMacLex-MacLex} $\Rightarrow$ \ref{list:charMacLex-lowershadComplete-ineq} and \ref{list:charMacLex-MacLex} $\Rightarrow$ \ref{list:charMacLex-lowershadComplete-lowershad} in Theorem \ref{thm:charMacLexRings} are not true. For example, as observed by Shakin~\cite[Example 2.8]{Shakin2001:GeneralizationMacaulayThm}, the quotient ring $R = \fieldk[x_1, x_2]/\langle x_1^3, x_1x_2^2, x_2^3 \rangle$ satisfies \ref{list:charMacLex-lowershadComplete-ineq}, yet this ring is not Macaulay-Lex. Also, we will later show that colored quotient rings satisfy \ref{list:charMacLex-lowershadComplete-lowershad} (see Proposition \ref{prop:ShadRevlexSegment}), yet by Theorem \ref{main-thm-MacLex}, not every colored quotient ring is Macaulay-Lex. In contrast, the upper shadow of every lex-segment $R_d$-monomial space (for all $d\in \mathbb{N}$) is lex-segment; see, e.g., \cite[Lem. 2.1]{Shakin2001:GeneralizationMacaulayThm} for a proof.
\end{remark}

Given an ideal $I$ of $S$, a {\it lex-plus-$I$ ideal} of $S$ is an ideal $L^{\prime}$ that can be written as $L^{\prime} = L + I$ for some lex ideal $L$ of $S$. In particular, if $P := \langle x_1^{a_1}, \dots, x_n^{a_n} \rangle$ for $2\leq a_1 \leq \dots \leq a_n \leq \infty$, then lex-plus-$P$ ideals are called {\it lex-plus-powers ideals}, and they were first introduced by Evans; see \cite{FranciscoRichert2007:LPPIdeals}.
\begin{theorem}\label{thm:MacLexProperties}
Let $M$ be a Macaulay-Lex ideal of $S$.
\begin{enum*}
\item\label{thm-part:lex-plus-M} Every lex-plus-$M$ ideal of $S$ is Macaulay-Lex. In particular, every lex-plus-powers ideal of $S$ is a Macaulay-Lex ideal.
\item\label{thm-part:minDegGenerator} If $M$ is non-zero and $d>0$ is the minimal degree of the generators of $M$, then there exists some $i\in [n]$ such that $x_1^{d-1}x_i \in M$.
\item\label{thm-part:truncatedMacLexIdeal} If $n\geq 2$ and $d\in \mathbb{N}$, then the ideal $\big\langle\big\{m\in \{\fieldk[x_2, \dots, x_n]\}: mx_1^d \in M\big\}\big\rangle$ contained in $\fieldk[x_2, \dots, x_n]$ is Macaulay-Lex.
\item\label{thm-part:extendNewVarMacLex} If $y$ is an indeterminate, then $(S[y])M$ as an ideal of the polynomial ring $S[y]$ is Macaulay-Lex with respect to the linear order $x_1 > \dots > x_n > y$.
\end{enum*}
\end{theorem}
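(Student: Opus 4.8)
My plan is to treat the four parts essentially independently, deducing (i), (iii) and (iv) from the upper–shadow criterion of Theorem~\ref{thm:charMacLexRings}\ref{list:charMacLex-uppershad} and obtaining (ii) by directly exhibiting a violation of it. For (i): a lex-plus-$M$ ideal $N = L_0 + M$ contains $M$, and because $\{(L_0)_d\}$ is a lex-segment in $\{S_d\}$, its restriction $\{N_d\}\setminus\{M_d\} = \{(L_0)_d\}\setminus\{M_d\}$ is a lex-segment in $\{R_d\}$, where $R := S/M$; hence $N/M$ is a lex ideal of $R$. So it suffices to show: if $R$ is Macaulay-Lex and $J$ is a lex ideal of $R$, then $R/J$ is Macaulay-Lex. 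Given a graded ideal of $R/J$, I would lift it to a graded ideal $I \supseteq J$ of $R$, use that $R$ is Macaulay-Lex to pick a lex ideal $L$ of $R$ with $H(L,d) = H(I,d)$ for all $d$, observe that $H(I,d) \ge H(J,d)$ forces $J_d \subseteq L_d$ since lex-segments of $\{R_d\}$ are nested by dimension, and conclude that $L/J$ is a lex ideal of $R/J$ with the required Hilbert function. The ``in particular'' then follows by taking $M = \langle x_1^{a_1},\dots,x_n^{a_n}\rangle$, which is Macaulay-Lex by the Clements--Lindstr\"om theorem.

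For (ii) I would argue by contradiction: suppose $x_1^{d-1}x_i \notin M$ for every $i \in [n]$. Since $d$ is the least degree of a generator, $M_{d-1} = 0$ while $M_d \ne 0$; choose a monomial $m_0 \in \{M_d\}$ and write $m_0 = x_j m$ with $\deg m = d-1$, so $A := \Spank(\{m\})$ is a one-dimensional $R_{d-1}$-monomial space. Then $\Lex_{R_{d-1}}(A) = \Spank(\{x_1^{d-1}\})$, whose upper shadow is $\Spank(\{x_1^{d-1}x_i : i \in [n]\})$ and hence has dimension exactly $n$ (all these monomials survive in $R$, by hypothesis), whereas $R_1 A$ misses the monomial $x_j m = m_0 \in M$ and so has dimension at most $n-1$. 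This violates Theorem~\ref{thm:charMacLexRings}\ref{list:charMacLex-uppershad}, contradicting that $M$ is Macaulay-Lex; hence some $x_1^{d-1}x_i$ lies in $M$.

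For (iii), put $S' := \fieldk[x_2,\dots,x_n]$, $R := S/M$, and $R' := S'/M'$ with $M'$ as in the statement. The map $m \mapsto x_1^d m$ identifies $\{R'_e\}$, as an ordered set, with the block $\mathcal X_e$ of those monomials of $\{R_{d+e}\}$ whose $x_1$-exponent equals $d$, and this block sits immediately below the top block $B_e$ of all monomials of $\{R_{d+e}\}$ with $x_1$-exponent at least $d+1$. Given an $R'_e$-monomial space $A'$ with $e \ge 1$ (the case $e = 0$ being trivial), I would apply Theorem~\ref{thm:charMacLexRings}\ref{list:charMacLex-uppershad} for the Macaulay-Lex ring $R$ to the $R_{d+e}$-monomial space $B := B_e \oplus x_1^d A'$. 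Since $B_e$ is a lex-segment, $\Lex_{R_{d+e}}(B) = B_e \oplus x_1^d\,\Lex_{R'_e}(A')$; and a short check shows $R_1 B_e$ is exactly the top block of $\{R_{d+e+1}\}$, which absorbs the ``$x_1^{d+1}\cdot$'' part of $R_1(x_1^d A')$, so that $|R_1 B| = |R_1 B_e| + |R'_1 A'|$, and likewise $|R_1\Lex_{R_{d+e}}(B)| = |R_1 B_e| + |R'_1\Lex_{R'_e}(A')|$. Cancelling $|R_1 B_e|$ from $|R_1\Lex_{R_{d+e}}(B)| \le |R_1 B|$ yields $|R'_1\Lex_{R'_e}(A')| \le |R'_1 A'|$, i.e.\ Theorem~\ref{thm:charMacLexRings}\ref{list:charMacLex-uppershad} for $R'$.

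For (iv), note $(S[y])M = MS[y]$, so the quotient is $(S/M)[y] = R[y]$ with $y$ the smallest variable. Decomposing an $(R[y])_k$-monomial space by $y$-degree as $\mathcal A = \bigoplus_{j=0}^k y^j A_j$, the $y$-degree-$j$ component of $(R[y])_1\mathcal A$ is $R_1 A_j \cup A_{j-1}$ inside $\{R_{k+1-j}\}$. First I would replace each $A_j$ by $\Lex_{R_{k-j}}(A_j)$: using that $R$ is Macaulay-Lex, that the upper shadow of a lex-segment is again a lex-segment, and that lex-segments of a fixed $\{R_e\}$ are nested, each such union is replaced by a union of two nested lex-segments of no larger total dimension, so the upper shadow does not grow. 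It then remains to pass from this ``layerwise-lex'' monomial space to the genuine lex-segment $\Lex_{(R[y])_k}(\mathcal A)$ of the same total dimension without increasing the upper shadow; I would do this by an exchange argument that shifts dimension between consecutive $y$-layers toward the coherent ($R[y]$-lex) configuration, and I expect this step to be the main obstacle. Alternatively, the fact that the Macaulay-Lex property is preserved under adjunction of a new smallest variable is known and may simply be cited.
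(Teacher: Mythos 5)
The paper itself establishes this theorem purely by citation (Shakin; Mermin--Peeva), so there is no in-text argument to compare against. Taken on its own terms, your proposal is correct for parts (i)--(iii). In (i), the nestedness of lex-segments of $\{R_d\}$ by dimension makes the reduction work: lifting $\bar I$ to $I\supseteq J$, taking a lex ideal $L$ with $H(L,d)=H(I,d)\ge H(J,d)$, nestedness forces $J_d\subseteq L_d$, and $L/J$ is then a lex ideal of $R/J$ with the right Hilbert function. In (ii), the one-dimensional witness $A=\Spank(\{m\})$ violates Theorem~\ref{thm:charMacLexRings}\ref{list:charMacLex-uppershad} because $x_1^{d-1}x_1,\dots,x_1^{d-1}x_n$ are $n$ distinct monomials surviving in $R_d$ by hypothesis, while $x_jm=m_0\in M$. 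In (iii), the block decomposition with the lex-segment $B_e$ and the identity $R_1 B_e = B_{e+1}$ for $e\ge 1$ correctly transfer the upper-shadow criterion from $R$ to $R'$.

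Part (iv), however, is not proved, and you acknowledge this yourself. Your layerwise lexification step is fine: the $y$-degree-$j$ component of the new upper shadow is a union of two nested lex-segments in $\{R_{k+1-j}\}$, so its dimension is $\max\{|R_1\Lex_{R_{k-j}}(A_j)|,\,|A_{j-1}|\}$, which does not exceed $|R_1 A_j\cup A_{j-1}|$. But passing from this layerwise-lex configuration to the global $\Lex_{(R[y])_k}(\mathcal{A})$ is not a routine exchange. The lex order on $\{(R[y])_k\}$ with $x_1>\dots>x_n>y$ compares $x$-exponents first and the $y$-exponent last, so a global lex-segment intersects the $y$-layers in lex-segments whose sizes are coupled and can jump discontinuously as the boundary monomial moves from one layer to another; controlling the total upper shadow $\sum_j|R_1A_j\cup A_{j-1}|$ while shifting dimension between layers toward the coherent configuration requires a genuine induction or compression argument, and that is precisely the content of the cited theorem (Shakin, Thm.\ 2.5.1; Mermin--Peeva, Thm.\ 4.1). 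Citing that result, as you suggest as a fallback, is legitimate --- the paper does exactly that --- but as written your sketch does not constitute a proof of (iv).
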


\begin{proof}
Statement \ref{thm-part:lex-plus-M} was independently proven by Shakin~\cite[Thm. 2.7.2]{Shakin2005:MonomialIdeals} and Mermin-Peeva~\cite[Thm. 5.1]{MerminPeeva2006:LexifyingIdeals}. Statements \ref{thm-part:minDegGenerator} and \ref{thm-part:truncatedMacLexIdeal} were first proven by Shakin; see \cite[Thm. 2.2.4]{Shakin2005:MonomialIdeals} and \cite[Prop. 2.5.2]{Shakin2005:MonomialIdeals} respectively. Statement \ref{thm-part:extendNewVarMacLex} was independently proven by Shakin~\cite[Thm. 2.5.1]{Shakin2005:MonomialIdeals} and Mermin-Peeva~\cite[Thm. 4.1]{MerminPeeva2006:LexifyingIdeals}.
\end{proof}

In this paper, we always reserve the notation $W = \bigoplus_{d\in \mathbb{N}} W_d$ to mean a colored quotient ring of type ${\bf a} \in \overline{\mathbb{P}}^n$ and composition $\boldsymbol{\lambda} \in \mathbb{P}^n$ (as defined in Section \ref{sec:Intro}). In particular, $S \cong \fieldk[X_{{\bf 1}_n}]/Q_{\boldsymbol{\infty}_n}$ is a colored quotient ring of type ${\bf a} = \boldsymbol{\infty}_n$ and composition $\boldsymbol{\lambda} = {\bf 1}_n$. Recall that the lower shadow of an arbitrary revlex-segment $R_{d+1}$-monomial space is not necessarily revlex-segment. Nevertheless, we have the following:

\begin{proposition}\label{prop:ShadRevlexSegment}
Let $d\in \mathbb{N}$. If $A$ is a revlex-segment $W_{d+1}$-monomial space, then $\partial(A)$ is revlex-segment.
\end{proposition}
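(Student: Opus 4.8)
The plan is to work directly with the combinatorial description of the revlex order on monomials of a fixed degree in the colored quotient ring $W$. Recall that $\{W_{d+1}\}$ consists of those degree-$(d+1)$ monomials $m = \prod_{i,j} x_{i,j}^{\alpha_{i,j}}$ in $\fieldk[X_{\boldsymbol\lambda}]$ such that, for each color $i$, the total exponent $\sum_{j} \alpha_{i,j}$ (i.e.\ $\deg(m_{X_i})$) is at most $a_i$; the ideal $Q_{\bf a}$ precisely kills monomials violating one of these color-degree bounds. Since $A$ is a revlex-segment $W_{d+1}$-monomial space, it is determined by a single monomial: there is $m^\ast \in \{W_{d+1}\}$ with $\{A\} = \{m \in \{W_{d+1}\} : m \geq_{r\ell} m^\ast\}$. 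I want to show $\partial(\{A\})$ is a revlex-segment set in $\{W_d\}$, i.e.\ that if $u \in \partial(\{A\})$ and $u' >_{r\ell} u$ with $u' \in \{W_d\}$, then $u' \in \partial(\{A\})$.

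The key step is a standard "canonical divisor" argument adapted to the colored setting. Given $u \in \partial(\{A\})$, among all $m \in \{A\}$ that are divisible by $u$, I would pick the revlex-largest one, call it $m_u$; one expects $m_u = x_{\iota,1}\, u$ where $x_{\iota,1}$ is a carefully chosen variable — intuitively the variable that is largest in the fixed linear order (hence smallest in revlex) among those we may legally multiply $u$ by while staying inside $\{W_{d+1}\}$ and while the product remains $\geq_{r\ell} m^\ast$. Concretely: multiplying $u$ by a variable $x_{i,j}$ increases $u$ in revlex exactly when $x_{i,j}$ is small in the fixed linear order, so to get the revlex-\emph{largest} element of $\{A\}$ above $u$ one multiplies by the revlex-\emph{smallest} admissible variable, subject to the color bound $\deg(u_{X_i}) < a_i$. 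Now take $u' >_{r\ell} u$ in $\{W_d\}$. Write $u'$ and $u$ in terms of their exponent vectors and compare: since $u' >_{r\ell} u$, the exponent vectors differ, and at the first coordinate (in the fixed order) where they differ, $u$ has the larger exponent. I would then produce an explicit monomial $m' \in \{W_{d+1}\}$ divisible by $u'$ with $m' \geq_{r\ell} m_u \geq_{r\ell} m^\ast$, by transferring the "extra degree" of $m_u$ over $u$ onto $u'$ in a way that does not violate any color bound. The cleanest route is: since $u' >_{r\ell} u$, one can check $x \cdot u' \geq_{r\ell} x \cdot u$ for the appropriate variable $x$ used to build $m_u$, provided that variable is still legal for $u'$; and legality is inherited because passing from $u$ to the revlex-larger $u'$ only moves exponent mass toward smaller variables (in the fixed order), which, within a single color, does not increase $\deg(\,\cdot\,_{X_i})$ beyond what it was. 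Handling the case where the "first difference" between $u$ and $u'$ occurs in a color $i$ with $\deg(u_{X_i}) = a_i$ already saturated is the place requiring the most care.

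The main obstacle I anticipate is precisely this bookkeeping across colors: the color bounds $\deg(m_{X_i}) \leq a_i$ interact with the revlex order in a nonuniform way, because within a color $i$ the variables $x_{i,1} > \dots > x_{i,\lambda_i}$ are ordered in a manner that is "backwards" relative to how colors are ordered at a fixed last index. Thus when I move from $u$ to a revlex-larger $u'$, I must verify that the total degree in each color either stays the same or is rearranged within that color, never pushed above $a_i$ — and that the canonical "completion" variable I used to build $m_u$ from $u$ remains available for $u'$. I expect to reduce everything to the single-variable-swap case (change $u$ to $u'$ by one elementary revlex-increasing exchange of exponent between two consecutive variables in the fixed order) and then check the finitely many sub-cases depending on whether the two variables lie in the same color or different colors, and whether the relevant color is saturated. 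Once the single-swap case is settled, iterating it connects any $u$ to any revlex-larger $u'$ in $\{W_d\}$ and completes the proof.
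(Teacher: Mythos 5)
Your overall framing matches the paper's: both pick, for each monomial in the shadow, the revlex-smallest extension inside $A$, and then try to show that any revlex-larger degree-$d$ monomial admits an extension at least that large. But you never close the one case that actually carries the proof, and you have an internal inconsistency in setting up the canonical extension. You announce that $m_u$ should be the revlex-\emph{largest} element of $A$ divisible by $u$, and then describe a construction (multiply by the variable that is largest in the fixed order, i.e.\ revlex-smallest) that produces the revlex-\emph{smallest} such element. These are opposite choices, and the distinction matters: only the smallest choice gives the weakest threshold $m' \geq_{r\ell} m_u$ to beat. The paper makes the ``smallest'' choice explicitly (``choose the largest possible $t$''), so if you intend the construction rather than the announcement, you are aligned with the paper; but as written the proposal is ambiguous.

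The substantive gap is the step you yourself flag as ``requiring the most care'' and then do not carry out. Write $u' >_{r\ell} u$ and try $m' = x\,u'$ with the same canonical $x$ used for $m_u = x\,u$. When $x\,u' = 0$ in $W$ — which happens precisely when passing from $u$ to $u'$ saturated the color of $x$ — you must replace $x$ by another variable $x'$ with $x'u' \neq 0$ and still prove $x'u' \geq_{r\ell} m_u$; but $x'$ is then revlex-smaller than $x$, so it is not at all automatic that $x'u' \geq_{r\ell} x\,u$ even though $u' >_{r\ell} u$. The two effects pull in opposite directions, and resolving the tension is exactly what the paper's proof does: it picks the \emph{smallest} legal replacement $y_j$ and proves $u\,y_j \geq_{r\ell} m\,y_t$ by a contradiction argument that genuinely uses the color structure (it compares exponent vectors position by position, uses that $u\,y_i = 0$ for all $i < j$ to conclude $u$ is saturated in the corresponding colors, and derives a contradiction with the minimality of $j$). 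Nothing in your sketch produces this argument, and it is not something that can be waved through ``checking finitely many sub-cases''; the paper's contradiction argument occupies the bulk of its proof. Your secondary idea of reducing to single exponent swaps is also not established — covering relations in $\{W_d\}$ under revlex are not single swaps, and even if one can connect $u$ to $u'$ by swaps inside $\{W_d\}$, each swap between distinct colors can saturate a color and so reintroduces exactly the hard case. In short, you have correctly located the difficulty, but the proposal as written does not contain the argument needed to overcome it.
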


\begin{proof}
For convenience, relabel the variables in $X_{\boldsymbol{\lambda}}$ by $y_1 < y_2 < y_3 < \dots$, so that the linear order on $X_{\boldsymbol{\lambda}}$ is preserved, i.e. $y_1 = x_{n,1}$, $y_2 = x_{n-1,1}$, $y_3 = x_{n-2,1}$, etc. Suppose $m$ is a monomial in $\partial(A)$. Then there is a $y_t$ such that $my_t$ is a monomial in $A$. Without loss of generality, choose the largest possible $t$ (i.e. the smallest possible $y_t$ in the revlex order). Next, choose an arbitrary monomial $u$ in $W_d$ such that $u >_{r\ell} m$, and write $m = y_1^{\alpha_1}\cdots y_N^{\alpha_N}$, $u = y_1^{\beta_1}\cdots y_N^{\beta_N}$, where $N = \max\{i: y_i\text{ divides }m\text{ or }y_i\text{ divides }u\}$.

To show that $\partial(A)$ is revlex-segment, it suffices to show that $u \in \partial(A)$. If there is some $i\in [t]$ such that $uy_i \neq 0$, then $y_i \geq_{r\ell} y_t$ and $u >_{r\ell} m$ together yield $uy_i >_{r\ell} my_t$, so since $A$ is revlex-segment, we get $uy_i \in A$, which implies $u\in \partial(A)$, and we are done. Thus, we can assume every $i\in [t]$ satisfies $uy_i = 0$.

Note that $my_t \neq 0$ yields $\deg(my_t) \leq |{\bf a}|$, hence $\deg(u) = \deg(m) \leq |{\bf a}| - 1$, and $uy_j \neq 0$ for some $j>t$. Choose the smallest possible $j$, so that $uy_i = 0$ for all $i\in [j-1]$. Note that $j\leq n$, since otherwise $uy_i = 0$ for all $i\in [n]$, which implies $\deg(u_{X_i}) = a_i < \infty$ for all $i\in [n]$. This would force $\deg(m) = \deg(u) = |{\bf a}|$ and $mx = 0$ for all $x\in X_{\boldsymbol{\lambda}}$, which contradicts the existence of $y_t$. Also, notice that if we can show $uy_j \geq_{r\ell} my_t$, then since $A$ is revlex-segment, we get $uy_j\in A$, i.e. $u\in \partial(A)$, and we are done.

Suppose instead $uy_j <_{r\ell} my_t$. Let $\ell\in [N]$ be the largest integer such that $\alpha_{\ell} \neq \beta_{\ell}$, and note that $u >_{r\ell} m$ implies $\beta_{\ell} < \alpha_{\ell}$. By the definition of $\ell$, it follows from $uy_j <_{r\ell} my_t$ and $j>t$ that $j\geq \ell$. If $j> \ell$, then we get $\ell\in [n]$, so since $\beta_{\ell} < \alpha_{\ell}$, the definition of $\ell$ implies $\deg(u_{X_{n+1-\ell}}) < \deg(m_{X_{n+1-\ell}})$, thus $uy_{\ell} \neq 0$, which contradicts the minimality of $j$. Consequently, $j = \ell$. This forces $\beta_{\ell} = \alpha_{\ell} - 1$, since otherwise $\beta_{\ell} < \alpha_{\ell} - 1$ would imply the contradiction $uy_j = uy_{\ell} >_{r\ell} my_t$ that follows from $t<j=\ell$ and the definition of $\ell$.

Now, $uy_{\ell} = uy_j <_{r\ell} my_t$, and the exponents of $y_{\ell}$ in both $uy_{\ell}$ and $my_t$ are equal, thus there is some $\ell^{\prime} \in [\ell-1]$ such that the exponent of $y_{\ell^{\prime}}$ in $uy_{\ell}$ is strictly greater than the exponent of $y_{\ell^{\prime}}$ in $my_t$. In particular, this means $\alpha_{\ell^{\prime}} < \beta_{\ell^{\prime}}$. Note that $uy_i = 0$ for all $i\in [\ell-1]$, so it follows from $\ell \leq n$ and the definition of $W$ that $\deg(u_{X_{n+1-i}}) = a_{n+1-i} < \infty$ for all $i\in [\ell-1]$, thus $\alpha_i \leq \beta_i$ for all $i\in [\ell-1]$. Since $\deg(uy_{\ell}) = \deg(my_t)$ and $\beta_{\ell} = \alpha_{\ell}-1$, we get $\sum_{i\in [\ell-1]} \beta_i = \big(\sum_{i\in [\ell-1]} \alpha_i\big)+1$, so $\alpha_{\ell^{\prime}} < \beta_{\ell^{\prime}}$ implies $\alpha_i = \beta_i$ for all $i\in [\ell-1]\backslash\{\ell^{\prime}\}$, and $\beta_{\ell^{\prime}} = \alpha_{\ell^{\prime}}-1$. Note that $\ell^{\prime} \neq t$, since otherwise we would get $uy_{\ell} = my_t$. Thus $\deg(u_{X_{n+1-\ell^{\prime}}}) < \deg(m_{X_{n+1-\ell^{\prime}}})$, which implies $uy_{\ell^{\prime}} \neq 0$, therefore contradicting the minimality of $j$.
\end{proof}

By combining with Theorem \ref{thm:charMacLexRings}, we get the following useful corollary.

\begin{corollary}\label{cor:coloredMacLexChar}
$W$ is Macaulay-Lex if and only if $|\partial(\Revlex_{W_{d+1}}(A))| \leq |\partial(A)|$ for all $d\in \mathbb{N}$ and every $W_{d+1}$-monomial space $A$.
\end{corollary}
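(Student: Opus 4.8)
The plan is to read off the corollary directly from Theorem~\ref{thm:charMacLexRings} together with Proposition~\ref{prop:ShadRevlexSegment}, with essentially no new computation. Recall that Theorem~\ref{thm:charMacLexRings} asserts the equivalence \ref{list:charMacLex-MacLex} $\Leftrightarrow$ \ref{list:charMacLex-lowershadComplete}: a quotient ring $R = S/M$ is Macaulay-Lex if and only if, for every $d \in \mathbb{N}$, both \ref{list:charMacLex-lowershadComplete-ineq} $|\partial(\Revlex_{R_{d+1}}(A))| \leq |\partial(A)|$ holds for every $R_{d+1}$-monomial space $A$, and \ref{list:charMacLex-lowershadComplete-lowershad} the lower shadow of every revlex-segment $R_{d+1}$-monomial space is again revlex-segment. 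So I would begin by invoking this equivalence for the particular ring $R = W$.

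The next step is to observe that Proposition~\ref{prop:ShadRevlexSegment} is precisely the statement that clause \ref{list:charMacLex-lowershadComplete-lowershad} holds automatically when $R = W$ is a colored quotient ring, for every $d \in \mathbb{N}$. Consequently the conjunction appearing in condition \ref{list:charMacLex-lowershadComplete} collapses, for colored quotient rings, to its first clause \ref{list:charMacLex-lowershadComplete-ineq} alone, namely the inequality $|\partial(\Revlex_{W_{d+1}}(A))| \leq |\partial(A)|$ for all $d$ and all $A$. Chaining this with \ref{list:charMacLex-MacLex} $\Leftrightarrow$ \ref{list:charMacLex-lowershadComplete} yields the desired biconditional.

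There is no genuine obstacle at this stage: all the substance — establishing clause \ref{list:charMacLex-lowershadComplete-lowershad} for colored quotient rings — is already contained in Proposition~\ref{prop:ShadRevlexSegment}, and the equivalence of Macaulay-Lexness with condition \ref{list:charMacLex-lowershadComplete} is the cited result of Engel. For the record I would note the asymmetry in how the two hypotheses are used: the forward direction (Macaulay-Lex $\Rightarrow$ the inequality) is immediate from \ref{list:charMacLex-MacLex} $\Rightarrow$ \ref{list:charMacLex-lowershadComplete-ineq} and needs neither Proposition~\ref{prop:ShadRevlexSegment} nor Engel's theorem, whereas Proposition~\ref{prop:ShadRevlexSegment} is exactly what is needed to upgrade the inequality back to the full Macaulay-Lex property.
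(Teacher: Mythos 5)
Your proposal is correct and matches the paper's argument exactly: the paper obtains the corollary by combining Theorem~\ref{thm:charMacLexRings} (specifically the equivalence of~\ref{list:charMacLex-MacLex} with~\ref{list:charMacLex-lowershadComplete}) with Proposition~\ref{prop:ShadRevlexSegment}, which supplies clause~\ref{list:charMacLex-lowershadComplete-lowershad} for free in the colored-quotient setting. Your observation about the asymmetry in which direction actually needs Proposition~\ref{prop:ShadRevlexSegment} is a nice touch but is the same argument.
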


\section{Non-Macaulay-Lex colored quotient rings}\label{sec:NonMacLexColoredQuotRings}
Let $W$ be a colored quotient ring of type ${\bf a} = (a_1, \dots, a_n)$ and composition $\boldsymbol{\lambda} = (\lambda_1, \dots, \lambda_n)$. If there exists some integer $r$ satisfying $0\leq r\leq n$ such that $a_i = 1$ for all $i\in [r]$, $\lambda_i = 1$ for all $i\in [n]\backslash [r]$, and $a_{r+1}\leq \dots \leq a_n$, then we say $W$ is {\it mixed}. If $a_1\leq \dots \leq a_n$ and $\lambda_i = 1$ for all $i\neq 1$, then we say $W$ is {\it hinged}. Note that a Clements-Lindstr\"{o}m ring is both mixed and hinged, while a colored squarefree ring is mixed. Theorem \ref{main-thm-MacLex} thus asserts that a colored quotient ring is Macaulay-Lex if and only if it is either mixed or hinged. Our main result for this section is to show that if $W$ is Macaulay-Lex, then it must be either mixed or hinged. We start by proving a useful necessary condition for $W$ to be Macaulay-Lex.

\begin{proposition}\label{prop:necessaryCondOnType}
If $W$ is Macaulay-Lex, then $a_1 \leq \dots \leq a_n$.
\end{proposition}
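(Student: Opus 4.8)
The plan is to prove the contrapositive: assuming $a_k > a_{k+1}$ for some $k \in [n-1]$, I will exhibit a $W_{d+1}$-monomial space $A$ for which $|\partial(\Revlex_{W_{d+1}}(A))| > |\partial(A)|$, so that $W$ fails the criterion of Corollary~\ref{cor:coloredMacLexChar} and hence is not Macaulay-Lex. The intuition is that in the revlex order, the ``large'' variables are $y_1 = x_{n,1}, y_2 = x_{n-1,1}, \dots$, i.e. the variables of the \emph{last} colors come first; a revlex-segment therefore wants to pile exponent onto colors with large index. If the exponent caps $a_i$ are not weakly increasing, then at some degree a revlex-segment is forced to ``spill over'' onto a color it cannot use much of, producing an inefficiently large lower shadow.

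Concretely, I would work near the color indices $k$ and $k+1$ and reduce to a two-variable-per-color (in fact essentially two-variable) situation by choosing the degree $d$ so that all colors other than $k,k+1$ are already saturated in the monomials under consideration, or by invoking the localization/truncation machinery of Theorem~\ref{thm:MacLexProperties}\ref{thm-part:truncatedMacLexIdeal} and \ref{thm-part:extendNewVarMacLex} to pass to a quotient ring where only the two offending colors matter. In that reduced setting the obstruction should already appear in the Clements--Lindstr\"om world: it is classical (and implicit in \cite{ClementsLindstrom1969}) that $\fieldk[u,v]/\langle u^{a}, v^{b}\rangle$ with $a > b$ is \emph{not} Macaulay-Lex, the standard witness being a small monomial space in a degree where the revlex-segment is forced to use the scarce variable $v$ while a non-revlex configuration of the same size has a smaller shadow. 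I would pick an explicit degree $d$ around $a_k$ (so that the revlex-segment of the relevant size must contain a monomial divisible by $y^{a_{k+1}}$ in the ``$k+1$'' slot and cannot be extended, whereas shifting mass to the ``$k$'' slot is still allowed) and compute both shadows directly; the numerics are a finite check once $d$, the target dimension, and the two caps are fixed.

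The main obstacle is not any single inequality but the bookkeeping of the reduction: I must make sure the small two-color counterexample genuinely embeds as a monomial space of the full ring $W$ with its prescribed composition $\boldsymbol{\lambda}$ and its fixed linear order, i.e. that the extra variables $x_{i,j}$ (both within colors $k,k+1$ when $\lambda_k$ or $\lambda_{k+1}>1$, and in all other colors) do not accidentally rescue the inequality. Using Theorem~\ref{thm:MacLexProperties}\ref{thm-part:truncatedMacLexIdeal} to strip away colors $i < k$ by specializing $x_{1,1}$ (the top variable) to a high power, and part~\ref{thm-part:extendNewVarMacLex} to see that adjoining the remaining low-index variables cannot destroy a failure of the Macaulay-Lex property, should let me quarantine the two colors $k$ and $k+1$; the within-color extra variables are harmless because in the revlex order they are interchangeable with $x_{k,1}$ (resp.\ $x_{k+1,1}$) up to relabeling and the shadow counts only depend on total exponents per color below the caps. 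Once the setup is clean, the explicit degree-by-degree count finishes it, and I would state the conclusion $a_1 \le \dots \le a_n$ as the contrapositive of the exhibited obstruction.
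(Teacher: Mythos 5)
The overall strategy---reduce to a small Clements--Lindstr\"{o}m-type ring on the colors $t, t+1, \dots, n$ where the exponent caps are out of order, and observe that such a ring cannot be Macaulay-Lex---is exactly the shape of the paper's argument, so you have the right idea. However, the reduction mechanism you propose has two genuine problems. First, Theorem~\ref{thm:MacLexProperties}\ref{thm-part:truncatedMacLexIdeal} strips away the \emph{top variable in the linear order}, not a whole color: the largest variable of $X_{\boldsymbol{\lambda}}$ is not $x_{1,1}$ in general (it is the unique $x_{i,j}$ with maximal $j$, then minimal $i$), and repeatedly peeling one variable at a time does not cleanly delete the colors $i<t$ while retaining all of colors $t,\dots,n$ with their original exponent caps. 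Second, Theorem~\ref{thm:MacLexProperties}\ref{thm-part:extendNewVarMacLex} asserts that \emph{if $M$ is Macaulay-Lex then $(S[y])M$ is Macaulay-Lex}; the direction you need (``adjoining variables cannot destroy a failure of Macaulay-Lex'') is the \emph{converse}, which is not what the statement gives you, so you cannot invoke it to push a small counterexample up to $W$.

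The clean fix---and what the paper does---is to use Theorem~\ref{thm:MacLexProperties}\ref{thm-part:lex-plus-M} instead. The set of variables greater than $x_{t,1}$ in the fixed linear order is a lex-segment of $X_{\boldsymbol{\lambda}}$, so $L:=\langle\{x\in X_{\boldsymbol{\lambda}}: x>x_{t,1}\}\rangle$ is a lex ideal; if $Q_{\bf a}$ were Macaulay-Lex, then $Q_{\bf a}+L$ would be too, and $\fieldk[X_{\boldsymbol{\lambda}}]/(Q_{\bf a}+L)\cong\fieldk[x_{t,1},\dots,x_{n,1}]/\langle x_{t,1}^{a_t+1},\dots,x_{n,1}^{a_n+1}\rangle$ would be a Macaulay-Lex Clements--Lindstr\"{o}m-type ring with out-of-order exponents. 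At that point, rather than appealing to a ``classical'' explicit counterexample (which is not really stated in \cite{ClementsLindstrom1969}; it is essentially Shakin's observation, cf.\ Remark~\ref{remark:caseCompositionAllOnes}), the paper applies Theorem~\ref{thm:MacLexProperties}\ref{thm-part:minDegGenerator}: the minimal generator degree of $\langle x_{t,1}^{a_t+1},\dots,x_{n,1}^{a_n+1}\rangle$ is at most $a_{t+1}+1\le a_t$, yet no monomial $x_{t,1}^{d-1}x_{i,1}$ of that degree lies in the ideal, a contradiction. You should replace the (iii)/(iv) reduction with the lex-plus-$M$ quotient and, if you still prefer an explicit small counterexample to invoking \ref{thm-part:minDegGenerator}, be aware that you must actually exhibit it (or cite Shakin), since it is not literally in \cite{ClementsLindstrom1969}.
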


\begin{proof}
Assume $n>1$, and suppose on the contrary that $a_t > a_{t+1}$ for some $t\in [n-1]$. Define the lex ideal $L := \langle \{x\in X_{\boldsymbol{\lambda}}: x > x_{t,1}\} \rangle \subseteq \fieldk[X_{\boldsymbol{\lambda}}]$, and note that $Q_{{\bf a}}$ is Macaulay-Lex by assumption, hence by Theorem \ref{thm:MacLexProperties}\ref{thm-part:lex-plus-M}, the ideal $M := Q_{{\bf a}} + L$ is a Macaulay-Lex ideal of $\fieldk[X_{\boldsymbol{\lambda}}]$. Since
\begin{equation*}
\fieldk[X_{\boldsymbol{\lambda}}]/M \cong \fieldk[x_{t,1}, x_{(t+1),1}, \dots, x_{n,1}]/\langle x_{t,1}^{a_t+1}, x_{(t+1),1}^{a_{(t+1)}+1}, \dots, x_{n,1}^{a_n+1}\rangle,
\end{equation*}
it follows that $\langle x_{t,1}^{a_t+1}, x_{(t+1),1}^{a_{(t+1)}+1}, \dots, x_{n,1}^{a_n+1}\rangle$ is a Macaulay-Lex ideal of $\fieldk[x_{t,1}, x_{(t+1),1}, \dots, x_{n,1}]$, whose generators have minimal degree $\leq a_t$, which then contradicts Theorem \ref{thm:MacLexProperties}\ref{thm-part:minDegGenerator}.
\end{proof}

\begin{remark}\label{remark:caseCompositionAllOnes}
Proposition \ref{prop:necessaryCondOnType} was previously known for the case $\boldsymbol{\lambda} = {\bf 1}_n$; see \cite[Example 2.7.4]{Shakin2005:MonomialIdeals}. By combining with the Clements-Lindstr\"{o}m theorem~\cite{ClementsLindstrom1969}, we conclude that a colored quotient ring of composition ${\bf 1}_n$ is Macaulay-Lex if and only if $a_1 \leq \dots \leq a_n$.
\end{remark}

\begin{theorem}\label{thm:MacLex=>MixedOrHinged}
If $W$ is Macaulay-Lex, then $W$ is either mixed or hinged.
\end{theorem}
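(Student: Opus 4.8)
The plan is to assume $W$ is Macaulay-Lex and show that if it fails to be hinged, it must be mixed. By Proposition \ref{prop:necessaryCondOnType} we already know $a_1 \leq \dots \leq a_n$, so the only obstruction to being hinged is having $\lambda_i \geq 2$ for some $i \neq 1$; and the only obstruction to being mixed is, after letting $r$ be the largest index with $a_1 = \dots = a_r = 1$ (so $a_i \geq 2$ for $i > r$), finding some $i > r$ with $\lambda_i \geq 2$. So the crux is: assuming $W$ is Macaulay-Lex, prove that whenever $\lambda_i \geq 2$ for some $i > 1$, we must have $a_1 = \dots = a_{i-1} = 1$ (which forces $W$ mixed unless $i = 1$, handled trivially) — or else $\lambda_2 = \dots = \lambda_n = 1$ (i.e.\ $W$ hinged). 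Equivalently, I will rule out the "bad" configuration: there exist indices $s < i$ with $a_s \geq 2$ and $\lambda_i \geq 2$.

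The main tool is the machinery from Theorem \ref{thm:MacLexProperties}: since $Q_{\bf a}$ is Macaulay-Lex, so is $Q_{\bf a} + L$ for any lex ideal $L$, and quotienting by a lex-plus-$Q_{\bf a}$ ideal of the form $\langle\{x \in X_{\boldsymbol\lambda} : x > x_{s,1}\}\rangle + (\text{upper variables})$ allows me to pass to a small subquotient ring on just a handful of variables — mimicking the reduction already used in the proof of Proposition \ref{prop:necessaryCondOnType}. Concretely, in the bad configuration I would kill all variables except, say, $x_{s,1}$, $x_{i,1}$, $x_{i,2}$ (and possibly $x_{s,2}$ if $\lambda_s \geq 2$, though I can also kill that), using a lex ideal $L$ generated by an appropriate lex-segment of variables; Theorem \ref{thm:MacLexProperties}\ref{thm-part:lex-plus-M} guarantees the quotient remains Macaulay-Lex. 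This leaves a colored quotient ring on at most three variables whose parameters still exhibit $a_s \geq 2$ on a color-class that is "below" a color-class of size $\geq 2$ in the linear order. I then compute, in this concrete small ring, a Hilbert function (of some graded ideal, or equivalently via Corollary \ref{cor:coloredMacLexChar}, a monomial space whose revlex lower shadow is too large) that no lex ideal can realize, giving the contradiction. Part \ref{thm-part:truncatedMacLexIdeal} of Theorem \ref{thm:MacLexProperties} may also be useful to strip off a power of the largest variable and reduce the degree bookkeeping.

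The step I expect to be the main obstacle is the explicit small-ring computation: choosing the right monomial space $A$ in the three-variable colored quotient ring so that $|\partial(\Revlex(A))| > |\partial(A)|$, and verifying this inequality cleanly. The linear order on $X_{\boldsymbol\lambda}$ in Definition \ref{defn-coloredQuot} is slightly unusual ($x_{i,j} > x_{i',j'}$ if $j > j'$, or $j = j'$ and $i < i'$), so the interaction between the revlex order and which monomials survive in $W$ (those with $\deg(m_{X_i}) \leq a_i$) must be tracked carefully; getting the indices right here is where errors would creep in. I would first do the cleanest case $a_s = 2$, $\lambda_i = 2$, $s = 1$, $i = 2$, with everything else minimal, find the offending $A$ there by hand, and then argue that the general bad configuration reduces to (a ring containing) this one via the lex-plus-$Q_{\bf a}$ reductions above, so that non-Macaulay-Lex-ness propagates back up.
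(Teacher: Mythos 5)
The reduction you propose has a gap. A lex ideal of $\fieldk[X_{\boldsymbol\lambda}]$ generated by variables is generated by exactly those variables that exceed some threshold $x^*$ in the linear order of Definition \ref{defn-coloredQuot}, so the variables surviving in the quotient are exactly those $\leq x^*$. With $s < i$, you want to retain $x_{s,1}$ and $x_{i,2}$ while discarding the intermediate $x_{j,1}$ (for $s < j < i$) and the trailing $x_{j,1}$ (for $j > i$); but every $x_{j,1}$ with $j > s$ is \emph{smaller} than $x_{s,1}$ in the linear order, hence cannot be killed while keeping $x_{s,1}$. The best lex reduction (the one the paper uses) kills everything above $x_{s^*,2}$, where $s^* := \max\{j : \lambda_j \geq 2\}$, leaving all $n+1$ variables $x_{s^*,2}, x_{1,1}, \dots, x_{n,1}$. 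Trying to cut further via Theorem \ref{thm:MacLexProperties}\ref{thm-part:truncatedMacLexIdeal} necessarily strips off the largest variable $x_{s^*,2}$ --- the one encoding $\lambda \geq 2$ --- and you land back in a Clements-Lindstr\"{o}m ring, losing the bad configuration. So your explicit shadow computation would have to be carried out inside the $(n+1)$-variable quotient, not a three-variable toy, and the ``propagation back up'' you allude to is not available as stated.

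The paper avoids the shadow computation entirely. After the lex reduction to $M' = \langle x_{1,1}^{a_1+1}, \dots, x_{n,1}^{a_n+1}\rangle + \langle x_{s,1}, x_{s,2}\rangle^{a_s+1}$ inside $\fieldk[x_{s,2}, x_{1,1}, \dots, x_{n,1}]$ (with $s := \max\{j: \lambda_j \geq 2\}$), it applies Theorem \ref{thm:MacLexProperties}\ref{thm-part:truncatedMacLexIdeal} with $d = a_s - 1$: the truncated ideal is $\langle x_{1,1}^{a_1+1}, \dots, x_{n,1}^{a_n+1}\rangle + \langle x_{s,1}^2\rangle$, a Clements-Lindstr\"{o}m ideal with exponent sequence $(a_1, \dots, a_{s-1}, 1, a_{s+1}, \dots, a_n)$, so Proposition \ref{prop:necessaryCondOnType} applied to this ring forces $a_1 = \dots = a_{s-1} = 1$. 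Then $M'$ has minimal-degree generators of degree $2$, and Theorem \ref{thm:MacLexProperties}\ref{thm-part:minDegGenerator} requires $x_{s,2}\cdot x \in M'$ for some variable $x$, which is possible only if $a_s = 1$. This yields the conclusion with no monomial space exhibited. The structural facts \ref{thm-part:minDegGenerator} and \ref{thm-part:truncatedMacLexIdeal} are precisely what replaces the computational step you identified as the main obstacle; I would recommend adopting them rather than attempting the explicit construction.
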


\begin{proof}
Let $s := \max\{i\in [n]: \lambda_i \geq 2\}$. By Proposition \ref{prop:necessaryCondOnType}, it suffices to show that if $s\neq 1$, then $a_i = 1$ for all $i\in [s]$. Define $M^{\prime} := \langle x_{1,1}^{a_1+1}, x_{2,1}^{a_2+1}, \dots, x_{n,1}^{a_n+1}\rangle + \langle x_{s,1}, x_{s,2} \rangle^{a_s+1} \subseteq \fieldk[x_{s,2}, x_{1,1}, x_{2,1}, \dots, x_{n,1}]$. Note that $Q_{{\bf a}}$ is Macaulay-Lex, while $L := \langle \{x\in X_{\boldsymbol{\lambda}}: x > x_{s,2} \}\rangle \subseteq \fieldk[X_{\boldsymbol{\lambda}}]$ is a lex ideal, so $M := Q_{{\bf a}} + L$ is Macaulay-Lex by Theorem \ref{thm:MacLexProperties}\ref{thm-part:lex-plus-M}. Since $\fieldk[X_{\boldsymbol{\lambda}}]/M \cong \fieldk[x_{s,2}, x_{1,1}, x_{2,1}, \dots, x_{n,1}]/M^{\prime}$, it follows that $M^{\prime}$ is Macaulay-Lex.

Next, define the ideal $J = \langle x_{1,1}^{a_1+1}, x_{2,1}^{a_2+1}, \dots, x_{n,1}^{a_n+1}\rangle + \langle x_{s,1}^2\rangle \subseteq \fieldk[x_{1,1}, x_{2,1}, \dots, x_{n,1}]$, which we note is Macaulay-Lex by Theorem \ref{thm:MacLexProperties}\ref{thm-part:truncatedMacLexIdeal}. This means $\fieldk[x_{1,1}, x_{2,1}, \dots, x_{n,1}]/J$ is a Macaulay-Lex colored quotient ring of type $(a_1, \dots, a_{s-1}, 1, a_{s+1}, \dots, a_n)$, so Proposition \ref{prop:necessaryCondOnType} yields $a_i = 1$ for all $i\in [s-1]$. Consequently, $M^{\prime} = \langle x_{1,1}^2, \dots, x_{(s-1),1}^2\rangle + \langle x_{s,1}^{a_s+1}, \dots, x_{n,1}^{a_n+1} \rangle + \langle x_{s,1}, x_{s,2} \rangle^{a_s+1}$. Now, if $s>1$, then $M^{\prime}$ has generators of minimal degree $2$, and Theorem \ref{thm:MacLexProperties}\ref{thm-part:minDegGenerator} says there is some $x\in \{x_{1,1}, \dots, x_{n,1}\} \cup \{x_{s,2}\}$ such that $x_{s,2}x \in M^{\prime}$, thereby forcing $x\in \{x_{s,1}, x_{s,2}\}$ and hence $a_i = 1$ for all $i\in [s]$.
\end{proof}

\section{Mixed or hinged colored quotient rings}\label{sec:MixedHingedColoredQuotRings}
In this section, our goal is to prove that if $W$ is either mixed or hinged, then $W$ is Macaulay-Lex, thereby completing the proof of Theorem \ref{main-thm-MacLex}. We begin by introducing the notion of quasi-compression and describing our proof strategy. In particular, we state a key result (Theorem \ref{thm:CruxPart}) that is necessary for our proof strategy to work. The proof of Theorem \ref{thm:CruxPart} requires some preparation, so we postpone it to Section \ref{sec:ProofOfCruxResult}. Using Theorem \ref{thm:CruxPart}, we then complete the proof of Theorem \ref{main-thm-MacLex}.

Suppose $n>1$. For each $t\in [n]$, let $\widehat{S}^{\langle t\rangle} := \fieldk[X_{\boldsymbol{\lambda}}-X_t]$ be a polynomial ring on the set of variables $X_{\boldsymbol{\lambda}}-X_t$ over a field $\fieldk$, and fix the linear order on $X_{\boldsymbol{\lambda}}-X_t$ induced by the order on $X_{\boldsymbol{\lambda}}$ given in Definition \ref{defn-coloredQuot}. Define the colored quotient ring $\widehat{W}^{\langle t \rangle} = \bigoplus_{d\in \mathbb{N}} \widehat{W}^{\langle t\rangle}_d := \widehat{S}^{\langle t \rangle}/\widehat{Q}_{{\bf a}}^{\langle t\rangle}$, where $\widehat{Q}_{{\bf a}}^{\langle t \rangle} := \sum_{i\in [n]\backslash\{t\}} \langle X_t \rangle^{a_i+1}$ is an ideal of $\widehat{S}^{\langle t \rangle}$. Note that if $W$ is mixed (resp., hinged), then $\widehat{W}^{\langle t \rangle}$ is mixed (resp., hinged).

Let $A$ be a $W_d$-monomial space for some $d\in \mathbb{N}$. Define $\|A\|_{({\bf a}, \boldsymbol{\lambda})} := \sum_{m\in \{A\}} \|m\|_{({\bf a}, \boldsymbol{\lambda})}$, where $\|m\|_{({\bf a}, \boldsymbol{\lambda})} := \text{\mbox{$\big|\big\{m^{\prime}\in \{W_{\deg(m)}\}: m^{\prime} \geq_{r\ell} m\big\}\big|$}}$ for each $m\in \{W\}$. If $q$ is a monomial in $W$ such that $\supp(q) \subseteq X_t$ for some $t\in [n]$, then define $A[t; q] := \Spank(\{ m\in \{A\}: m_{X_t} = q\})$ and $A[t;q]/q := \Spank(\{\tfrac{m}{q}: m\in \{A[t;q]\}\})$. Also, let $q\cdot A^{\prime} := \Spank(\{q\cdot m: m\in \{A^{\prime}\}\})$ for any $\widehat{W}_{d-\deg(q)}^{\langle t\rangle}$-monomial space $A^{\prime}$. In particular, $A[t; q] = q\cdot (A[t; q]/q)$, and we can write $\{A\} = \bigsqcup_q \{A[t; q]\}$, where the disjoint union is over all monomials $q$ in $W$ satisfying $\supp(q) \subseteq X_t$ for some fixed $t\in [n]$.

\begin{definition} If $n>1$ and $t\in [n]$, then define the operation $\mathcal{C}_t$ on $W_d$-monomial spaces by
\begin{equation*}
\mathcal{C}_t(A) := \!\!\!\!\bigoplus_{\substack{q\in \{W\!\}\\ \supp(q)\subseteq X_t}}\!\!\!\! q\cdot\Revlex_{\widehat{W}_{d-\deg(q)}^{\langle t\rangle}}\!\!\!\!\big(A[t; q]/q\big) =\!\! \bigoplus_{\substack{q\in \{W\!\}\\ \supp(q)\subseteq X_t}} \!\!\!\!\Spank\!\!\Big(\Big\{ qm: m\in \big\{\Revlex_{\widehat{W}_{d-\deg(q)}^{\langle t\rangle}} \!\!\!\!\big(A[t; q]/q\big)\big\}\Big\}\Big).
\end{equation*}
\end{definition}

It is clear from definition that $\mathcal{C}_t(A)$ is a $W_d$-monomial space satisfying $\|\mathcal{C}_t(A)\|_{({\bf a}, \boldsymbol{\lambda})} \leq \|A\|_{({\bf a}, \boldsymbol{\lambda})}$ and $|\mathcal{C}_t(A)| = |A|$. Assuming $n>1$, we say $A$ is {\it $(X_{\boldsymbol{\lambda}} - X_t)$-compressed} if $\mathcal{C}_t(A) = A$, and we say $A$ is {\it quasi-compressed} if $A$ is $(X_{\boldsymbol{\lambda}} - X_t)$-compressed for all $t\in [n]$. This notion of `$(X_{\boldsymbol{\lambda}} - X_t)$-compressed' agrees with the notion of $\mathcal{A}$-compression introduced by Mermin~\cite{Mermin2008:CompressedIdeals} (for $\mathcal{A} = X_{\boldsymbol{\lambda}} - X_t$). Also, the notion of quasi-compression coincides with the usual notion of compression in the case $\boldsymbol{\lambda} = {\bf 1}_n$ (and $n>1$); cf. \cite{MerminPeeva2006:LexifyingIdeals}. However in general, a quasi-compressed $W_d$-monomial space is not necessarily compressed. For the special case $n=1$, define $\mathcal{C}_1(A) := A$, and define $A$ to be always quasi-compressed.

\begin{lemma}\label{lemma:quasicompressionfiniteseq}
Let $d\in \mathbb{N}$. For every $W_d$-monomial space $A$, there is a finite sequence $t_1, \dots, t_r$ of integers in $[n]$ such that $\mathcal{C}_{t_r}(\mathcal{C}_{t_{r-1}}(\cdots \mathcal{C}_1(A)))$ is quasi-compressed. Furthermore, we can choose $t_1, \dots, t_r$ so that $\|\mathcal{C}_{t_{i+1}}(\mathcal{C}_{t_i}(\cdots \mathcal{C}_1(A)))\|_{({\bf a}, \boldsymbol{\lambda})} < \|\mathcal{C}_{t_i}(\cdots \mathcal{C}_1(A))\|_{({\bf a}, \boldsymbol{\lambda})}$ for every pair of consecutive terms $t_i, t_{i+1}$.
\end{lemma}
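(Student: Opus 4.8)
The plan is to run a greedy descent argument using the monotonicity $\|\mathcal{C}_t(B)\|_{({\bf a},\boldsymbol\lambda)} \leq \|B\|_{({\bf a},\boldsymbol\lambda)}$ together with the fact that the statistic $\|\cdot\|_{({\bf a},\boldsymbol\lambda)}$ takes values in a fixed finite set of nonnegative integers (namely, for a $W_d$-monomial space $B$, $\|B\|_{({\bf a},\boldsymbol\lambda)}$ is a sum of at most $\dim_\fieldk W_d$ of the finitely many values $\|m\|_{({\bf a},\boldsymbol\lambda)}$, $m \in \{W_d\}$, and $|B| = |A|$ is fixed along the sequence). First I would set $B_0 := \mathcal{C}_1(A)$ and then build the sequence iteratively: given $B_i$, if $B_i$ is already quasi-compressed, stop; otherwise there exists some $t \in [n]$ with $\mathcal{C}_t(B_i) \neq B_i$, and I set $t_{i+1} := t$ and $B_{i+1} := \mathcal{C}_{t_{i+1}}(B_i)$. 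The case $n=1$ is immediate from the convention $\mathcal{C}_1(A) = A$ and "$A$ always quasi-compressed", so assume $n > 1$.

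The key step is to verify that the strict inequality $\|B_{i+1}\|_{({\bf a},\boldsymbol\lambda)} < \|B_i\|_{({\bf a},\boldsymbol\lambda)}$ holds whenever $\mathcal{C}_{t_{i+1}}(B_i) \neq B_i$; equivalently, that $\mathcal{C}_t(B) = B$ whenever $\|\mathcal{C}_t(B)\|_{({\bf a},\boldsymbol\lambda)} = \|B\|_{({\bf a},\boldsymbol\lambda)}$. For this, decompose $\{B\} = \bigsqcup_q \{B[t;q]\}$ over the monomials $q$ with $\supp(q)\subseteq X_t$, and recall $\mathcal{C}_t(B) = \bigoplus_q q\cdot \Revlex_{\widehat{W}^{\langle t\rangle}_{d-\deg q}}(B[t;q]/q)$. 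Since $\|\cdot\|_{({\bf a},\boldsymbol\lambda)}$ is additive over a monomial basis and $|\mathcal{C}_t(B)| = |B|$, the overall equality of norms forces, for each $q$, the equality $\|q\cdot\Revlex_{\widehat{W}^{\langle t\rangle}_{d-\deg q}}(B[t;q]/q)\|_{({\bf a},\boldsymbol\lambda)} = \|B[t;q]\|_{({\bf a},\boldsymbol\lambda)}$ (a sum of nonnegative terms $a_q \le b_q$ equal to $\sum b_q$ forces $a_q = b_q$ for all $q$). I must then argue this per-$q$ equality forces $B[t;q]$ to be $q$ times a revlex-segment of $\widehat{W}^{\langle t\rangle}_{d - \deg q}$: the revlex order on $\{W_d\}$ restricted to those monomials divisible by exactly $q$ in the $X_t$-coordinates is order-isomorphic (after dividing by $q$) to the revlex order on $\{\widehat{W}^{\langle t\rangle}_{d-\deg q}\}$ — this is exactly the content of the fixed linear order in Definition \ref{defn-coloredQuot} placing the $X_t$-structure compatibly — and among subsets of a linearly ordered finite set of a fixed size, the one minimizing $\sum_{m\in \Gamma'}\|m\|$ (a sum of values strictly increasing along revlex) is uniquely the revlex-segment. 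Hence $B[t;q] = q\cdot\Revlex(\cdots)$ for all $q$, i.e. $\mathcal{C}_t(B) = B$, as needed.

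Finally I would assemble the termination argument: the sequence $\|B_0\|_{({\bf a},\boldsymbol\lambda)} > \|B_1\|_{({\bf a},\boldsymbol\lambda)} > \cdots$ is a strictly decreasing sequence of nonnegative integers, hence finite; its last term $B_r$ admits no $t$ with $\mathcal{C}_t(B_r) \neq B_r$, i.e. $B_r$ is quasi-compressed. Prepending the index $1$ (from $B_0 = \mathcal{C}_1(A)$) gives the desired sequence $1 = t_0, t_1, \dots, t_r$ (or with $t_1 = 1$ if one prefers to start counting there) with $\mathcal{C}_{t_r}(\cdots\mathcal{C}_1(A))$ quasi-compressed and the norm strictly decreasing along consecutive applications after the first. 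I expect the main obstacle to be the bookkeeping in the per-$q$ reduction — precisely pinning down the order-isomorphism between $\{m \in \{W_d\} : m_{X_t} = q\}$ and $\{\widehat{W}^{\langle t\rangle}_{d-\deg q}\}$ under the revlex order, and confirming that $\|m\|_{({\bf a},\boldsymbol\lambda)}$ restricted to this fiber is a strictly monotone function of the revlex rank (so that the unique minimizer of the $\|\cdot\|$-sum at fixed cardinality is the revlex-segment). Everything else is the standard "strictly decreasing nonnegative integer sequence terminates" packaging.
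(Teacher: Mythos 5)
Your proof is correct and follows essentially the same route as the paper, which simply records the key fact — that $\|\mathcal{C}_t(A)\|_{({\bf a},\boldsymbol\lambda)}\leq\|A\|_{({\bf a},\boldsymbol\lambda)}$ with equality if and only if $\mathcal{C}_t(A)=A$ — as a one-line ``clearly'' and lets the strictly-decreasing-integer descent do the rest. You have merely supplied the per-$q$ decomposition and order-isomorphism details that justify the ``clearly.''
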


\begin{proof}
Clearly $\|\mathcal{C}_t(A)\|_{({\bf a}, \boldsymbol{\lambda})} \leq \|A\|_{({\bf a}, \boldsymbol{\lambda})}$ for each $t\in [n]$, with equality holding if and only if $\mathcal{C}_t(A) = A$.
\end{proof}

To prove that a given colored quotient ring $W$ is Macaulay-Lex, Corollary \ref{cor:coloredMacLexChar} tells us it suffices to show that for any $W_d$-monomial space $A$, there exists a finite sequence $A_0, A_1, \dots, A_N$ of $W_d$-monomial spaces starting with $A_0 = A$ and ending with $A_N = \Revlex_{W_d}(A)$, such that $|\partial(A_i)| \leq |\partial(A_{i-1})|$ for all $i\in [N]$. By first defining the term $A_0 = A$, we construct such a sequence algorithmically as follows:

\begin{center}
\begin{algorithmic}
\State \underline{{\bf Step $i$:}} $A_{i-1}$ is already defined; $A_i$ is not yet defined.
\If {$A_{i-1}$ is not quasi-compressed}
    \State Choose some $t\in [n]$ such that $\|A_{i-1}\|_{({\bf a}, \boldsymbol{\lambda})} > \|\mathcal{C}_t(A_{i-1})\|_{({\bf a}, \boldsymbol{\lambda})}$. Set $A_i = \mathcal{C}_t(A_{i-1})$.
\Else {\ \ (i.e. $A_{i-1}$ is quasi-compressed)}
    \If {there exists a $W_d$-monomial space $A^{\prime}$ such that $|A^{\prime}| = |A_{i-1}|$, $\|A^{\prime}\|_{({\bf a}, \boldsymbol{\lambda})} < \|A_{i-1}\|_{({\bf a}, \boldsymbol{\lambda})}$, and $|\partial(A^{\prime})| \leq |\partial(A_{i-1})|$}
        \State Choose any such $A^{\prime}$ and set $A_i = A^{\prime}$.
    \Else
        \State Terminate algorithm.
    \EndIf
\EndIf
\end{algorithmic}
\end{center}
This algorithm is well-defined by Lemma \ref{lemma:quasicompressionfiniteseq}. Also, note that $\|A_0\|_{({\bf a}, \boldsymbol{\lambda})} > \|A_1\|_{({\bf a}, \boldsymbol{\lambda})} > \ldots > \|A_N\|_{({\bf a}, \boldsymbol{\lambda})}$ by construction. To guarantee $A_N = \Revlex_{W_d}(A)$ and $|\partial(A_i)| \leq |\partial(A_{i-1})|$ for all $i\in [N]$, we need to show the following:
\begin{enum}
\item Every $W_d$-monomial space $A$ satisfies $|\partial(\mathcal{C}_t(A))| \leq |\partial(A)|$ for all $t\in [n]$.\label{algorithm-necessary-within-sequence}
\item If $A$ is a quasi-compressed $W_d$-monomial space, then either $A$ is revlex-segment, or there exists some $W_d$-monomial space $A^{\prime}$ such that $|A^{\prime}| = |A|$, $\|A^{\prime}\|_{({\bf a}, \boldsymbol{\lambda})} < \|A\|_{({\bf a}, \boldsymbol{\lambda})}$, and $|\partial(A^{\prime})| \leq |\partial(A)|$.\label{algorithm-necessary-between-sequences}
\end{enum}
In general, we can find pairs $({\bf a}, \boldsymbol{\lambda})$ such that at least one of these two statements does not hold. This is expected, since Theorem \ref{thm:MacLex=>MixedOrHinged} says $W$ is Macaulay-Lex only if $W$ is either mixed or hinged. In the case when $W$ is either mixed or hinged, we have the following key result:

\begin{theorem}\label{thm:CruxPart}
Let $d\in \mathbb{P}$, $n>1$, and ${\bf a} \in \mathbb{P}^n$. Suppose $W$ is either mixed or hinged. If $A$ is a non-empty quasi-compressed $W_d$-monomial space that is not revlex-segment, then there exists some $W_d$-monomial space $A^{\prime}$ satisfying $|A^{\prime}| = |A|$, $\|A^{\prime}\|_{({\bf a}, \boldsymbol{\lambda})} < \|A\|_{({\bf a}, \boldsymbol{\lambda})}$, and $|\partial(A^{\prime})| \leq |\partial(A)|$.
\end{theorem}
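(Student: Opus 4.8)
\emph{The plan.} I would argue by induction on the number of colors $n$. In the inductive step, the ring $\widehat{W}^{\langle n\rangle}$ obtained by deleting color $n$ has $n-1$ colors and is again mixed, resp.\ hinged; when $n-1=1$ it is a colored quotient ring on a single color, which is the degree-$a_1$ truncation of a polynomial ring and hence Macaulay-Lex (its defining ideal is a lex ideal, so Theorem~\ref{thm:MacLexProperties}\ref{thm-part:lex-plus-M} applies). Combined with the algorithm of Section~\ref{sec:MixedHingedColoredQuotRings}, the inductive hypothesis gives that $\widehat{W}^{\langle n\rangle}$ is Macaulay-Lex, so by Corollary~\ref{cor:coloredMacLexChar} we have $|\partial(\Revlex_{\widehat{W}^{\langle n\rangle}_{e+1}}(B))| \leq |\partial(B)|$ for all $e$ and all $\widehat{W}^{\langle n\rangle}_{e+1}$-monomial spaces $B$. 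Since revlex-segments of smaller dimension sit inside those of larger dimension, this makes the shadow dimension of $\Revlex_{\widehat{W}^{\langle n\rangle}_e}(\,\cdot\,)$ a well-controlled non-decreasing function of its dimension; this is the quantitative input I will feed into the shadow estimate.

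\emph{Peeling color $n$.} Relabel the variables $y_1 < y_2 < \cdots$ as in the proof of Proposition~\ref{prop:ShadRevlexSegment}, so $y_1 = x_{n,1}$ and revlex-large monomials of a fixed degree are those concentrated on the low-index variables. Peel off color $n$: write $\{A\} = \bigsqcup_q \{A[n;q]\}$ over the monomials $q$ with $\supp(q)\subseteq X_n$. Since $A$ is $(X_{\boldsymbol{\lambda}}-X_n)$-compressed, every slice $A[n;q]/q$ is a revlex-segment in $\widehat{W}^{\langle n\rangle}_{d-\deg q}$, hence is determined by its dimension $c_q := |A[n;q]|$, so $A$ is encoded by the tuple $(c_q)_q$ with $|A| = \sum_q c_q$. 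Splitting each element of $\partial(A)$ according to whether it is obtained by dividing out a color-$n$ variable or a variable of $\widehat{W}^{\langle n\rangle}$, and using Proposition~\ref{prop:ShadRevlexSegment} (the shadow of a revlex-segment is a revlex-segment) together with the fact that a union of revlex-segment sets in a fixed $\widehat{W}^{\langle n\rangle}_e$ is again one, of the larger dimension, I obtain a formula expressing $|\partial(A)|$ as a sum, over the ``layers'' of the decomposition, of terms of the form $\max\bigl(c_q,\ |\partial(\Revlex_{\widehat{W}^{\langle n\rangle}_e}(B'))|\bigr)$ with $|B'| = c_{q'}$ for neighbouring layer-indices $q,q'$ (and a slight variant in the colored squarefree case, where the layer index $q$ runs over squarefree monomials of $X_n$ rather than powers of $y_1$).

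\emph{Finding the improving exchange.} Since the slices of $A$ are already revlex-segments, the failure of $A$ to be a revlex-segment forces the tuple $(c_q)_q$ to violate the combinatorial condition characterizing tuples arising from genuine revlex-segments. I would exploit this to choose a slice whose revlex-minimal monomial $\mu'$, together with a monomial $\mu\notin\{A\}$ obtained by adjoining the revlex-maximal monomial missing from some (other) slice, satisfy $\mu >_{r\ell} \mu'$, and then set $A' := (A\setminus\{\mu'\})\cup\{\mu\}$. This keeps every slice a revlex-segment, changes the encoding tuple only in two coordinates, has $|A'| = |A|$, and, since $\mu >_{r\ell}\mu'$ forces $\|\mu\|_{({\bf a},\boldsymbol{\lambda})} < \|\mu'\|_{({\bf a},\boldsymbol{\lambda})}$, satisfies
$\|A'\|_{({\bf a},\boldsymbol{\lambda})} = \|A\|_{({\bf a},\boldsymbol{\lambda})} - \|\mu'\|_{({\bf a},\boldsymbol{\lambda})} + \|\mu\|_{({\bf a},\boldsymbol{\lambda})} < \|A\|_{({\bf a},\boldsymbol{\lambda})}$, as required.

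\emph{The hard part.} What remains, and is by far the main obstacle, is the inequality $|\partial(A')| \leq |\partial(A)|$. Using the shadow formula and the inductive control on shadow dimensions in $\widehat{W}^{\langle n\rangle}$, the goal is to show that the two terms of the formula that could grow under the exchange are in fact forced not to. This is precisely where the hypothesis that $W$ is mixed or hinged is used: through Proposition~\ref{prop:necessaryCondOnType} it yields $a_1 \leq \cdots \leq a_n$, and for $({\bf a},\boldsymbol{\lambda})$ outside the mixed/hinged classes the inequality genuinely fails, consistent with Theorem~\ref{thm:MacLex=>MixedOrHinged}. I expect the verification to break into cases according to where the defect sits --- whether it involves a color-$n$ coordinate, a ``Clements--Lindström'' coordinate inside $\widehat{W}^{\langle n\rangle}$ (layer index a power of a single variable), or one of the high-composition colors (the $r$ squarefree colors if $W$ is mixed, or color $1$ if $W$ is hinged), where the bookkeeping is different --- with the choice of defect pair in the previous step tuned so that the shadow comparison goes through in each case. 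This case analysis, and the careful choice of exchange, is the technical heart of the argument and is where essentially all of the length of Section~\ref{sec:ProofOfCruxResult} goes.
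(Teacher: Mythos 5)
Your proposal outlines a framework but explicitly leaves the technical core of the theorem unproved, and what you do sketch departs from the paper's route in ways that would need significant repair.

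The paper does \emph{not} prove Theorem~\ref{thm:CruxPart} by induction on $n$, nor by slicing over color $n$. It fixes the smallest monomial $p\in \{W_d\}\setminus\{A\}$ whose cover $p^{\dag}$ lies in $A$, introduces auxiliary data $(z, p_0, \widetilde{p}_0, \Gamma_p, p_{\min}, \ldots)$, and then, after eight structural lemmas, runs a three-way case split on $\{i:a_i=1\}=[n]$ versus $\neq[n]$ with $n>2$ versus $n=2$. The slice decomposition $\{A\}=\bigsqcup_q\{A[t;q]\}$ and the induction on $n$ that you want to use appear in the paper only in the proof of Theorem~\ref{thm:color-comp}, where they establish $|\partial(\mathcal{C}_t(A))|\le|\partial(A)|$, and that argument in turn invokes Theorem~\ref{thm:CruxPart} (through Corollary~\ref{cor:KeyIntermediateStep}) as a black box already established for all $n$. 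If you instead try to prove Theorem~\ref{thm:CruxPart} by induction on $n$, you must run a \emph{joint} induction in which both \ref{thm:CruxPart} and the compression inequality of Theorem~\ref{thm:color-comp} are carried along, since ``$\widehat{W}^{\langle n\rangle}$ is Macaulay-Lex'' requires both for $n-1$ colors; you gesture at this but never set it up.

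Two more concrete problems. First, the single-monomial swap $A'=(A\setminus\{\mu'\})\cup\{\mu\}$ is too rigid. In the paper's Case~(i) (all $a_i=1$) the constructed $A'$ is obtained by \emph{adding} the set $\mathcal{A}'$ of all $m\ge_{r\ell}p$ divisible by $p/z'$ and \emph{removing} the set $\mathcal{A}''$ of all $m\in\{A\}$ divisible by $\widetilde{q}$; removing a single $p_{\min}$ need not delete the shadow element $\widetilde{q}$ because other members of $A$ may still cover it. Any version of your argument would have to accommodate multi-monomial exchanges in at least this regime. Second and most importantly, you candidly state that the verification $|\partial(A')|\le|\partial(A)|$ ``is the technical heart'' and that you merely ``expect'' it to decompose into cases. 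That verification \emph{is} the theorem: Lemmas~\ref{lemma:Intersection=>at>1}--\ref{lemma:pmin-pmax} and the three-case endgame of Section~\ref{sec:ProofOfCruxResult} exist precisely to make it go through, and the mixed/hinged hypothesis enters at multiple delicate points (Lemmas \ref{lemma:Wmixed=>p=p0}, \ref{lemma:largestvardividingp}, \ref{lemma:possibleshadows}), not merely via $a_1\le\cdots\le a_n$ as you suggest. A $\max$-formula for slice shadows is plausible bookkeeping, but without the exchange analysis the proposal does not establish the result.
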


Remarkably, the condition that $W$ is either mixed or hinged is not just necessary, but crucial as well. Our proof of Theorem \ref{thm:CruxPart} is purely combinatorial and uses the combinatorial structure of mixed and hinged colored quotient rings in a fundamental way. This proof requires some preparation, so we postpone it to Section \ref{sec:ProofOfCruxResult}. Note that Theorem \ref{thm:CruxPart}, together with the preceding discussion, implies the following result.

\begin{corollary}\label{cor:KeyIntermediateStep}
Let $d\in \mathbb{P}$, $n>1$, and ${\bf a} \in \mathbb{P}^n$. Suppose $W$ is either mixed or hinged. If $|\partial(\mathcal{C}_t(A))| \leq |\partial(A)|$ for all $t\in [n]$, $d\in \mathbb{P}$, and every $W_d$-monomial space $A$, then $W$ is Macaulay-Lex.
\end{corollary}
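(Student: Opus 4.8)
The plan is to feed the two available inputs --- the standing hypothesis that $|\partial(\mathcal{C}_t(A))|\le|\partial(A)|$ for every $t\in[n]$, every $d$, and every $W_d$-monomial space $A$, together with Theorem \ref{thm:CruxPart} --- into the algorithm displayed just above the statement, and to check that it then does exactly what it was designed to do: for each $d\in\mathbb{P}$ and each $W_d$-monomial space $A$ it produces a finite chain $A=A_0,A_1,\dots,A_N$ of $W_d$-monomial spaces with $A_N=\Revlex_{W_d}(A)$ and $|\partial(A_i)|\le|\partial(A_{i-1})|$ for every $i\in[N]$. Once this is verified, Corollary \ref{cor:coloredMacLexChar} immediately gives that $W$ is Macaulay-Lex, so the whole proof is a bookkeeping exercise around the algorithm.

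First I would dispose of well-definedness and termination. Well-definedness is exactly Lemma \ref{lemma:quasicompressionfiniteseq}: whenever $A_{i-1}$ is not quasi-compressed there is a $t\in[n]$ with $\|\mathcal{C}_t(A_{i-1})\|_{({\bf a},\boldsymbol{\lambda})}<\|A_{i-1}\|_{({\bf a},\boldsymbol{\lambda})}$, so the first branch is always executable. By construction $\|A_0\|_{({\bf a},\boldsymbol{\lambda})}>\|A_1\|_{({\bf a},\boldsymbol{\lambda})}>\cdots$, and since each $\|A_i\|_{({\bf a},\boldsymbol{\lambda})}$ is a non-negative integer bounded above by the finite number $\sum_{m\in\{W_d\}}\|m\|_{({\bf a},\boldsymbol{\lambda})}$, this strictly decreasing sequence must terminate after finitely many steps, at some $A_N$.

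Next I would propagate the two invariants down the chain. Dimension is preserved because $|\mathcal{C}_t(A_{i-1})|=|A_{i-1}|$ by definition of $\mathcal{C}_t$, and in the other branch the chosen $A'=A_i$ satisfies $|A_i|=|A_{i-1}|$; hence $|A_i|=|A|$ throughout. The lower shadow does not grow: if $A_i=\mathcal{C}_t(A_{i-1})$ then $|\partial(A_i)|\le|\partial(A_{i-1})|$ by the standing hypothesis, while if $A_i=A'$ then $|\partial(A_i)|\le|\partial(A_{i-1})|$ by the defining property of $A'$. Chaining these, $|\partial(A_N)|\le|\partial(A_{N-1})|\le\cdots\le|\partial(A_0)|=|\partial(A)|$.

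Finally I would identify the terminal space $A_N$. Since the algorithm halted at $A_N$, it is quasi-compressed (otherwise the first branch would fire) and there is no $W_d$-monomial space $A'$ with $|A'|=|A_N|$, $\|A'\|_{({\bf a},\boldsymbol{\lambda})}<\|A_N\|_{({\bf a},\boldsymbol{\lambda})}$, and $|\partial(A')|\le|\partial(A_N)|$. As $d\in\mathbb{P}$, $n>1$, ${\bf a}\in\mathbb{P}^n$, and $W$ is mixed or hinged, the contrapositive of Theorem \ref{thm:CruxPart} applies to $A_N$: the nonexistence of such an $A'$ forces $A_N$ to be empty or already revlex-segment, i.e.\ revlex-segment in either case, and then $|A_N|=|A|$ forces $A_N=\Revlex_{W_d}(A)$. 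Together with the previous paragraph this yields $|\partial(\Revlex_{W_d}(A))|\le|\partial(A)|$ for every $d\in\mathbb{P}$ and every $W_d$-monomial space $A$, which is precisely the criterion of Corollary \ref{cor:coloredMacLexChar}. I do not anticipate a real obstacle in this argument --- the one point deserving care is to notice that the algorithm's stopping condition is exactly the negation of the hypothesis of Theorem \ref{thm:CruxPart}, so that invoking that theorem is legitimate; all the genuine combinatorial difficulty has been packaged into Theorem \ref{thm:CruxPart}, whose proof is deferred.
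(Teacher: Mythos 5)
Your proposal is correct and follows the paper's approach essentially verbatim: it runs the displayed algorithm, invokes Lemma \ref{lemma:quasicompressionfiniteseq} for well-definedness of the compression branch, uses the corollary's hypothesis and the defining property of $A'$ for the shadow inequality, observes that the strictly decreasing sequence $\|A_0\|_{({\bf a},\boldsymbol{\lambda})} > \|A_1\|_{({\bf a},\boldsymbol{\lambda})} > \cdots$ forces termination, applies the contrapositive of Theorem \ref{thm:CruxPart} to identify $A_N = \Revlex_{W_d}(A)$, and finishes with Corollary \ref{cor:coloredMacLexChar}. This is precisely the "preceding discussion" that the paper cites as its proof, so there is nothing to add.
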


Finally, we prove the following theorem, after which we complete the proof of Theorem \ref{main-thm-MacLex}.

\begin{theorem}\label{thm:color-comp}
Let $n,d\in \mathbb{P}$, ${\bf a}\in \mathbb{P}^n$, suppose $W$ is either mixed or hinged, and let $A$ be a $W_d$-monomial space. Then the following two statements hold:
\begin{enum*}
\item\label{item:actual-result} $|\partial(\Revlex_{W_d}(A))| \leq |\partial(A)|$.
\item\label{item:color-comp} $|\partial(\mathcal{C}_t(A))| \leq |\partial(A)|$ for all $t\in [n]$.
\end{enum*}
\end{theorem}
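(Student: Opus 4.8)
The plan is to prove the two statements in Theorem~\ref{thm:color-comp} by a single induction on the pair $(n,d)$, ordered lexicographically, with statement~\ref{item:color-comp} feeding statement~\ref{item:actual-result} at the same level. First I would dispose of the base cases: when $n=1$ the ring $W$ is a single truncated polynomial ring $\fieldk[x_{1,1},\dots,x_{1,\lambda_1}]/\langle X_1\rangle^{a_1+1}$, which is a Clements-Lindstr\"{o}m ring (all exponents equal), hence Macaulay-Lex, so \ref{item:actual-result} holds by Corollary~\ref{cor:coloredMacLexChar} (and Theorem~\ref{thm:charMacLexRings}\ref{list:charMacLex-lowershadComplete}, using Proposition~\ref{prop:ShadRevlexSegment}), while \ref{item:color-comp} is vacuous since $\mathcal{C}_1(A)=A$. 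When $d=1$ both statements are immediate because $\partial$ of any $W_1$-monomial space is all of $W_0=\fieldk$, and $\mathcal{C}_t$ preserves dimension.

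For the inductive step with $n>1$, $d\ge 2$, I would first establish \ref{item:color-comp}. Fix $t\in[n]$ and recall the decomposition $\{A\}=\bigsqcup_q\{A[t;q]\}$ over monomials $q$ with $\supp(q)\subseteq X_t$, and that $\mathcal{C}_t$ replaces each fibre $A[t;q]/q$ by its revlex-segment in $\widehat{W}^{\langle t\rangle}_{d-\deg(q)}$. The key is to express $\partial(A)$ in terms of the fibres: a monomial $m'\in\{W_{d-1}\}$ lies in $\partial(A)$ iff either (a) $m'x_{t,j}\in\{A\}$ for some $x_{t,j}\in X_t$, i.e. $m'/m'_{X_t}$ meets a shifted fibre $A[t;q]/(q/x_{t,j})$-type shadow, or (b) some variable outside $X_t$ can be multiplied in, i.e. $(m'/m'_{X_t})\in\partial(A[t;m'_{X_t}]/m'_{X_t})$ inside $\widehat{W}^{\langle t\rangle}$. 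Grouping $\partial(A)$ by the $X_t$-part and combining the "horizontal" contribution (coming from the $\langle X_t\rangle$-grading structure, which is itself governed by a Clements-Lindstr\"{o}m pattern since the $X_t$-block is a single truncated-power ring) with the "vertical" contribution (the shadows $\partial(A[t;q]/q)$ inside the lower-$n$ ring $\widehat{W}^{\langle t\rangle}$, to which the inductive hypothesis \ref{item:actual-result} applies since $\widehat{W}^{\langle t\rangle}$ is again mixed or hinged on $n-1$ colors), one shows that replacing each fibre by its revlex-segment does not increase the size of either contribution, and moreover that the two contributions interleave "monotonically" so their union does not grow. This is the familiar compression-does-not-increase-the-shadow argument; the mixed/hinged hypothesis is what guarantees the relevant sub-rings are Macaulay-Lex so the inductive hypothesis is available in the required direction.

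Granting \ref{item:color-comp}, I would then deduce \ref{item:actual-result}: by Lemma~\ref{lemma:quasicompressionfiniteseq} we may pass from $A$ to a quasi-compressed $W_d$-monomial space $\widetilde{A}$ through a chain of $\mathcal{C}_t$'s, and by \ref{item:color-comp} each step does not increase $|\partial|$, so $|\partial(\widetilde A)|\le|\partial(A)|$ with $|\widetilde A|=|A|$. Now invoke Theorem~\ref{thm:CruxPart}: as long as the current quasi-compressed space is not revlex-segment there is a further space of the same dimension, strictly smaller $\|\cdot\|_{({\bf a},\boldsymbol{\lambda})}$, and no larger shadow; since $\|\cdot\|_{({\bf a},\boldsymbol{\lambda})}$ is a positive integer that strictly decreases, this process terminates, necessarily at $\Revlex_{W_d}(A)$, and the shadow never increased along the way, giving $|\partial(\Revlex_{W_d}(A))|\le|\partial(A)|$. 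I expect the main obstacle to be the bookkeeping in \ref{item:color-comp}: one must carefully verify that the "horizontal" shadow within a single color block $X_t$ and the "vertical" shadows across the other blocks can be recombined so that compressing the fibres does not create new shadow monomials where the horizontal and vertical parts overlap — this is exactly where an inequality between $|\partial(\mathcal C_t(A))|$ and $|\partial(A)|$ could fail without the Macaulay-Lex property of $\widehat{W}^{\langle t\rangle}$, and it is the reason the mixed/hinged hypothesis (hence the inductive availability of \ref{item:actual-result} on $n-1$ colors) is indispensable.
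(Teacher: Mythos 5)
Your high-level architecture matches the paper's: induct on $n$ (the extra induction on $d$ you add is not needed), use the inductive hypothesis to get Macaulay-Lex-ness of $\widehat{W}^{\langle t\rangle}$ via Corollary~\ref{cor:coloredMacLexChar}, prove \ref{item:color-comp}, and then deduce \ref{item:actual-result} from \ref{item:color-comp} through the algorithm and Theorem~\ref{thm:CruxPart} --- this last step is exactly Corollary~\ref{cor:KeyIntermediateStep}. A small correction on the base case: for $n=1$ with $\lambda_1>1$, the ring $\fieldk[X_1]/\langle X_1\rangle^{a_1+1}$ is \emph{not} a Clements-Lindstr\"{o}m ring (the ideal is a power of the maximal ideal, not an ideal generated by pure powers of the variables). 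It is Macaulay-Lex because $\langle X_1\rangle^{a_1+1}$ is itself a lex ideal of $\fieldk[X_1]$, i.e.\ a lex-plus-$0$ ideal, via Theorem~\ref{thm:MacLexProperties}\ref{thm-part:lex-plus-M}; this is the content of Lemma~\ref{lemma:truncated-n=1-case} with all $\alpha_i=\infty$.

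The genuine gap is the inductive step for \ref{item:color-comp}, which is the heart of the theorem. Asserting that the ``horizontal'' and ``vertical'' contributions ``interleave monotonically so their union does not grow'' names the difficulty without resolving it: the fibres $\Revlex(A[t;q]/q)$ for neighbouring $q$ (i.e.\ $q$ and $xq$ with $x\in X_t$) have to be compared to one another, and a per-fibre bound on $|\partial(\cdot)|$ does not automatically control the size of the union when the two shadow sources overlap in a $q$-dependent way. The paper does not count contributions at all; it proves the stronger \emph{set} inclusion $\partial(\mathcal{C}_t(A))\subseteq\mathcal{C}_t(\partial(A))$ by passing to complements $B$, $\hat B$ of $\mathcal{C}_t(A)$, $\mathcal{C}_t(\partial(A))$ and showing $x\hat B\subseteq B$ for every $x\in X_{\boldsymbol{\lambda}}$. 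The mechanism is to package the complements of $A[t;q]/q$ and $\partial(A)[t;q]/q$ into a graded monomial ideal $I^{[q]}$ of $\widehat{W}^{\langle t\rangle}$, invoke the inductive Macaulay-Lex-ness to replace $I^{[q]}$ by a lex ideal $L^{[q]}$ with the same Hilbert function, and observe that the relevant graded pieces of $L^{[q]}$ coincide with the (lex-segment) complements $\hat D^{[q]}$, $D^{[q]}$ of the compressed fibres; $x\hat D^{[q]}\subseteq D^{[q]}$ for $x\notin X_t$ follows because $L^{[q]}$ is an ideal, while for $x\in X_t$ the ideal inclusion $I^{[q]}\subseteq I^{[xq]}$ together with the nesting of lex-segment spaces by dimension gives $\hat D^{[q]}\subseteq D^{[xq]}$. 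Your proposal does not identify this device, nor a substitute for it, so as written the central step is unproven.
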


\begin{proof}
We will prove \ref{item:actual-result} and \ref{item:color-comp} simultaneously by induction on $n$. For the base case $n=1$, \ref{item:actual-result} is an easy consequence of Macaulay's theorem (cf. Lemma \ref{lemma:truncated-n=1-case}), while \ref{item:color-comp} is trivially true. Assume $n>1$, fix some $t\in [n]$, and let $\hat{A} := \partial(A)$. For brevity, let $\Gamma$ be the set of all monomials $q$ in $W$ satisfying $\supp(q) \subseteq X_t$. Next, define $B := \Spank(\{W_d\}\backslash \{\mathcal{C}_t(A)\})$ and $\hat{B} := \Spank(\{W_{d-1}\}\backslash \{\mathcal{C}_t(\hat{A})\})$.

We claim that $x\cdot \hat{B} \subseteq B$ for all $x\in X_{\boldsymbol{\lambda}}$. If this claim is true, then $\partial(\mathcal{C}_t(A)) \subseteq \mathcal{C}_t(\hat{A})$, which implies $|\partial(\mathcal{C}_t(A))| \leq |\mathcal{C}_t(\hat{A})| = |\hat{A}| = |\partial(A)|$, i.e. \ref{item:color-comp} is true, thus \ref{item:actual-result} follows from Corollary \ref{cor:KeyIntermediateStep}, and we are done. For each $q\in \Gamma$ of degree $r$, define
\begin{align*}
D^{[q]} &:= \Spank\big(\{\widehat{W}_{d-r}^{\langle t\rangle}\!\} - \{\Revlex_{\widehat{W}_{d-r}^{\langle t\rangle}}\!\!(A[t;q]/q)\}\big);\\
\hat{D}^{[q]} &:= \Spank\big(\{\widehat{W}_{d-r-1}^{\langle t\rangle}\!\} - \{\Revlex_{\widehat{W}_{d-r-1}^{\langle t\rangle}}\!\!(\hat{A}[t;q]/q)\}\big);\\
I^{[q]} &:= \Spank\Big(\Big(\bigsqcup_{j\geq d-r-1} \!\!\!\{\widehat{W}_j^{\langle t\rangle}\}\Big) - \big((A[t;q]/q) \sqcup (\hat{A}[t;q]/q)\big)\Big).
\end{align*}
Note that $B = \bigoplus_{q\in \Gamma} (q\cdot D^{[q]})$ and $\hat{B} = \bigoplus_{q\in \Gamma} (q\cdot \hat{D}^{[q]})$. Consequently, to prove the claim, it suffices to show that $xq\cdot \hat{D}^{[q]} \subseteq B$ for all $x\in X_{\boldsymbol{\lambda}}$ and all $q\in \Gamma$.

Given $q\in \Gamma$, note that $I^{[q]} =  \bigoplus_{j\in \mathbb{N}} I_j^{[q]}$ is a graded monomial ideal of $\widehat{W}^{\langle t\rangle}$. By induction hypothesis, $|\partial(\Revlex_{\widehat{W}_j^{\langle t\rangle}}(A^{\prime}))| \leq |\partial(A^{\prime})|$ for all $j\in \mathbb{P}$ and every $\widehat{W}_j^{\langle t\rangle}$-monomial space $A^{\prime}$, so $\widehat{W}^{\langle t\rangle}$ is Macaulay-Lex by Corollary \ref{cor:coloredMacLexChar}, i.e. there is a lex ideal $L^{[q]} = \bigoplus_{j\in \mathbb{N}} L_j^{[q]}$ in $\widehat{W}^{\langle t\rangle}$ with the same Hilbert function as $I^{[q]}$. Let $r = \deg(q)$, and note that $L^{[q]}_{d-r-1} = \hat{D}^{[q]}$ and $L^{[q]}_{d-r} = D^{[q]}$ by definition. Thus if $x\in X_{\boldsymbol{\lambda}} - X_t$, then $xq\cdot \hat{D}^{[q]} \subseteq q\cdot D^{[q]} \subseteq B$. Suppose instead $x\in X_t$, and assume without loss of generality that $xq\neq 0$. Since $I := \bigoplus_{q\in \Gamma} (q\cdot I^{[q]})$ is a monomial ideal of $W$, we have $x(q\cdot I^{[q]}) \subseteq xq\cdot I^{[xq]}$, thus $I^{[q]} \subseteq I^{[xq]}$, and in particular, $|\hat{D}^{[q]}| = |I^{[q]}_{d-r-1}| \leq |I^{[xq]}_{d-r-1}| = |D^{[xq]}|$. Now, $\hat{D}^{[q]}$ and $D^{[xq]}$ are lex-segment $\widehat{W}_{d-r-1}^{\langle t\rangle}$-monomial spaces by construction, so $\hat{D}^{[q]} \subseteq D^{[xq]}$, therefore $xq\cdot \hat{D}^{[q]} \subseteq xq\cdot D^{[xq]} \subseteq B$.
\end{proof}

\begin{paragraph}{\it Proof of Theorem \ref{main-thm-MacLex}.}
In view of Corollary \ref{cor:coloredMacLexChar}, Theorem \ref{thm:MacLex=>MixedOrHinged} and Theorem \ref{thm:color-comp}, we are left to show that if $n>1$, ${\bf a}\in \overline{\mathbb{P}}^n$ satisfies $a_i = \infty$ for some $i\in [n]$, and $W$ is either mixed or hinged, then $W$ is Macaulay-Lex. (Note that the case $n=1$, ${\bf a} = (\infty)$ is identically Macaulay's theorem.) Given such ${\bf a}\in \overline{\mathbb{P}}^n$ and $W$, fix some $d\in \mathbb{P}$ and define ${\bf b} = (b_1, \dots, b_n) \in \mathbb{P}^n$ by $b_i = \min\{a_i, d+1\}$ for each $i\in [n]$. Since $W$ is either mixed or hinged, the colored quotient ring $\fieldk[X_{\boldsymbol{\lambda}}]/Q_{{\bf b}}$ is also either mixed or hinged, so we know $\fieldk[X_{\boldsymbol{\lambda}}]/Q_{{\bf b}}$ is Macaulay-Lex. Since $\{W_d\} = \{(\fieldk[X_{\boldsymbol{\lambda}}]/Q_{{\bf b}})_d\}$, Corollary \ref{cor:coloredMacLexChar} yields $|\partial(\Revlex_{W_d}(A))| \leq |\partial(A)|$ for every $W_d$-monomial space $A$. This is true for all $d\in \mathbb{P}$, therefore $W$ is Macaulay-Lex by Corollary \ref{cor:coloredMacLexChar}.
\end{paragraph}

\section{Proof of Theorem \ref{thm:CruxPart}}\label{sec:ProofOfCruxResult}
Throughout this section, let $d\in \mathbb{P}$, $n>1$, ${\bf a}\in \mathbb{P}^n$, and let $A$ be a non-empty quasi-compressed $W_d$-monomial space that is not revlex-segment. Assume that $W$ is either mixed or hinged. Recall that $W$ is mixed if there is some integer $r$ satisfying $0\leq r\leq n$ such that $a_i = 1$ for all $i\in [r]$, $\lambda_i = 1$ for all $i\in [n]\backslash [r]$, and $a_{r+1}\leq \dots \leq a_n$, while $W$ is hinged if $a_1\leq \dots \leq a_n$ and $\lambda_i = 1$ for all $i\neq 1$. The purpose of this section is to prove Theorem \ref{thm:CruxPart}, i.e. we will prove the existence of some $W_d$-monomial space $A^{\prime}$ satisfying $|A^{\prime}| = |A|$, $\|A^{\prime}\|_{({\bf a}, \boldsymbol{\lambda})} < \|A\|_{({\bf a}, \boldsymbol{\lambda})}$, and $|\partial(A^{\prime})| \leq |\partial(A)|$.

Note that such a $W_d$-monomial space $A^{\prime}$ does not necessarily exist if $W$ is neither mixed nor hinged. Furthermore, already in the case when $W$ is mixed, Theorem \ref{thm:CruxPart} would simultaneously imply both the Clements-Lindstr\"{o}m theorem~\cite{ClementsLindstrom1969} and the Frankl-F\"{u}redi-Kalai theorem~\cite{FFK1988}, so any proof of Theorem \ref{thm:CruxPart} must necessarily contain the ``difficult'' parts of any proofs of these two classic theorems. A quick look at the existing proofs of these two theorems would convince the reader that a long list of lemmas is expected, thus it should not come as a surprise that our proof of Theorem \ref{thm:CruxPart} is indeed quite long and is accompanied by eight lemmas.

We first introduce some auxillary notation. For each $x\in X_{\boldsymbol{\lambda}}$, let $\tau(x)$ be the unique integer in $[n]$ such that $x\in X_{\tau(x)}$, and let $\hat{\tau}(x)$ be the unique integer in $[\lambda_{\tau(x)}]$ such that $x = x_{\tau(x), \hat{\tau}(x)}$. If $X^{\prime} \subseteq X_{\boldsymbol{\lambda}}$, then let $\max_{\leq_{r\ell}} X^{\prime}$ be the largest element of $X^{\prime}$ in the revlex order, and define $\min_{\leq_{r\ell}} X^{\prime}$ analogously. Given a monomial $m$ in $W$, define $\sigma(m) := \{\tau(x): x\in \supp(m)\} \subseteq [n]$. Whenever we say $m$ is the largest (or smallest) monomial satisfying certain conditions, it is always with respect to the revlex order. If $\deg(m) = d$ and $m$ is not the smallest monomial in $W_d$, then there is some monomial $m^{\dag}$ in $W_d$ such that $m$ {\it covers} $m^{\dag}$ in the revlex order, i.e. there does not exist any $m^{\prime} \in \{W_d\}$ satisfying $m>_{r\ell} m^{\prime} >_{r\ell} m^{\dag}$. Such a monomial $m^{\dag}$ is uniquely determined by $m$, and we reserve the superscript `$\dag$' for this cover relation.

Let $p$ be the smallest monomial in $W_d$ satisfying $p\not\in A$ and $p^{\dag} \in A$. For each $x\in X_{\boldsymbol{\lambda}}$, let $\rho(x)$ be the largest divisor of $p$ whose support is contained in $\{x^{\prime} \in X_{\boldsymbol{\lambda}}: x^{\prime} >_{r\ell} x\}$. Let $y_0 := \max_{\leq_{r\ell}}(\supp(p))$, and inductively define $y_i = y_{i-1}^{\dag}$ for $i\geq 1$. Let $j$ be the unique integer in $[d]$ such that $y_i^{a_{\tau(y_i)}}$ divides $p$ for every $i\in [j-1]$ (if any), and $y_j^{a_{\tau(y_j)}}$ does not divide $p$. Let $z := \max_{\leq_{r\ell}}\big\{x\in X_{\boldsymbol{\lambda}}: x\leq_{r\ell} y_j, \frac{p}{\rho(x)}\cdot x \neq 0\big\}$, let $p_0 := \rho(z)$, and observe that $z$ is well-defined since $p$ is not the smallest monomial in $W_d$. In fact, the definition of the revlex order tells us that $p^{\dag}$ is the largest monomial in $W_d$ divisible by $\frac{p}{p_0}\cdot z$. This means we can factorize $p^{\dag} = \frac{p}{p_0}\cdot z\cdot \widetilde{p}_0$, where $\widetilde{p}_0$ is the largest monomial of $W$ of degree $\deg(p_0)-1$ such that $\frac{p}{p_0}\cdot z\cdot \widetilde{p}_0 \neq 0$. For any variables $y, y^{\prime} \in X_{\boldsymbol{\lambda}}$ that divide $\widetilde{p}_0$, the maximality of $\widetilde{p}_0$ tells us that $\tau(y) = \tau(y^{\prime})$ if and only if $y = y^{\prime}$, thus $\supp(\widetilde{p}_0) = \{x_{i,1}: i\in \sigma(\widetilde{p}_0)\}$.

Suppose $m$ is a monomial in $\{W_d\}\backslash \{A\}$ such that $m\geq_{r\ell} p$. If $\partial(\{m\}) \subseteq \{\partial(A)\}$, then by letting $A^{\prime} := \Spank((\{A\}\backslash \{p^{\dag}\}) \cup\{m\})$, we get $|\partial(A)| \geq |\partial(A^{\prime})|$ and we are done. So for each such $m$, assume henceforth that $\partial(\{m\}) \not\subseteq \{\partial(A)\}$, and define the (non-empty) set $\Gamma_m := \{x\in \supp(m): \frac{m}{x} \not\in \{\partial(A)\}\}$.

Next, let $p_{\min}$ be the smallest monomial in $A$, and let $q$ be any monomial in $W_d$ satisfying $p_{\min} \leq_{r\ell} q \leq_{r\ell} p^{\dag}$. The minimality of $p$ implies $q\in A$, so since $A$ is quasi-compressed and $n>1$, the fact that $p\not\in A$ forces $p_{X_t} \neq q_{X_t}$ for all $t\in [n]$. In particular, when $q = p^{\dag}$, the factorization $p^{\dag} = \frac{p}{p_0}\cdot z\cdot \widetilde{p}_0$ implies $\sigma(p_0) \cup \sigma(z) \cup \sigma(\widetilde{p}_0) = [n]$.

\begin{lemma}\label{lemma:Intersection=>at>1}
$\{i\in [n]: a_i=1\} \subseteq \sigma(p_0) \cup \sigma(z)$.
\end{lemma}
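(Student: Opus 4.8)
The plan is to argue by contradiction: suppose some $i \in [n]$ with $a_i = 1$ lies outside $\sigma(p_0) \cup \sigma(z)$. We must derive a contradiction, and the natural place to look is the quasi-compression hypothesis together with the factorization $p^{\dag} = \frac{p}{p_0}\cdot z\cdot \widetilde{p}_0$ and the fact (established just before the lemma) that $\sigma(p_0) \cup \sigma(z) \cup \sigma(\widetilde{p}_0) = [n]$. So if $i \notin \sigma(p_0) \cup \sigma(z)$, then necessarily $i \in \sigma(\widetilde{p}_0)$, meaning $x_{i,1}$ divides $\widetilde{p}_0$ (recall $\supp(\widetilde{p}_0) = \{x_{i,1} : i \in \sigma(\widetilde{p}_0)\}$). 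Since $a_i = 1$, the variable $x_{i,1}$ occurs in $p^{\dag}$ exactly once, contributed entirely by $\widetilde{p}_0$; in particular $x_{i,1} \nmid \frac{p}{p_0}\cdot z$, and since $i \notin \sigma(z)$ we have $z \ne x_{i,1}$, while $i \notin \sigma(p_0)$ gives $x_{i,1} \nmid p_0 = \rho(z)$, hence (as $p_0$ is the part of $p$ on variables $>_{r\ell} z$ and $a_i=1$) the variable $x_{i,1}$ does not divide $p$ at all, or divides the part of $p$ on variables $\le_{r\ell} z$.

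**The key comparison.** I would now compare $p$ with the monomial $q := p^{\dag}$ using the quasi-compression fact that $p_{X_t} \ne q_{X_t}$ for all $t \in [n]$ — in particular for $t = i$. Since $a_i = 1$, both $p_{X_i}$ and $(p^{\dag})_{X_i}$ are either $1$ or a single variable $x_{i,k}$; and $(p^{\dag})_{X_i} = x_{i,1}$ from the previous paragraph. So $p_{X_i} \ne x_{i,1}$, i.e.\ $p_{X_i} \in \{1\} \cup \{x_{i,k} : k \ge 2\}$. The aim is to show this contradicts the minimality/definition of $p$, or more precisely contradicts the construction of $z$ and $p_0$: I want to exhibit a monomial strictly between $p^{\dag}$ and $p$ in the revlex order that is divisible by an appropriate translate, or show that $\frac{p}{\rho(x_{i,1})}\cdot x_{i,1}$ is a legitimate nonzero monomial that should have been selected as (or before) $z$. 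The point is that $x_{i,1}$ is the $r\ell$-largest variable in color class $i$, so if $p_{X_i}$ is $1$ or $x_{i,k}$ with $k\ge 2$ (and $a_i = 1$ makes these the only options), replacing that part by $x_{i,1}$ strictly increases the monomial in revlex within that color, yet stays nonzero — and one checks $x_{i,1} \le_{r\ell} y_j$ must hold, so $x_{i,1}$ would be a candidate in the set defining $z$, forcing $z \ge_{r\ell}$ something incompatible with $i \notin \sigma(z)$ and with $i \notin \sigma(p_0)$.

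**The main obstacle.** The delicate point — and what I expect to be the hard step — is pinning down exactly why $x_{i,1} \le_{r\ell} y_j$, i.e.\ why the "bad" variable $x_{i,1}$ in color $i$ is not $r\ell$-larger than the threshold variable $y_j$ past which $p$ fails to be saturated. This requires carefully unwinding the definitions of the chain $y_0 >_{r\ell} y_1 >_{r\ell} \cdots$ (the downward covers of $\max_{\le_{r\ell}}\supp(p)$), of $j$ (first index where $y_j^{a_{\tau(y_j)}} \nmid p$), and of $z = \max_{\le_{r\ell}}\{x \le_{r\ell} y_j : \frac{p}{\rho(x)}x \ne 0\}$. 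One has to rule out the scenario where $x_{i,1} >_{r\ell} y_j$, which would mean $x_{i,1}$ sits "above" the saturation threshold; in that regime $a_i = 1$ forces $x_{i,1}$ to actually divide $p_{\rho(y_j)}$-part, giving $i \in \sigma(p_0)$ after all (since $\rho$ is monotone and $x_{i,1} >_{r\ell} z$ as $z \le_{r\ell} y_j$), contradicting the assumption $i \notin \sigma(p_0)$. So the argument splits into the case $x_{i,1} >_{r\ell} y_j$ (handled by showing $i \in \sigma(p_0)$, contradiction) and the case $x_{i,1} \le_{r\ell} y_j$ (handled by showing $x_{i,1}$ is a valid candidate for $z$, and then $z \ge_{r\ell} x_{i,1}$ combined with $a_i = 1$ forces $i \in \sigma(z) \cup \sigma(p_0)$, contradiction). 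Once both cases close, the lemma follows. I would also keep an eye on the boundary subtlety that $p_0 = \rho(z)$ contains precisely the part of $p$ on variables $>_{r\ell} z$, so "$x_{i,1}$ divides $p$ and $x_{i,1} >_{r\ell} z$" is literally the statement $i \in \sigma(p_0)$ — this bookkeeping is what makes the two cases exhaustive.
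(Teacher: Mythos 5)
Your proposal takes a genuinely different route from the paper's. The paper's proof relies on the standing assumption (set up just before the lemma) that $\Gamma_p \neq \emptyset$: it picks some $y \in \Gamma_p$, forms $m := \tfrac{p}{y}\cdot x_{t,1}$, checks that $m >_{r\ell} p^{\dag}$ while $m_{X_t} = (p^{\dag})_{X_t} = x_{t,1}$, and observes that $m \notin A$ yet $p^{\dag}\in A$, contradicting quasi-compression in color $t$. Your route never invokes $\Gamma_p$; instead it tries to derive a contradiction purely from the definitions of the chain $y_0>_{r\ell}\cdots$, of $j$, of $\rho$, and the maximality of $z$.

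The bookkeeping route has real gaps. First, the observation "$p_{X_i}\neq x_{i,1}$" can and must be sharpened to $p_{X_i}=1$: since $x_{i,1}\mid\widetilde{p}_0$ and $i\notin\sigma(p_0)\cup\sigma(z)$, the monomial $\tfrac{p}{p_0}\cdot x_{i,1}$ divides $p^{\dag}$ and is nonzero, so $a_i=1$ forces $\deg\big((\tfrac{p}{p_0})_{X_i}\big)=\deg(p_{X_i})=0$, i.e.\ $i\notin\sigma(p)$. This wrecks your case $x_{i,1}>_{r\ell}y_j$, whose intended conclusion was $i\in\sigma(p_0)$ --- impossible once $i\notin\sigma(p)$. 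One can salvage the subcase $y_j<_{r\ell}x_{i,1}\leq_{r\ell}y_0$ (then $x_{i,1}$ lies in the chain $y_0,\dots,y_{j-1}$, giving the contradiction $x_{i,1}=x_{i,1}^{a_i}\mid p$), but the subcase $x_{i,1}>_{r\ell}y_0$ is not ruled out: $\widetilde{p}_0$ is chosen greedily among the $r\ell$-largest available variables and need not be supported inside $\supp(p)$. Second, in the case $x_{i,1}\leq_{r\ell}y_j$ you correctly get $z\geq_{r\ell}x_{i,1}$ (indeed $x_{i,1}$ is a valid candidate for $z$ once $p_{X_i}=1$), but $z>_{r\ell}x_{i,1}$ with $\tau(z)\neq i$ is perfectly consistent with $i\notin\sigma(z)\cup\sigma(p_0)$, so no contradiction follows from "$z\geq_{r\ell}x_{i,1}$ and $a_i=1$" alone. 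In short, the purely definitional analysis of $z$ and $p_0$ does not close either case; the paper's use of $\Gamma_p$ to manufacture a monomial violating quasi-compression appears to be the missing leverage, and your argument would need to bring that (or an equivalent) back in.
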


\begin{proof}
Suppose not, then since $\sigma(p_0) \cup \sigma(z) \cup \sigma(\widetilde{p}_0) = [n]$, there exists some $t\in [n]$ such that $a_t=1$ and $t\in \sigma(\widetilde{p}_0) \cap \big([n]\backslash(\sigma(p_0) \cup \sigma(z))\big)$. Note that $\supp(\widetilde{p}_0) = \{x_{i,1}: i\in \sigma(\widetilde{p}_0)\}$ implies $x_{t,1}$ divides $\widetilde{p}_0$, so the factorization $p^{\dag} = \frac{p}{p_0}\cdot z\cdot \widetilde{p}_0$ gives us $\frac{p}{p_0}\cdot x_{t,1} \neq 0$. Choose some $y\in \Gamma_p$. Since $t\not\in \sigma(p_0)$, we thus get $\frac{p}{y}\cdot x_{t,1} \neq 0$, hence $\frac{p}{y}\cdot x_{t,1} \not\in A$ by the definition of $\Gamma_p$. Next, write $m:= \frac{p}{y}\cdot x_{t,1}$. We check that $\min_{\leq_{r\ell}}\{y,z\}$ is the smallest variable whose exponents in $p^{\dag} = \frac{p}{p_0}\cdot z\cdot \widetilde{p}_0$ and $m$ are different, so the definition of the revlex order yields $p^{\dag} <_{r\ell} m$. Now, $a_t=1$ implies $m_{X_t} = (p^{\dag})_{X_t} = x_{t,1}$, so since $m\not\in A$, we get $\mathcal{C}_t(A) \neq A$, which contradicts the assumption that $A$ is quasi-compressed.
\end{proof}

\begin{lemma}\label{lemma:switchingVariables}
Let $m, m^{\prime}$ be monomials in $W_d$ such that $m^{\prime} = \frac{m}{x}\cdot x^{\prime}$ for some $x\in \supp(m)$ and $x^{\prime} \in X_{\boldsymbol{\lambda}}$ satisfying $x^{\prime} \geq_{r\ell} x$. If $n>2$ and $m\in A$, then $m^{\prime} \in A$.
\end{lemma}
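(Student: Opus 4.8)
The plan is to obtain the statement from a single application of quasi-compression, taken with respect to a color that the substitution $x\mapsto x'$ leaves untouched. If $x=x'$ then $m'=m$ and there is nothing to prove, so assume $x\neq x'$; since $x'\geq_{r\ell} x$, this forces $x'>_{r\ell} x$.

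Because $\{\tau(x),\tau(x')\}$ has at most two elements and $n>2$, we may fix an index $t\in[n]\setminus\{\tau(x),\tau(x')\}$; this is the only place the hypothesis $n>2$ enters. Replacing $x$ by $x'$ alters the monomial only in the colors $\tau(x)$ and $\tau(x')$, so $m$ and $m'$ have the same $X_t$-part. Writing $q:=m_{X_t}=m'_{X_t}$ (a monomial of $W$ with $\supp(q)\subseteq X_t$), the monomials $u:=m/q$ and $v:=m'/q$ lie in $\widehat{W}^{\langle t\rangle}_{d-\deg(q)}$, are both nonzero since $m,m'\in\{W_d\}$, and satisfy $v=\tfrac{u}{x}\cdot x'$ with $x,x'\in X_{\boldsymbol{\lambda}}-X_t$.

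Next I would record what quasi-compression buys: from $\mathcal{C}_t(A)=A$ together with the decomposition $A=\bigoplus_{q'} q'\cdot(A[t;q']/q')$ over all monomials $q'$ of $W$ with $\supp(q')\subseteq X_t$, one gets $\Revlex_{\widehat{W}^{\langle t\rangle}_{d-\deg(q')}}(A[t;q']/q')=A[t;q']/q'$ for every such $q'$; that is, each $A[t;q']/q'$ is a revlex-segment $\widehat{W}^{\langle t\rangle}_{d-\deg(q')}$-monomial space. Applying this with our $q$: since $m\in A$ has $X_t$-part $q$, we have $u=m/q\in\{A[t;q]/q\}$. Hence it remains only to verify that $v>_{r\ell} u$ in $\widehat{W}^{\langle t\rangle}$, for then the revlex-segment property of $A[t;q]/q$ gives $v\in\{A[t;q]/q\}$, and therefore $m'=qv\in A$, as desired.

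The comparison $v>_{r\ell} u$ is the only point demanding care, and I expect it to be routine rather than a genuine obstacle. The monomials $u$ and $v$ have identical exponents in every variable except $x$ and $x'$, and since both $x$ and $x'$ lie in $X_{\boldsymbol{\lambda}}-X_t$, the revlex order of $\widehat{W}^{\langle t\rangle}$ ranks them exactly as the ambient order does. As $x'>_{r\ell} x$ while $v$ carries one more copy of $x'$ and one fewer copy of $x$ than $u$, the definition of the revlex order yields $v>_{r\ell} u$ (equivalently: among the two variables at which $u$ and $v$ differ, the $<_{r\ell}$-smaller one is $x$, and the exponent of $x$ strictly decreases from $u$ to $v$, so $v<_{\ell ex} u$, i.e. $v>_{r\ell} u$). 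In short, the lemma is a ``one-compression'' statement; its substantive content is just the existence of the spare color $t$ — which is precisely why $n>2$ is required — together with the compatibility of revlex orders under the map $W_d\to\widehat{W}^{\langle t\rangle}$ that strips off $q$.
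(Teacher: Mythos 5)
Your proof is correct and follows essentially the same approach as the paper's (which simply notes that $n>2$ supplies an index $t$ with $m_{X_t}=m'_{X_t}$, that $x'\geq_{r\ell}x$ gives $m'\geq_{r\ell}m$, and then invokes $\mathcal{C}_t(A)=A$). You have merely unfolded the last step by making explicit that $\mathcal{C}_t(A)=A$ means each slice $A[t;q]/q$ is revlex-segment in $\widehat{W}^{\langle t\rangle}_{d-\deg q}$ and then checking the revlex comparison there, which is a faithful expansion of the paper's terse argument.
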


\begin{proof}
If $n>2$, then we can find some $t\in [n]$, distinct from $\tau(x)$ and $\tau(x^{\prime})$, such that $m_{X_t} = m^{\prime}_{X_t}$. Since $x^{\prime} \geq_{r\ell} x$ implies $m^{\prime}\geq_{r\ell} m$, it then follows from the assumption $\mathcal{C}_t(A) = A$ that $m\in A$ implies $m^{\prime} \in A$.
\end{proof}

\begin{lemma}\label{lemma:pmin-largestVar}
If $n>2$ and $a_n\neq 1$, then $\frac{p}{p_0}\cdot z\cdot x_{n,1}$ divides every $q\in \{W_d\}$ satisfying $p_{\min} \leq_{r\ell} q \leq_{r\ell} p^{\dag}$.
\end{lemma}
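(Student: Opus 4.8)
The plan is to argue by contradiction: suppose there is some $q \in \{W_d\}$ with $p_{\min} \leq_{r\ell} q \leq_{r\ell} p^{\dag}$ such that $\frac{p}{p_0}\cdot z\cdot x_{n,1}$ does not divide $q$. First, I would record what the minimality of $p$ buys us: since $q \geq_{r\ell} p_{\min}$ we have $q \in A$, and since $q \leq_{r\ell} p^{\dag} <_{r\ell} p$ we know $q \neq p$, and in fact the quasi-compressedness argument already used before the lemmas gives $p_{X_t}\neq q_{X_t}$ for all $t\in[n]$. The monomial $\frac{p}{p_0}\cdot z$ is, by construction, the common "prefix" of $p^\dag$ in the revlex order (recall $p^\dag = \frac{p}{p_0}\cdot z\cdot \widetilde p_0$ and $p^\dag$ is the largest monomial of $W_d$ divisible by $\frac{p}{p_0}\cdot z$); so $q \leq_{r\ell} p^\dag$ forces $\frac{p}{p_0}\cdot z$ to divide $q$ (any monomial not divisible by $\frac{p}{p_0}$, or divisible by $\frac p{p_0}$ but not by $\frac p{p_0}\cdot z$ with a strictly larger variable inserted, would either exceed $p^\dag$ or be handled by the definition of $z$ as the $\leq_{r\ell}$-maximal admissible variable). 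Hence the failure of divisibility must be exactly that $x_{n,1}$ does not divide $\frac{q}{(p/p_0)z}$, i.e. the exponent of $x_{n,1}$ in $q$ equals that in $\frac{p}{p_0}\cdot z$.

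Next I would use Lemma~\ref{lemma:switchingVariables} together with $n > 2$ to push $q$ upward in the revlex order toward $p^\dag$ while staying inside $A$. Concretely: among the variables dividing $\widetilde p_0 = (p^\dag)/((p/p_0)z)$, the variable $x_{n,1}$ is the $\leq_{r\ell}$-smallest one appearing (since $\supp(\widetilde p_0) = \{x_{i,1} : i\in\sigma(\widetilde p_0)\}$ and $x_{n,1} = \min_{\leq_{r\ell}} X_{\boldsymbol\lambda}$), so if $x_{n,1} \notin \supp(q/((p/p_0)z))$ then $q$ must contain, somewhere, a variable $x'$ strictly larger than $x_{n,1}$ in a slot where $p^\dag$ has a smaller variable — but $p^\dag$ is the revlex-largest monomial divisible by $\frac{p}{p_0}\cdot z$, and $q$ is divisible by $\frac p{p_0}\cdot z$, so $q \leq_{r\ell} p^\dag$ with equality of the $\frac p{p_0}z$-part forces $q$'s "remaining part" to be $\leq_{r\ell} \widetilde p_0$; repeatedly replacing a variable of $q$ by the larger corresponding variable of $p^\dag$ via Lemma~\ref{lemma:switchingVariables} (legal because $n>2$ lets us fix a third color), I climb from $q$ up to $p^\dag$ inside $A$, concluding in particular that $p^\dag \in A$ — which is automatic — but more usefully that every monomial of $W_d$ between $q$ and $p^\dag$ lies in $A$. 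The real payoff is to produce from $q$ a monomial $m$ with $m \geq_{r\ell} p$ but $m \notin A$: I take $m = \frac{p}{y}\cdot x_{n,1}$ for a suitable $y\in\Gamma_p$ (using $a_n\neq 1$ so that $x_{n,1}^{a_n}$-type obstructions do not kill this, and so that the color-$n$ exponent can actually be increased), analogous to the construction in the proof of Lemma~\ref{lemma:Intersection=>at>1}; by the revlex comparison (the smallest variable where $m$ and $p^\dag$ differ) one checks $m >_{r\ell} p^\dag$, and since $n > 2$ one can find $t \neq \tau(y), n$ with $m_{X_t} = p^\dag_{X_t} = q_{X_t}$ on a tail, contradicting $\mathcal{C}_t(A) = A$ once we know the relevant monomial between $m$ and the $A$-membership threshold is forced to be in $A$.

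The step I expect to be the main obstacle is the careful bookkeeping of the revlex comparisons — precisely identifying, in terms of $y_j$, $z$, $p_0$, and $\widetilde p_0$, the smallest variable at which $q$ (or the auxiliary $m$) and $p^\dag$ disagree, and checking that the exponent constraints coming from $\mathbf{a}$ (the caps $a_i$, and the hypothesis $a_n \neq 1$) are exactly what is needed for the switching and the insertion of $x_{n,1}$ to be well-defined and nonzero. In particular I must verify that $\frac p y \cdot x_{n,1}\neq 0$: this is where $a_n\neq 1$ enters, guaranteeing there is room in color $n$, together with the fact (from $\sigma(p_0)\cup\sigma(z)\cup\sigma(\widetilde p_0) = [n]$ and Lemma~\ref{lemma:Intersection=>at>1}) that the color-$n$ slot of $p$ is not already saturated in a way that forbids the swap. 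Once these nonvanishing and comparison facts are pinned down, the quasi-compression hypothesis $\mathcal{C}_t(A)=A$ for the auxiliary color $t$ delivers the contradiction, completing the proof.
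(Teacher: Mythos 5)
Your proposal has two genuine gaps, the first of which is a load-bearing error.

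First, the claim that $q \leq_{r\ell} p^{\dag}$ alone ``forces $\frac{p}{p_0}\cdot z$ to divide $q$'' is not correct, and the parenthetical justification offered for it does not go through. In revlex, being $\leq_{r\ell} p^{\dag}$ tells you only about the smallest variable at which $q$ and $p^{\dag}$ disagree; it does not force $q$ to contain the revlex-small ``tail'' $\frac{p}{p_0}\cdot z$ of $p^{\dag}$. The divisibility genuinely needs the \emph{lower} bound $q \geq_{r\ell} p_{\min}$: the paper introduces an auxiliary monomial $p_0'$ (the revlex-smallest degree-$(\deg p_0 - 1)$ monomial supported in $\{x >_{r\ell} z\}$ with $\frac{p}{p_0}\cdot z\cdot p_0' \neq 0$), shows $\frac{p}{p_0}\cdot z\cdot p_0' \notin A$ by repeated use of Lemma~\ref{lemma:switchingVariables} (this is where $n>2$ and $p\notin A$ are consumed), and only then concludes $p_{\min} >_{r\ell} \frac{p}{p_0}\cdot z\cdot p_0'$, from which the divisibility follows. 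Your sketch skips this entirely.

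Second, the revlex orientation is reversed in your argument. Since the fixed linear order has $x_{n,1}$ as the \emph{smallest} variable, $x_{n,1}$ is the $\leq_{r\ell}$-\emph{largest} variable in $X_{\boldsymbol\lambda}$, not the smallest as you assert; the sentence ``$q$ must contain, somewhere, a variable $x'$ strictly larger than $x_{n,1}$'' is therefore vacuous. This confusion undermines the pushing-up step, and the proposed auxiliary construction $m = \frac{p}{y}\cdot x_{n,1}$ with $y\in\Gamma_p$ is aimed at the wrong target: the paper's second step does not manufacture a monomial outside $A$; it derives a contradiction with the already-established fact that every $q$ in $[p_{\min}, p^{\dag}]$ satisfies $q_{X_t}\neq p_{X_t}$ for all $t$. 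Concretely, from $x_{n,1}\nmid q$ and $\lambda_n = 1$ one gets $x_{n,1}\mid p$ with $b := \deg(p_{X_n}) \leq \deg(p_0)$; if $b < \deg(p_0)$ the paper builds a monomial $\frac{p}{p_0}\cdot z\cdot\frac{\widetilde q}{q'}\cdot x_{n,1}^b$ lying in the interval $[p_{\min}, p^{\dag}]$ whose $X_n$-part equals $p_{X_n}$, a contradiction, and if $b = \deg(p_0)$ then $p_0 = x_{n,1}^b$ and $\widetilde p_0 = x_{n,1}^{b-1}$ force $\sigma(p_0)\cup\sigma(z)\cup\sigma(\widetilde p_0) = [n]$ to fail when $n>2$. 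Your sketch does not carry out either branch of this dichotomy, and the route via $\mathcal{C}_t(A) = A$ on an auxiliary $m$ is not shown to reach the desired conclusion that $x_{n,1}$ divides $q$.
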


\begin{proof}
Let $p_0^{\prime}$ be the smallest monomial in $W$ of degree $\deg(p_0)-1$, whose support is contained in $\rho(z)$, such that $\frac{p}{p_0}\cdot z \cdot p_0^{\prime} \neq 0$. Clearly such a monomial $p_0^{\prime}$ exists, since $\widetilde{p}_0$ is the largest monomial satisfying the same properties. We first show that $\frac{p}{p_0}\cdot z \cdot p_0^{\prime} \not\in A$. Note that $\frac{p}{p_0}\cdot z \cdot p_0^{\prime} \leq_{r\ell} \frac{p}{p_0}\cdot z \cdot \widetilde{p}_0 = p^{\dag} <_{r\ell} p = \frac{p}{p_0}\cdot p_0$, hence $z\cdot p_0^{\prime} <_{r\ell} p_0$. Let $v_1 \leq_{r\ell} v_2 \leq_{r\ell} \dots \leq_{r\ell} v_{\deg(p_0)} = y_0$ be the $\deg(p_0)$ uniquely determined variables such that $p_0 = v_1v_2\cdots v_{\deg(p_0)}$. Similarly, let $z = v_1^{\prime} \leq_{r\ell} v_2^{\prime} \leq_{r\ell} \dots \leq_{r\ell} v_{\deg(p_0)}^{\prime}$ be the $\deg(p_0)$ uniquely determined variables such that $z\cdot p_0^{\prime} = v_1^{\prime}v_2^{\prime}\cdots v_{\deg(p_0)}^{\prime}$. The minimality of $p_0^{\prime}$ implies $v_i^{\prime} \leq v_i$ for each $i\in [\deg(p_0)]$, so since $p\not\in A$, the repeated use of Lemma \ref{lemma:switchingVariables} yields $\frac{p}{p_0}\cdot z \cdot p_0^{\prime} \not\in A$ as claimed.

Consequently, the minimality of $p_{\min}$ implies $p_{\min} >_{r\ell} \frac{p}{p_0}\cdot z \cdot p_0^{\prime}$, so in particular, $\frac{p}{p_0}\cdot z$ divides every monomial $q$ in $W_d$ satisfying $p_{\min} \leq_{r\ell} q \leq_{r\ell} p^{\dag}$, and we can factorize each such $q$ by $q = \frac{p}{p_0}\cdot z \cdot \widetilde{q}$, where $\widetilde{q}$ is a monomial in $W$ of degree $\deg(p_0)-1$ satisfying $\supp(\widetilde{q}) \subseteq \{x\in X_{\boldsymbol{\lambda}}: x >_{r\ell} z\}$.

Suppose we can choose some $q$ that is not divisible by $x_{n,1}$. Since $a_n\neq 1$ implies $\lambda_n=1$, we get $q_{X_n} = 1$, thus $p_{X_n} \neq 1$, i.e. $x_{n,1}$ divides $p$. Let $b = \deg(p_{X_n})$ (i.e. $p_{X_n} = x_{n,1}^b = y_0^b$), and note that $b\leq \deg(p_0)$. Let $q^{\prime}$ be the largest degree $b$ divisor of $q$. If $b<\deg(p_0)$, then $q^{\prime}$ divides $\widetilde{q}$, hence $\frac{p}{p_0}\cdot z \cdot \frac{\widetilde{q}}{q^{\prime}} \cdot x_{n,1}^b$ is a monomial in $W_d$ satisfying $p_{\min} \leq_{r\ell} \frac{p}{p_0}\cdot z \cdot \frac{\widetilde{q}}{q^{\prime}} \cdot x_{n,1}^b <_{r\ell} p$, so $\frac{p}{p_0}\cdot z \cdot \frac{\widetilde{q}}{q^{\prime}} \cdot x_{n,1}^b \in A$, which gives the contradiction $(\frac{p}{p_0}\cdot z \cdot \frac{\widetilde{q}}{q^{\prime}} \cdot x_{n,1}^b)_{X_n} = p_{X_n} = x_{n,1}^b$. Therefore, the equality $b = \deg(p_0)$ must hold, i.e. $p_0 = x_{n,1}^b$, which forces $\widetilde{p}_0 = x_{n,1}^{b-1}$. However, the fact that $\sigma(p_0) \cup \sigma(z) \cup \sigma(\widetilde{p}_0) = [n]$ would contradict the assumption $n>2$.
\end{proof}

\begin{lemma}\label{lemma:Wmixed=>p=p0}
If $p\neq p_0$, then $W$ is hinged, $a_1\neq 1$, and $\tau(z) = 1$.
\end{lemma}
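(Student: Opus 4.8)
The plan is to assume $p\neq p_0$ throughout and deduce the three stated conclusions. The key point is a reduction: it suffices to prove that $a_{\tau(z)}\neq 1$ and $\tau(z)=1$. Indeed, the conclusion $a_1\neq 1$ then reads $a_1=a_{\tau(z)}\neq 1$, the conclusion $\tau(z)=1$ is immediate, and for the conclusion ``$W$ is hinged'' note that if $W$ is hinged there is nothing to prove, whereas if $W$ is mixed---say with the integer $r$ as in the definition of ``mixed''---then $a_1\neq 1$ forces $1\notin[r]$, hence $r=0$, hence $\boldsymbol{\lambda}={\bf 1}_n$ and $a_1\leq\dots\leq a_n$, so that $W$ is a Clements-Lindstr\"{o}m ring and in particular hinged.

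For the setup, recall that $p_0=\rho(z)$ is the part of $p$ supported on the variables $>_{r\ell}z$, so $p\neq p_0$ means $\supp(p)$ contains a variable $\leq_{r\ell}z$; write $w:=\min_{\leq_{r\ell}}\supp(p)$, so $w\leq_{r\ell}z\leq_{r\ell}y_j$. We are in the running case $\partial(\{p\})\not\subseteq\{\partial(A)\}$, so $\Gamma_p\neq\emptyset$; fixing $y\in\Gamma_p$, we have $\tfrac{p}{y}\cdot x\notin A$ for every variable $x$ with $\tfrac{p}{y}\cdot x\neq 0$. The proof method mirrors that of Lemma~\ref{lemma:Intersection=>at>1}: if one of the two assertions fails, we will exhibit a monomial $m$ with $m\notin A$, $m_{X_{\tau(z)}}=(p^{\dag})_{X_{\tau(z)}}$ and $m>_{r\ell}p^{\dag}$; since $p^{\dag}\in A$, this contradicts the fact (part of ``$A$ is quasi-compressed'') that $A$ is $(X_{\boldsymbol{\lambda}}-X_{\tau(z)})$-compressed. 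Besides this, we freely use $\sigma(p_0)\cup\sigma(z)\cup\sigma(\widetilde p_0)=[n]$, $\supp(\widetilde p_0)=\{x_{i,1}:i\in\sigma(\widetilde p_0)\}$, Lemma~\ref{lemma:Intersection=>at>1}, and the inequality $a_1\leq\dots\leq a_n$ (valid whether $W$ is mixed or hinged).

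To prove $a_{\tau(z)}\neq 1$, suppose $a_{\tau(z)}=1$. Comparing color-$\tau(z)$ degrees in $p^{\dag}=\tfrac{p}{p_0}\cdot z\cdot\widetilde p_0\neq 0$, and using that the single factor $z$ already saturates color $\tau(z)$, we get $(\tfrac{p}{p_0})_{X_{\tau(z)}}=(\widetilde p_0)_{X_{\tau(z)}}=1$; hence $(p^{\dag})_{X_{\tau(z)}}=z$, $z\nmid p$, $w<_{r\ell}z$, and $\tau(w)\neq\tau(z)$. From $y\in\Gamma_p$ one then builds a monomial $m$ by replacing one variable of $p$ (lying in color $\tau(z)$, or in color $\tau(y)$) by $z$; the facts $z\nmid p$ and $a_{\tau(z)}=1$ guarantee $m\neq 0$, the choice $y\in\Gamma_p$ guarantees $m\notin A$, one arranges $m_{X_{\tau(z)}}=z=(p^{\dag})_{X_{\tau(z)}}$, and $m>_{r\ell}p^{\dag}$ is checked by locating the smallest-in-revlex variable at which $m$ and $p^{\dag}$ differ, exactly as in Lemma~\ref{lemma:Intersection=>at>1}. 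This contradiction gives $a_{\tau(z)}\neq 1$. To prove $\tau(z)=1$, suppose $\tau(z)=t\geq 2$; then, since $a_t\neq 1$ by the previous step, the definition of ``mixed'' (resp.\ ``hinged'') forces $\lambda_t=1$, so $z=x_{t,1}$, and the defining property $\tfrac{p}{\rho(z)}\cdot z\neq 0$ of $z$ shows the exponent of $x_{t,1}$ in $p$ is $<a_t$. Then $m:=\tfrac{p}{y}\cdot x_{t,1}$ satisfies $m\neq 0$ and $m\notin A$; the required $m_{X_t}=(p^{\dag})_{X_t}$ and $m>_{r\ell}p^{\dag}$ are verified much as before (the equality of color-$t$ parts hinging on whether $t\in\sigma(\widetilde p_0)$), again contradicting $(X_{\boldsymbol{\lambda}}-X_t)$-compression.

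The step I expect to be the main obstacle is the construction and verification of $m$ in each case: the detailed bookkeeping needed to ensure that the ``switched'' monomial lands in the same color-$\tau(z)$ fiber as $p^{\dag}$ and strictly above it in the revlex order. One must track simultaneously whether $z\in\supp(p)$, whether $\tau(z)\in\sigma(\widetilde p_0)$, and the mixed-versus-hinged dichotomy, and in the borderline subcases a slightly different choice of the variable being switched (or of $y\in\Gamma_p$) is needed; the revlex comparison itself is routine once organized around the smallest-in-revlex differing variable, as in the proof of Lemma~\ref{lemma:Intersection=>at>1}.
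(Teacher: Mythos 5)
The proposal takes a genuinely different route from the paper, and that route has a gap.

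The paper's proof is short and does not construct any new monomial. It fixes $y\in\supp\bigl(\tfrac{p}{p_0}\bigr)$ (which is non-empty precisely because $p\neq p_0$), observes that all variables of $\supp(p_0)\cup\supp(\widetilde p_0)$ are $>_{r\ell}z$ and uses $\sigma(p_0)\cup\sigma(z)\cup\sigma(\widetilde p_0)=[n]$ to conclude $y\leq_{r\ell}z\leq_{r\ell}x_{1,1}$; then, \emph{if} $a_1=1$, the mixed/hinged structure forces $a_{\tau(y)}=1$. Since $y$ divides both $p$ and $p^\dag=\tfrac{p}{p_0}\cdot z\cdot\widetilde p_0$ (because $y\mid\tfrac{p}{p_0}$), the equality $a_{\tau(y)}=1$ forces $p_{X_{\tau(y)}}=(p^\dag)_{X_{\tau(y)}}=y$, contradicting the already-established fact that $p_{X_t}\neq(p^\dag)_{X_t}$ for every $t$. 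Hence $a_1\neq1$, and the remaining two conclusions drop out from the definitions of mixed and hinged. The crucial point is that the needed variable is taken from $\supp(\tfrac{p}{p_0})$, \emph{not} from $\Gamma_p$, and no revlex comparison of monomials is required.

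Your proposal instead proves $a_{\tau(z)}\neq1$ and $\tau(z)=1$ by taking $y\in\Gamma_p$, forming a monomial $m$ by swapping a variable for $z$ (or for $x_{t,1}$), and invoking $(X_{\boldsymbol{\lambda}}-X_{\tau(z)})$-compression, i.e.\ the pattern of Lemma~\ref{lemma:Intersection=>at>1}. This pattern requires $m>_{r\ell}p^\dag$, and that is exactly where the argument breaks. The only way to keep ``$m\notin A$'' is to delete the chosen $y\in\Gamma_p$ itself, but you have no control over where $y$ sits relative to $z$. If the only elements of $\Gamma_p$ satisfy $y>_{r\ell}z$ (so $y\mid p_0$), then $m=\tfrac{p}{y}\cdot z=\tfrac{p}{p_0}\cdot z\cdot\tfrac{p_0}{y}$, and the maximality defining $\widetilde p_0$ gives $\tfrac{p_0}{y}\leq_{r\ell}\widetilde p_0$, hence $m\leq_{r\ell}p^\dag$ --- the wrong direction, so no contradiction is obtained. (In the subcase $\tau(y)\neq\tau(z)$ and $p_{X_{\tau(z)}}\neq1$, $m$ is simply $0$, and deleting the variable $p_{X_{\tau(z)}}$ instead is only legitimate if that variable happens to lie in $\Gamma_p$, which is not guaranteed.) The same issue recurs in your proof of $\tau(z)=1$: $m:=\tfrac{p}{y}\cdot x_{t,1}$ again lands $\leq_{r\ell}p^\dag$ whenever $y>_{r\ell}z$. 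The fix is to abandon $\Gamma_p$ here altogether and argue as the paper does: pick any $y\in\supp(\tfrac{p}{p_0})$, use $y\mid p$ and $y\mid p^\dag$ together with $a_{\tau(y)}=1$ to contradict $p_{X_{\tau(y)}}\neq(p^\dag)_{X_{\tau(y)}}$; no quasi-compression step is needed in this lemma.
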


\begin{proof}
Suppose there exists some $y\in \supp(\frac{p}{p_0})$. The definition of $p_0$ tells us that $z$ is strictly smaller than every variable in $\supp(p_0)$ or $\supp(\widetilde{p}_0)$, so $\sigma(p_0) \cup \sigma(z) \cup \sigma(\widetilde{p}_0) = [n]$ implies $y\leq_{r\ell} z\leq_{r\ell} x_{1,1}$. If $a_1 = 1$, then whether $W$ is mixed or hinged, it follows from $y\leq_{r\ell} x_{1,1}$ that $a_{\tau(y)} = 1$, which forces the contradiction $p_{X_{\tau(y)}} = (p^{\dag})_{X_{\tau(y)}} = y$. Thus $a_1 \neq 1$, therefore $W$ is hinged and $\tau(z) = 1$.
\end{proof}

\begin{lemma}\label{lemma:largestvardividingp}
If $a_n\neq 1$ and $x_{n,1}^{a_n}$ divides $p$, then $p_0 = x_{n,1}^{a_n}$, $z = x_{n-1,1}$, and $n=2$.
\end{lemma}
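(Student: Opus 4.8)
The plan is to pin down $p_0$, $z$ and $\widetilde{p}_0$ explicitly and then read off $n=2$ from the identity $\sigma(p_0)\cup\sigma(z)\cup\sigma(\widetilde{p}_0)=[n]$. I would first record the basic consequences of the hypothesis. Since $x_{n,1}$ is the smallest variable of $X_{\boldsymbol{\lambda}}$ in the fixed linear order, it is the largest in the revlex order, so $x_{n,1}^{a_n}\mid p$ forces $y_0=\max_{\leq_{r\ell}}(\supp(p))=x_{n,1}$; moreover $a_n\neq 1$ forces $\lambda_n=1$ whether $W$ is mixed or hinged (for mixed, $a_n\neq 1$ means $n>r$), so the exponent of $x_{n,1}$ in $p$ is exactly $a_n$, i.e.\ $p_{X_n}=x_{n,1}^{a_n}$ and $d\geq a_n\geq 2$. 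Inductively $y_i=x_{n-i,1}$ for $0\leq i\leq n-1$. Finally $z\leq_{r\ell}y_j<_{r\ell}y_0=x_{n,1}$, so $x_{n,1}>_{r\ell}z$, and therefore $x_{n,1}^{a_n}\mid p_0=\rho(z)$, giving $\deg(p_0)\geq a_n$.

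The crucial step is to show $p_0=x_{n,1}^{a_n}$ using that $A$ is $(X_{\boldsymbol{\lambda}}-X_n)$-compressed. Write $p^{\dag}=\tfrac{p}{p_0}\cdot z\cdot\widetilde{p}_0$. Since $\tfrac{p}{p_0}$ is supported on variables $\leq_{r\ell}z$ and $z\neq x_{n,1}$, the monomial $\tfrac{p}{p_0}z$ is not divisible by $x_{n,1}$; a one-line swap argument (if the $x_{n,1}$-exponent of $\widetilde{p}_0$ were below $\min\{\deg(p_0)-1,\,a_n\}$, replace a non-$x_{n,1}$ variable of $\widetilde{p}_0$ by $x_{n,1}$, which keeps $\tfrac{p}{p_0}z\widetilde{p}_0\neq 0$ and, by the fact that $x^{\prime}\geq_{r\ell}x$ implies $\tfrac{m}{x}x^{\prime}\geq_{r\ell}m$, strictly increases $\widetilde{p}_0$ in revlex, contradicting its maximality) then shows that the $x_{n,1}$-exponent of $\widetilde{p}_0$, hence of $p^{\dag}$, equals $\min\{\deg(p_0)-1,\,a_n\}$. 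So if $\deg(p_0)\geq a_n+1$, then $p^{\dag}_{X_n}=x_{n,1}^{a_n}=p_{X_n}$; as $\mathcal{C}_n(A)=A$, the space $A[n;x_{n,1}^{a_n}]/x_{n,1}^{a_n}$ is a revlex-segment $\widehat{W}^{\langle n\rangle}_{d-a_n}$-monomial space containing $p^{\dag}/x_{n,1}^{a_n}$, hence it also contains the revlex-larger monomial $p/x_{n,1}^{a_n}$ (revlex comparison of monomials with the same $x_{n,1}$-exponent is unchanged by dividing out $x_{n,1}^{a_n}$), forcing $p\in A[n;x_{n,1}^{a_n}]\subseteq A$, a contradiction. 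Therefore $\deg(p_0)=a_n$, so $p_0=x_{n,1}^{a_n}$ and $\widetilde{p}_0=x_{n,1}^{a_n-1}$.

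Now $p_0=\rho(z)=x_{n,1}^{a_n}$ says that $x_{n,1}$ is the only variable of $\supp(p)$ that is $>_{r\ell}z$. If $j\geq 2$, then $z\leq_{r\ell}y_j=x_{n-j,1}<_{r\ell}x_{n-1,1}$, yet $y_1^{a_{\tau(y_1)}}=x_{n-1,1}^{a_{n-1}}\mid p$ would put $x_{n-1,1}$ into $\supp(p)$ with $x_{n-1,1}>_{r\ell}z$, a contradiction; so $j=1$ and $y_j=x_{n-1,1}$. To find $z$, I test the largest candidate $x=x_{n-1,1}$ in the definition of $z$: here $\rho(x_{n-1,1})=x_{n,1}^{a_n}$, and $\tfrac{p}{\rho(x_{n-1,1})}\cdot x_{n-1,1}\neq 0$ exactly when $\deg(p_{X_{n-1}})+1\leq a_{n-1}$. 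When $\lambda_{n-1}=1$, $j=1$ gives $\deg(p_{X_{n-1}})=$ (the exponent of $x_{n-1,1}$ in $p$) $\leq a_{n-1}-1$, so $z=x_{n-1,1}$. The only alternative is $\lambda_{n-1}\geq 2$ with $\deg(p_{X_{n-1}})=a_{n-1}$ (which, since $a_n\neq 1$, forces $W$ mixed with $r=n-1$); I rule this out by observing that nonvanishing of $p^{\dag}=\tfrac{p}{p_0}\cdot z\cdot x_{n,1}^{a_n-1}$ in colors $n-1$ and $n$ forces $\tau(z)\notin\{n-1,n\}$, so $\sigma(p_0)\cup\sigma(z)\cup\sigma(\widetilde{p}_0)=\{n,\tau(z)\}$ has at most two elements and is disjoint from $\{n-1\}$, contradicting $\sigma(p_0)\cup\sigma(z)\cup\sigma(\widetilde{p}_0)=[n]$.

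With $p_0=x_{n,1}^{a_n}$, $z=x_{n-1,1}$ and $\widetilde{p}_0=x_{n,1}^{a_n-1}$, we have $\sigma(p_0)=\sigma(\widetilde{p}_0)=\{n\}$ and $\sigma(z)=\{n-1\}$, so $[n]=\sigma(p_0)\cup\sigma(z)\cup\sigma(\widetilde{p}_0)=\{n-1,n\}$, which forces $n=2$; this also yields $p_0=x_{2,1}^{a_2}$ and $z=x_{1,1}$, as required. The points likely to need the most care are the handling of the degenerate mixed case $\lambda_{n-1}\geq 2$ and the precise swap argument pinning down the $x_{n,1}$-exponent of $\widetilde{p}_0$; the conceptual heart of the proof is the single use of $(X_{\boldsymbol{\lambda}}-X_n)$-compression.
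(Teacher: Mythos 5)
Your proof is correct, and for the two main steps — showing $p_0 = x_{n,1}^{a_n}$ (hence $\widetilde{p}_0 = x_{n,1}^{a_n-1}$) via the $(X_{\boldsymbol{\lambda}}-X_n)$-compression, and reading off $n=2$ from $\sigma(p_0)\cup\sigma(z)\cup\sigma(\widetilde{p}_0)=[n]$ — it matches the paper's argument (the paper uses the already-recorded fact $p_{X_t}\neq(p^\dag)_{X_t}$ for all $t$ where you re-derive it from compression, but these are the same thing). The one place you genuinely diverge is in ruling out $z<_{r\ell}x_{n-1,1}$. Both you and the paper reduce to the sub-case $\lambda_{n-1}\geq 2$, $\deg(p_{X_{n-1}})=a_{n-1}$; the paper then invokes Lemma~\ref{lemma:Wmixed=>p=p0} (giving $W$ hinged, $\tau(z)=1$, and hence $n=2$) and derives a contradiction from the maximality of $z$, whereas you observe directly that $\tau(z)\notin\{n-1,n\}$ (color $n-1$ would overflow, and $\lambda_n=1$ with $z<_{r\ell}x_{n,1}$ rules out color $n$), so that $\sigma(p_0)\cup\sigma(z)\cup\sigma(\widetilde{p}_0)=\{n,\tau(z)\}$ misses $n-1$, contradicting the covering identity. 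Your variant is self-contained and arguably cleaner since it avoids citing Lemma~\ref{lemma:Wmixed=>p=p0}. One small inaccuracy: the parenthetical ``forces $W$ mixed with $r=n-1$'' is not quite right, since $\lambda_{n-1}\geq 2$ together with $a_n\neq 1$ can also occur when $W$ is hinged with $n=2$ and $a_1\geq 2$; but you never use that claim, so it is harmless.
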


\begin{proof}
Clearly if $x_{n,1}^{a_n}$ divides $p$, then $x_{n,1}^{a_n}$ divides $p_0$. Furthermore, if $\deg(p_0) > a_n$, then $\deg(\widetilde{p}_0) \geq a_n$, which yields the contradiction $p_{X_n} = (p^{\dag})_{X_n} = x_{n,1}^{a_n}$, where the last equality follows from the fact that $a_n\neq 1$ implies $\lambda_n = 1$. Thus $p_0 = x_{n,1}^{a_n}$. Next, we show that $z = x_{n-1,1}$. Suppose instead $z<_{r\ell} x_{n-1,1}$, then by the maximality of $z$, we get $\deg(p_{X_{n-1}}) = a_{n-1}$, so the definition of $j$ forces $\lambda_{n-1} > 1$. Also, since $n-1\not\in \sigma(p_0)$ implies $\frac{p}{p_0} \neq 1$, Lemma \ref{lemma:Wmixed=>p=p0} yields $W$ is hinged, thus $n=2$ in this case, and by the maximality of $z$, the largest variable in $\supp(p_{X_{n-1}}) = \supp(p_{X_1})$ must divide $p_0$, which is a contradiction. Therefore $z = x_{n-1,1}$, and so $\widetilde{p}_0 = x_{n,1}^{a_n-1}$. Finally, since $\sigma(p_0) \cup \sigma(z) \cup \sigma(\widetilde{p}_0) = [n]$, we get $n=2$.
\end{proof}

\begin{lemma}\label{lemma:possibleshadows}
If $y\in \Gamma_p$, then either $\tau(y) = \tau(z)$ or $a_{\tau(y)} > 1$.
\end{lemma}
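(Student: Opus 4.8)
The plan is to argue by contradiction. Suppose $y\in\Gamma_p$ with $t:=\tau(y)\neq\tau(z)$ and $a_t=1$. Since $a_t=1$, the ideal $Q_{\bf a}$ contains $\langle X_t\rangle^2$, so every monomial of $W$ has color-$t$ degree at most $1$; as $y$ is a color-$t$ variable in $\supp(p)$, this forces $p_{X_t}=y$. By Lemma~\ref{lemma:Intersection=>at>1} we have $t\in\sigma(p_0)\cup\sigma(z)$, and since $\sigma(z)=\{\tau(z)\}$ with $t\neq\tau(z)$, it follows that $t\in\sigma(p_0)$; hence $y$ divides $p_0$, $(p_0)_{X_t}=y$, and $\big(\tfrac{p}{p_0}\big)_{X_t}=1$. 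Finally, $y\in\Gamma_p$ says $\tfrac{p}{y}\notin\{\partial(A)\}$, so no nonzero lift $\tfrac{p}{y}\cdot x$ (with $x\in X_{\boldsymbol\lambda}$) belongs to $A$.

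The strategy is to contradict quasi-compression: since $A=\mathcal{C}_t(A)$ and $p^\dag\in A$, every monomial $m\in\{W_d\}$ with $m_{X_t}=(p^\dag)_{X_t}$ and $m>_{r\ell}p^\dag$ must itself lie in $A$, so it suffices to exhibit such an $m$ \emph{outside} $A$. First suppose $t\in\sigma(\widetilde p_0)$. Then $\supp(\widetilde p_0)=\{x_{i,1}:i\in\sigma(\widetilde p_0)\}$ and the factorization $p^\dag=\tfrac{p}{p_0}\cdot z\cdot\widetilde p_0$ yield $(p^\dag)_{X_t}=x_{t,1}$; the monomial $m:=\tfrac{p}{y}\cdot x_{t,1}$ is nonzero (its color-$t$ degree is $1$), is a lift of $\tfrac{p}{y}$ and hence lies outside $A$, satisfies $m_{X_t}=x_{t,1}=(p^\dag)_{X_t}$, and $m\geq_{r\ell}p>_{r\ell}p^\dag$ (with $m=p$ exactly when $y=x_{t,1}$), giving the contradiction. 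Now suppose $t\notin\sigma(\widetilde p_0)$, so $(p^\dag)_{X_t}=1$. If there is a nonzero lift $\tfrac{p}{y}\cdot x$ with $\tau(x)\neq t$ and $x>_{r\ell}y$, it has $X_t$-part $1=(p^\dag)_{X_t}$, lies $>_{r\ell}p>_{r\ell}p^\dag$, and is outside $A$, again a contradiction; so we may assume $p$ is full in color $c$ whenever $c\neq t$ admits a variable $>_{r\ell}y$. But comparing degrees through $\deg(p)=1+\sum_{c\neq t}\deg(p_{X_c})$ and $\deg(p^\dag)=\sum_{c\neq t}\deg\big((p^\dag)_{X_c}\big)\leq\sum_{c\neq t}a_c$ shows $p$ is not full in some color $c_0\neq t$; that color then admits no variable $>_{r\ell}y$, which is possible only when $y=x_{t,1}$ and $c_0<t$. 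Hence $p$ is full in every color $\geq t$ and not full in some color $<t$.

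It remains to handle this last situation, and this is the main obstacle --- it is where the combinatorial structure of mixed and hinged rings enters in an essential way. The maximality of $\widetilde p_0$ forces $\sigma(\widetilde p_0)\subseteq\{i:i>t\}$ (otherwise $x_{t,1}$ could be substituted for $\min_{\leq_{r\ell}}\supp(\widetilde p_0)$ inside $\widetilde p_0$ to make it larger), so by $[n]=\sigma(p_0)\cup\sigma(z)\cup\sigma(\widetilde p_0)$ every color in $\{1,\dots,t\}\setminus\{\tau(z)\}$ lies in $\sigma(p_0)$. Moreover Lemma~\ref{lemma:Wmixed=>p=p0} gives $p=p_0$ (it applies because $W$ is not hinged, or else because $a_t=1$ forces $a_1=1$ in the hinged case), so $\{1,\dots,t\}\setminus\{\tau(z)\}\subseteq\sigma(p)$ and hence the unique non-full color $c_0$ must be $\tau(z)$, with $\tau(z)<t$. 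Tracing these constraints forces $t$ to be small and $\widetilde p_0$ to take an explicit form that makes $p^\dag=\tfrac{p}{p_0}\cdot z\cdot\widetilde p_0$ coincide with the lift $\tfrac{p}{y}\cdot z$ of $\tfrac{p}{y}$; since $p^\dag\in A$, this gives $\tfrac{p}{y}\in\{\partial(A)\}$, contradicting $y\in\Gamma_p$ and finishing the proof. The degenerate case $\deg(p_0)=1$ goes the same way at once: then $p_0=y$ and $\widetilde p_0=1$, so $p^\dag=\tfrac{p}{y}\cdot z$ lies in $A$ and again $\tfrac{p}{y}\in\{\partial(A)\}$. The genuinely delicate point is thus the bookkeeping in the last case, which must use the block structure of $({\bf a},\boldsymbol\lambda)$ for mixed and hinged rings.
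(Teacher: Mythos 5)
Your reduction is different from the paper's and gets close, but the proof has a genuine gap at exactly the point you yourself flag as ``the main obstacle.''

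Up through ``\ldots the unique non-full color $c_0$ must be $\tau(z)$, with $\tau(z)<t$,'' the chain of deductions is sound (your step $t\notin\sigma(\widetilde p_0)$ is essentially the paper's first reduction; the ``$p$ full in $c$ whenever $c\neq t$ admits a variable $>_{r\ell}y$'' observation, the degree count forcing a non-full color, the conclusion $y=x_{t,1}$, the swap argument giving $\sigma(\widetilde p_0)\subseteq\{i>t\}$, the invocation of Lemma~\ref{lemma:Wmixed=>p=p0}, and the identification $c_0=\tau(z)$ --- which tacitly but correctly uses that in a mixed or hinged ring $a_t=1$ forces $a_c=1$ for all $c\le t$ --- all check out). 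But the sentence ``Tracing these constraints forces $t$ to be small and $\widetilde p_0$ to take an explicit form that makes $p^\dag=\tfrac{p}{p_0}\cdot z\cdot\widetilde p_0$ coincide with the lift $\tfrac{p}{y}\cdot z$'' is an assertion, not a proof. You need $\widetilde p_0=\tfrac{p}{y}$. A degree count from $\sigma(\widetilde p_0)\subseteq\{i>t\}$ and $\deg\widetilde p_0=d-1$ does pin down $t=2$, $\tau(z)=1$, $\tau(z)\notin\sigma(p)$, and forces $\widetilde p_0=\prod_{c>2}x_{c,1}^{a_c}$; but then one must still prove $p_{X_c}=x_{c,1}^{a_c}$ for each $c>2$, and this is not automatic: in a mixed ring with $2<c\le r$ the monomial $p$ could a priori have $p_{X_c}=x_{c,j_c}$ with $j_c>1$. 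Ruling this out requires using the full strength of the definitions of $z$, $p_0$, and $\widetilde p_0$ (e.g.\ that $\hat\tau(z)=1$ here and that $\sigma(p_0)\cup\sigma(z)\cup\sigma(\widetilde p_0)=[n]$), and you provide no such argument. This is precisely the bookkeeping you acknowledge as ``genuinely delicate'' and then skip.

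For comparison, the paper avoids this analysis by a different second step: after showing $t\notin\sigma(\widetilde p_0)$, it proves $\{i:a_i=1\}\neq[n]$, then shows $x_{n,1}^{a_n}$ must divide $p$, and feeds this into Lemma~\ref{lemma:largestvardividingp} to collapse to $n=2$ with $p_0=x_{n,1}^{a_n}$, $z=x_{n-1,1}$, at which point Lemma~\ref{lemma:Wmixed=>p=p0} gives the contradiction directly. That route sidesteps having to identify $\widetilde p_0$ explicitly. Your approach may well close up with the additional work, but as written it leaves the hardest step unverified.
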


\begin{proof}
Assume there is some $y\in \Gamma_p$ such that $\tau(y) \neq \tau(z)$ and $a_{\tau(y)} = 1$. Note that $\frac{p}{y}\cdot x_{\tau(y),1}$ is a monomial in $\{W_d\}\backslash \{A\}$ that satisfies $\frac{p}{y}\cdot x_{\tau(y),1} \geq_{r\ell} p^{\dag}$. This implies $\tau(y) \not\in \sigma(\widetilde{p}_0)$, since otherwise $a_{\tau(y)} = 1$ forces $\big(\frac{p}{y}\cdot x_{\tau(y),1}\big)_{X_{\tau(y)}} = (p^{\dag})_{X_{\tau(y)}} = x_{\tau(y),1}$, which contradicts the assumption that $A$ is quasi-compressed.

Next, we show that $\{i\in [n]: a_i=1\} \neq [n]$. Suppose not, then by Lemma \ref{lemma:Intersection=>at>1}, either (i): $\sigma(p_0) = [n]$, $\tau(z) \in \sigma(p_0)$, and $\deg(p_0) = n$; or (ii): $\sigma(p_0) = [n]\backslash \sigma(z)$, $\tau(z) \not\in \sigma(p_0)$, and $\deg(p_0) = n-1$. Note that $p = p_0$ for both cases. In case (i), the definition of $\widetilde{p}_0$ yields $\deg(\widetilde{p}_0) = n-1$ and $\tau(z) \not\in \sigma(\widetilde{p}_0)$, so it follows from $\tau(y) \not\in \sigma(\widetilde{p}_0)$ that $\tau(y) = \tau(z)$, which is a contradiction. In case (ii), note that if $\hat{\tau}(z)=1$, then $z = x_{1,1}$ and $p = x_{2,1}\dots x_{n,1}$, which contradict the assumptions that $p_{X_n} \neq (p^{\dag})_{X_n}$ and $\partial(\{p\}) \not\subseteq \{\partial(A)\}$ when $n>2$ and $n=2$ respectively. Hence $\hat{\tau}(z)>1$, and $\frac{p}{y}\cdot x_{\tau(z),1}$ is a monomial in $\{W_d\}\backslash \{A\}$ that satisfies $\frac{p}{y}\cdot x_{\tau(z),1} >_{r\ell} p^{\dag}$. Since $\tau(y) \not\in \sigma(\widetilde{p}_0)$, we then get the contradiction $\big(\frac{p}{y}\cdot x_{\tau(z),1}\big)_{X_{\tau(y)}} = (p^{\dag})_{X_{\tau(y)}} = 1$.

Thus, $\{i\in [n]: a_i=1\} \neq [n]$ as claimed, and in particular, whether $W$ is mixed or hinged, we must have $a_n \neq 1$ and $\lambda_n = 1$. It is easy to see that $x_{n,1}^{a_n}$ divides $p$, since otherwise $\frac{p}{y}\cdot x_{n,1}$ is a monomial in $\{W_d\}\backslash \{A\}$ that satisfies $\frac{p}{y}\cdot x_{n,1} \geq_{r\ell} p^{\dag}$, which gives the contradiction $\big(\frac{p}{y}\cdot x_{n,1}\big)_{X_{\tau(y)}} = (p^{\dag})_{X_{\tau(y)}} = 1$. Consequently, Lemma \ref{lemma:largestvardividingp} yields $p_0 = x_{n,1}^{a_n}$, $z = x_{n-1,1}$, and $n=2$, so the assumption $a_{\tau(y)} = 1$ forces $a_{n-1} = a_1 = 1$, $\tau(y) = 1$, and $y\in \supp(\frac{p}{p_0})$. Yet, the existence of $y$ implies $p\neq p_0$, which contradicts Lemma \ref{lemma:Wmixed=>p=p0}.
\end{proof}

\begin{lemma}\label{lemma:shadowcontribution=1}
Suppose $\Gamma_p$ contains a unique element $z^{\prime}$, and suppose $\{i\in [n]: a_i=1\} = [n]$. If $m$ is a monomial in $W_d$ that is divisible by $\frac{p}{z^{\prime}}$ and satisfies $m\geq_{r\ell} p$, then $|\partial(\{m\}) - \partial(\{A\})| = 1$.
\end{lemma}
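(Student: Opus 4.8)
\emph{Proof proposal.} The plan is to show that $m\notin A$, that $w\in\Gamma_m$ where $m=\frac{p}{z^{\prime}}\cdot w$, and that $m/x\in\{\partial(A)\}$ for every $x\in\supp(m)\backslash\{w\}$; since each $a_i=1$ the monomial $m$ is squarefree, so this yields $\partial(\{m\})-\partial(\{A\})=\{m/x:x\in\Gamma_m\}=\{m/w\}$ and hence $|\partial(\{m\})-\partial(\{A\})|=1$. The case $m=p$ (equivalently $w=z^{\prime}$) is immediate: the $d$ monomials $p/x$, $x\in\supp(p)$, are pairwise distinct and constitute $\partial(\{p\})$, so $\partial(\{p\})-\partial(\{A\})=\{p/x:x\in\Gamma_p\}$ has cardinality $|\Gamma_p|=1$. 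So assume $m\neq p$; comparing $m=\frac{p}{z^{\prime}}w$ with $p=\frac{p}{z^{\prime}}z^{\prime}$ in the revlex order (they differ only at the variables $w$ and $z^{\prime}$) forces $w>_{r\ell}z^{\prime}$. Since $z^{\prime}\in\Gamma_p$, no element of $A$ is divisible by $\frac{p}{z^{\prime}}$; as $\frac{p}{z^{\prime}}\mid m$, this gives $m\notin A$ and $m/w=\frac{p}{z^{\prime}}\notin\{\partial(A)\}$, so $w\in\Gamma_m$, and it remains to prove $m/x\in\{\partial(A)\}$ for every $x\in\supp(m)\backslash\{w\}$.

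Fix such an $x$; then $x\in\supp(p)\backslash\{z^{\prime}\}$, so $\deg(p)=d\geq 2$, and uniqueness of $z^{\prime}$ in $\Gamma_p$ gives $\frac{p}{x}\in\{\partial(A)\}$: choose a variable $v$ with $\frac{p}{x}v\in A$. Squarefreeness forces $\tau(v)\neq\tau(z^{\prime})$ (as $z^{\prime}\mid\frac{p}{x}$), and $v\neq x$ since $p\notin A$. The monomial $m/x$ is exactly $\frac{p}{x}$ with $z^{\prime}$ replaced by $w$, so $\frac{m}{x}\cdot z^{\prime}=\frac{p}{x}\cdot w$, and, when $\tau(v)\neq\tau(w)$, also $\frac{m}{x}\cdot v$ equals $\big(\frac{p}{x}v\big)$ with $z^{\prime}$ replaced by $w$, a nonzero monomial of $W_d$. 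I would then distinguish cases. If $v=w$ then $\frac{p}{x}w=\frac{m}{x}z^{\prime}\in A$, so $m/x\in\{\partial(A)\}$. If $\tau(v)\neq\tau(w)$ then $\frac{m}{x}v$ is obtained from $\frac{p}{x}v\in A$ by swapping the variable $z^{\prime}$ for the revlex-larger $w$ while fixing the $X_t$-component for some colour $t\notin\{\tau(z^{\prime}),\tau(w)\}$ present in $\frac{p}{x}v$ (such a $t$ exists as $\sigma(\frac{p}{x}v)$ has $d\geq 2$ elements including $\tau(z^{\prime})$); the identity $\mathcal{C}_t(A)=A$ (or Lemma~\ref{lemma:switchingVariables} when $n>2$) then gives $\frac{m}{x}v\in A$, hence $m/x\in\{\partial(A)\}$. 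If $\tau(v)=\tau(w)$, squarefreeness forces $\tau(w)\notin\sigma(p)$, and provided $w\geq_{r\ell}v$ we may replace $v$ by $w$ in $\frac{p}{x}v\in A$ and use $\mathcal{C}_{\tau(z^{\prime})}(A)=A$ to obtain $\frac{p}{x}w=\frac{m}{x}z^{\prime}\in A$, again giving $m/x\in\{\partial(A)\}$. In short, we are done unless \emph{every} admissible $v$ with $\frac{p}{x}v\in A$ has colour $\tau(w)$, lies strictly above $w$ in the revlex order, and $\tau(w)\notin\sigma(p)$.

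This last configuration is the main obstacle, and it forces $n>2$. To rule it out (or reduce it to a finite check) I would combine several facts: the identity $\mathcal{C}_{\tau(z^{\prime})}(A)=A$ makes $\{v:\frac{p}{x}v\in A\}$ a revlex-initial segment of the admissible completions of $\frac{p}{x}$; every such $v$ in fact satisfies $\frac{p}{x}v>_{r\ell}p$, hence $v>_{r\ell}x$, because any $q$ with $p_{\min}\leq_{r\ell}q\leq_{r\ell}p^{\dag}$ satisfies $q_{X_t}\neq p_{X_t}$ for all $t$ whereas $(\frac{p}{x}v)_{X_{\tau(z^{\prime})}}=z^{\prime}=p_{X_{\tau(z^{\prime})}}$; and $w>_{r\ell}z^{\prime}$ with $\tau(w)\notin\sigma(p)$. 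Feeding these into the structure of $p=p_0$ (which holds here by Lemmas~\ref{lemma:Intersection=>at>1} and~\ref{lemma:Wmixed=>p=p0}) and of the factorization $p^{\dag}=z\widetilde{p}_0$ around the cover relation, together with Lemmas~\ref{lemma:pmin-largestVar} and~\ref{lemma:largestvardividingp}, I expect the configuration to collapse to values of $\deg(p_0)$ and $n$ that are either contradictory or small enough to be checked directly. This squarefree shadow estimate is the technical crux---it subsumes the ``hard part'' of the Frankl-F\"{u}redi-Kalai theorem---while the remaining steps are routine bookkeeping with the revlex order on squarefree colored monomials and with the definition of $\mathcal{C}_t$.
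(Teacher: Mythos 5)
Your proposal has a genuine gap: the final ``problematic configuration'' (every witness $v$ with $\frac{p}{x}v\in A$ has colour $\tau(w)$, lies above $w$, and $\tau(w)\notin\sigma(p)$) is never actually ruled out; you explicitly leave it as something you ``expect'' to collapse. The paper forestalls this entire case split by first extracting a structural consequence of the hypothesis $\{i\in[n]:a_i=1\}=[n]$ that your sketch never states: combining Lemma~\ref{lemma:Wmixed=>p=p0} (which gives $p=p_0$), Lemma~\ref{lemma:possibleshadows} (which gives $\tau(z^{\prime})=\tau(z)$, so $\tau(z)\in\sigma(p_0)$), and Lemma~\ref{lemma:Intersection=>at>1} yields $\sigma(p)=\sigma(p_0)=[n]$, hence $d=n$. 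Once $d=n$, every monomial of $W_d$ is squarefree with exactly one variable of each colour, so any $w$ with $\tfrac{p}{z^{\prime}}w\in W_d$ automatically satisfies $\tau(w)=\tau(z^{\prime})$. Your cases ``$\tau(v)=\tau(w)$'' and ``$\tau(w)\notin\sigma(p)$'' are therefore vacuous, and the uncompleted final paragraph is unnecessary --- but your argument as written does not make this deduction.

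After establishing $d=n$, the paper's proof of the remaining inclusion $m/y\in\partial(A)$ is also cleaner than your one-step swap: it \emph{first} normalizes the witness, passing from ``$\tfrac{p}{y}v\in A$ for some $v$'' to $\widetilde{p}:=\tfrac{p}{y}\cdot x_{\tau(y),1}\in A$ by quasi-compression (fixing any colour other than $\tau(y)$), and \emph{then} swaps $z^{\prime}$ for $y^{\prime}$ via a second quasi-compression (fixing any colour other than $\tau(z^{\prime})$); each step changes exactly one colour, which is what $\mathcal{C}_t(A)=A$ controls. Your attempt to swap $z^{\prime}$ for $w$ directly in $\tfrac{p}{x}v$ while $v$ is unnormalized is what creates the dependence on whether $\tau(v)=\tau(w)$ and whether $v\geq_{r\ell}w$. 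If you insert the observation $\sigma(p)=[n]$, $d=n$, and the two-step normalization, your proof collapses to essentially the paper's argument.
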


\begin{proof}
From the assumption $\{i\in [n]: a_i=1\} = [n]$, Lemma \ref{lemma:possibleshadows} tells us $\tau(z^{\prime}) = \tau(z)$, while Lemma \ref{lemma:Wmixed=>p=p0} gives us $p=p_0$, hence $\tau(z^{\prime}) \in \sigma(p_0)$, and Lemma \ref{lemma:Intersection=>at>1} yields $\sigma(p) = \sigma(p_0) = [n]$, which forces $d=n$. Write $p^{\prime} = \frac{p}{z^{\prime}}$ and fix some monomial $m = p^{\prime}\cdot y^{\prime}$ in $W_d$, where $y^{\prime} \in X_{\boldsymbol{\lambda}}$ satisfies $y^{\prime} \geq_{r\ell} z^{\prime}$. Note that $d=n$ implies $\tau(y^{\prime}) = \tau(z^{\prime})$. The case $\supp(m) = \{y^{\prime}\}$ is trivial, so assume not, and fix some $y\in \supp(m)$ such that $y\neq y^{\prime}$. In particular, $y\neq y^{\prime}$ implies $\tau(y) \neq \tau(y^{\prime})$. Clearly $y\in \supp(p^{\prime})$, so since $a_{\tau(z^{\prime})} = 1$ means $z^{\prime}$ does not divide $p^{\prime}$, we get $y\neq z^{\prime}$, hence the uniqueness of $z^{\prime}$ yields $\frac{p}{y} \in \partial(A)$. Consequently, the fact that $A$ is quasi-compressed implies $\widetilde{p} := \frac{p}{y}\cdot x_{\tau(y),1} \in \{A\}$. To prove this lemma, we have to show that $\frac{m}{y} \in \partial(A)$. Note that $\frac{m}{y} = \frac{p^{\prime}\cdot y^{\prime}}{y} = \frac{p}{z^{\prime}y} \cdot y^{\prime}$ (where $\frac{p}{z^{\prime}y}$ is a monomial), thus $\frac{m}{y}\cdot x_{\tau(y),1} = \frac{\widetilde{p}}{z^{\prime}} \cdot y^{\prime} \in \{W_d\}$. Now since $\widetilde{p} \in \{A\}$, $y^{\prime}\geq_{r\ell} z^{\prime}$, $\tau(y^{\prime}) = \tau(z^{\prime})$, and since $A$ is quasi-compressed, we get $\frac{m}{y}\cdot x_{\tau(y),1} \in \{A\}$, i.e. $\frac{m}{y} \in \partial(A)$.
\end{proof}

\begin{lemma}\label{lemma:pmin-pmax}
If $n>2$ and $a_n\neq 1$, then there exists a monomial $\widetilde{m} \in W_d$ not contained in $A$, such that $\widetilde{m} \geq_{r\ell} p$ and $\Gamma_{\widetilde{m}} = \{x_{n,1}\}$.
\end{lemma}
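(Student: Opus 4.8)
The plan is to produce $\widetilde m$ as a revlex-maximal element of the set
\[
\mathcal{S} := \bigl\{m\in\{W_d\} : m\geq_{r\ell} p,\ m\notin A,\ x_{n,1}\text{ divides }m\bigr\}.
\]
First, $\mathcal{S}\neq\emptyset$: if $\Gamma_p=\{x_{n,1}\}$ then $x_{n,1}$ divides $p$ and $\widetilde m:=p$ already works; otherwise pick $y\in\Gamma_p$ with $y\neq x_{n,1}$, and since $a_n\neq 1$ forces $\lambda_n=1$ (mixed/hinged structure) and, by Lemma \ref{lemma:largestvardividingp}, $x_{n,1}^{a_n}\nmid p$ when $n>2$, the monomial $\tfrac{p}{y}\cdot x_{n,1}$ is nonzero, lies in $\mathcal{S}$, and is $\geq_{r\ell} p$. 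Let $\widetilde m$ be revlex-maximal in $\mathcal{S}$. By the standing reduction of this section (if $\partial(\{m\})\subseteq\{\partial(A)\}$ for some $m\geq_{r\ell}p$ not in $A$, then we are already done by swapping $p^{\dag}$ for $m$), $\Gamma_{\widetilde m}$ is defined and non-empty, and it remains to show $\Gamma_{\widetilde m}=\{x_{n,1}\}$.

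The inclusion $x_{n,1}\in\Gamma_{\widetilde m}$ is immediate from Lemma \ref{lemma:switchingVariables}: if $\tfrac{\widetilde m}{x_{n,1}}\in\{\partial(A)\}$, write $\tfrac{\widetilde m}{x_{n,1}}\cdot x' = m''\in\{A\}$ for some variable $x'$; then $x'\neq x_{n,1}$ (else $m''=\widetilde m\notin A$), so $x'<_{r\ell}x_{n,1}$, and since $n>2$, replacing $x'$ in $m''$ by the $\leq_{r\ell}$-larger variable $x_{n,1}$ keeps us in $A$ by Lemma \ref{lemma:switchingVariables}, giving $\widetilde m\in A$, a contradiction. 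Now suppose toward a contradiction that some $y\in\Gamma_{\widetilde m}$ has $y\neq x_{n,1}$. For any variable $w>_{r\ell}y$ with $\tfrac{\widetilde m}{y}\cdot w\neq 0$, the monomial $\tfrac{\widetilde m}{y}\cdot w$ is $>_{r\ell}\widetilde m$, is $\geq_{r\ell}p$, is not in $A$ (because $\tfrac{\widetilde m}{y}\notin\{\partial(A)\}$), and is still divisible by $x_{n,1}$ (as $\lambda_n=1$ and $y\neq x_{n,1}$, so deleting $y$ does not touch $X_n$), hence lies in $\mathcal{S}$ and contradicts the maximality of $\widetilde m$. Therefore $\tfrac{\widetilde m}{y}\cdot w=0$ for every $w>_{r\ell}y$; taking $w=x_{\tau(y),1}$ forces $\hat\tau(y)=1$, i.e. $y=x_{\tau(y),1}$, and taking $w=x_{c,1}$ for each $c>\tau(y)$ forces $\deg(\widetilde m_{X_c})=a_c$. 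In particular $\tau(y)<n$ and $x_{n,1}^{a_n}\mid\widetilde m$, so $\widetilde m$ is saturated in every colour above $\tau(y)$.

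The remaining work — and the main obstacle — is to rule out this saturated configuration. Here I would first record the auxiliary fact that the revlex-largest monomial $m^{+}$ of $W_d$ lies in $A$: since $m^{+}$ is the greedy monomial, every monomial of $W_d$ can be transported to $m^{+}$ by a sequence of moves $m\mapsto \tfrac{m}{x}x'$ with $x'>_{r\ell}x$ (first push each colour's mass onto its revlex-largest variable $x_{c,1}$, then move mass between colours; each intermediate monomial is nonzero since the target exponents in $m^{+}$ are bounded by the colour caps), so Lemma \ref{lemma:switchingVariables} together with $A\neq\emptyset$ gives $m^{+}\in A$. Hence $\widetilde m\neq m^{+}$, so $\widetilde m<_{r\ell}m^{+}$, and combining this with the maximality of $\widetilde m$ in $\mathcal{S}$ shows that every monomial $>_{r\ell}\widetilde m$ divisible by $x_{n,1}$ already lies in $A$. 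Using this, together with the constraints from the previous paragraph ($x_{n,1},x_{\tau(y),1}\in\Gamma_{\widetilde m}$ and the saturation pattern of $\widetilde m$), the fact that $x_{n,1}^{a_n}\nmid p$ when $n>2$ (Lemma \ref{lemma:largestvardividingp}), and the factorization $p^{\dag}=\tfrac{p}{p_0}\,z\,\widetilde p_0$ with $\supp(\widetilde p_0)=\{x_{i,1}:i\in\sigma(\widetilde p_0)\}$ and $\sigma(p_0)\cup\sigma(z)\cup\sigma(\widetilde p_0)=[n]$, one extracts a monomial in $\mathcal{S}$ strictly above $\widetilde m$ (or a monomial of $A$ missing from $A$), which is the desired contradiction and forces $\Gamma_{\widetilde m}=\{x_{n,1}\}$. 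The delicate point throughout is precisely this saturated case, which genuinely exploits the mixed/hinged combinatorial structure of $W$ (in particular $\lambda_n=1$), exactly as the hard parts of the Clements–Lindström and Kruskal–Katona theorems do.
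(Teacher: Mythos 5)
Your proposal correctly handles the first two steps. The argument that $\mathcal{S}\neq\emptyset$ (via Lemma~\ref{lemma:largestvardividingp}) and that $x_{n,1}\in\Gamma_{\widetilde m}$ (via Lemma~\ref{lemma:switchingVariables} and maximality) are sound, and the deductions in the third paragraph — that any $y\in\Gamma_{\widetilde m}\setminus\{x_{n,1}\}$ must equal $x_{\tau(y),1}$ with $\tau(y)<n$ and that $\widetilde m$ is colour-saturated above $\tau(y)$ — are also correct and valid uses of the maximality of $\widetilde m$ in $\mathcal{S}$.

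However, the last paragraph is a genuine gap, and you acknowledge it yourself ("The remaining work — and the main obstacle — is to rule out this saturated configuration"). You offer two ingredients — $m^{+}\in A$ (via a transport argument that itself needs verification, in particular that a single-swap path from $p^{\dag}$ to $m^{+}$ stays inside $\{W_d\}$ at every step) and the factorization $p^{\dag}=\tfrac{p}{p_0}\,z\,\widetilde p_0$ — and then assert that "one extracts a monomial in $\mathcal{S}$ strictly above $\widetilde m$ \dots which is the desired contradiction." No such extraction is exhibited, and it is far from clear that one exists with just these tools: the whole difficulty is that a priori the revlex-maximal element of $\mathcal{S}$ could genuinely have $\deg(\widetilde m_{X_n})=a_n$ with some $x_{t,1}\in\Gamma_{\widetilde m}$. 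Your approach chooses a single canonical element (the revlex-max of $\mathcal{S}$) and must therefore prove it works, whereas the paper's proof sidesteps the saturated configuration entirely by constructing a \emph{different}, non-maximal witness: it starts from $m_0:=\tfrac{p}{y_0}\,x_{n,1}$, repeatedly replaces the revlex-largest variable of $\Gamma_{m_i}\setminus\{x_{n,1}\}$ by $x_{n,1}$, and uses quasi-compression of $A$ together with $p^{\dag}\in A$, $m_i\notin A$ to maintain the invariant $\deg((m_i)_{X_n})< b:=\min\{a_n,\deg(p_0)-1\}\le a_n$ throughout. That invariant is precisely what forecloses the saturated case and forces the process to terminate with $\Gamma_{m_{i'}}=\{x_{n,1}\}$. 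Your write-up never establishes an analogous bound on $\deg(\widetilde m_{X_n})$ (indeed in your saturated case that degree equals $a_n$), so the contradiction you gesture at would require new ideas you have not supplied. As written, the proof is incomplete.
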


\begin{proof}
Fix some $y\in \Gamma_p$. Since $n>2$, $a_n\neq 1$ and hence $\lambda_n = 1$, it follows from Lemma \ref{lemma:largestvardividingp} that $x_{n,1}^{a_n}$ does not divide $p$, thus $\frac{p}{y}\cdot x_{n,1}$ and $m_0 := \frac{p}{y_0}\cdot x_{n,1}$ are monomials in $W_d$. The definition of $\Gamma_p$ implies $\frac{p}{y}\cdot x_{n,1} \not\in A$, so Lemma \ref{lemma:switchingVariables} yields $m_0 \not\in A$, which in particular means $x_{n,1} \in \Gamma_{m_0}$. Write $b_0 := \deg((m_0)_{X_n})$, $b := \min\{a_n, \deg(p_0)-1\}$, and note that $(p^{\dag})_{X_n} = x_{n,1}^b$ by construction, so $(m_0)_{X_n} \neq (p^{\dag})_{X_n}$ implies $b_0 \neq b$.

If $y_0 = x_{n,1}$, then $m_0 = p$, so $x_{n,1}^{b_0}$ divides $p_0$, which implies $b_0\leq \min\{a_n, \deg(p_0)\}$. Also, we either have $\deg(p_0)-1 \geq a_n$, or $b_0 < \deg(p_0)$, since otherwise we would get $p_0 = x_{n,1}^{b_0}$, which forces $\widetilde{p}_0 = x_{n,1}^{b_0-1}$, and the fact that $\sigma(p_0) \cup \sigma(z) \cup \sigma(\widetilde{p}_0) = [n]$ would then contradict the assumption $n>2$. Thus, in either case, it follows from $b_0 \neq b$ that $1\leq b_0 < b$. If instead $y_0\neq x_{n,1}$, then since Lemma \ref{lemma:pmin-largestVar} yields $b \neq 0$, we get $1 = b_0 < b$. Consequently, whether $y_0 = x_{n,1}$ or $y_0 \neq x_{n,1}$, we always have $1\leq b_0 < b$.

Now if $\Gamma_{m_0} = \{x_{n,1}\}$, then $\widetilde{m} = m_0$ is our desired monomial. If not, then let $v_0$ be the largest variable in $\Gamma_{m_0}\backslash \{x_{n,1}\}$ and define $m_1 := \frac{m_0}{v_0}\cdot x_{n,1}$, which is a monomial in $W_d$ with $(m_1)_{X_n} = x_{n,1}^{b_0+1}$. Since $m_0 \not\in A$, Lemma \ref{lemma:switchingVariables} tells us that $m_1\not\in A$, so it follows from $m_1 >_{r\ell} m_0 >_{r\ell} p^{\dag}$ that $(m_1)_{X_n} \neq (p^{\dag})_{X_n}$, i.e. $b_0+1 < b$. In general, if $\Gamma_{m_i} \neq \{x_{n,1}\}$ and $b_0+i<b$ for some $i$, then we let $v_i$ be the largest variable in $\Gamma_{m_i}\backslash \{x_{n,1}\}$ (which is well-defined since $\Gamma_{m_i}$ is non-empty) and define $m_{i+1} := \frac{m_i}{v_i}\cdot x_{n,1}$, which is a monomial in $W_d$ with $(m_{i+1})_{X_n} = x_{n,1}^{b_0+i+1}$. The same argument as before gives $m_{i+1}\not\in A$ and $b_0+i+1<b$. Repeat this process until we get some integer $i^{\prime} < b-b_0$ such that $\Gamma_{m_{i^{\prime}}} = \{x_{n,1}\}$. Then $\widetilde{m} = m_{i^{\prime}}$ is our desired monomial.
\end{proof}

\begin{paragraph}{\it Proof of Theorem \ref{thm:CruxPart}.}
Consider three cases (i): $\{i\in [n]: a_i = 1\} = [n]$; (ii) $\{i\in [n]: a_i = 1\} \neq [n]$, $n>2$; and (iii): $\{i\in [n]: a_i = 1\} \neq [n]$, $n=2$.

{\it Case (i):} Suppose $\{i\in [n]: a_i = 1\} = [n]$, then Lemma \ref{lemma:possibleshadows} tells us $\Gamma_p$ contains a unique element $z^{\prime}$, which satisfies $\tau(z^{\prime}) = \tau(z)$. Lemma \ref{lemma:Wmixed=>p=p0} implies $p=p_0$, thus $\tau(z^{\prime}) \in \sigma(p_0)$, and Lemma \ref{lemma:Intersection=>at>1} gives $\sigma(p) = \sigma(p_0) = [n]$, which implies $d=n$. Let $\widetilde{y} := \max_{\leq_{r\ell}}(\supp(p_{\min}))$ and $\widetilde{q} := \frac{p_{\min}}{\widetilde{y}} \in \partial(\{A\})$. Note that $z = \min_{\leq_{r\ell}}(\supp(p^{\dag}))$, so since $p_{\min} \leq_{r\ell} p^{\dag}$, we get $\min_{\leq_{r\ell}}(\supp(\widetilde{q})) \leq_{r\ell} z$, thus $\widetilde{q} \neq \frac{p}{z^{\prime}}$. Define the sets of monomials $\mathcal{A}^{\prime} := \big\{m\in \{W_d\}: m\geq_{r\ell} p, \tfrac{p}{z^{\prime}}\text{ divides }m\big\}$ and $\mathcal{A}^{\prime\prime} := \big\{m\in \{A\}: \widetilde{q} \text{ divides }m\big\}$. For each variable $x\in X_{\boldsymbol{\lambda}}$, recall that $\hat{\tau}(x)$ is the unique integer in $[\lambda_{\tau(x)}]$ satisfying $x = x_{\tau(x), \hat{\tau}(x)}$. Since $d=n$ and $A$ is quasi-compressed, we get $|\mathcal{A}^{\prime}| = \hat{\tau}(z^{\prime})$ and $|\mathcal{A}^{\prime\prime}| = \hat{\tau}(\widetilde{y})$.

Next, we show $|\mathcal{A}^{\prime}| \geq |\mathcal{A}^{\prime\prime}|$. Suppose instead $\hat{\tau}(z^{\prime}) < \hat{\tau}(\widetilde{y})$. Since $p = p_0$ and $a_{\tau(z^{\prime})} = 1$, the maximality of $z$ gives $\hat{\tau}(z^{\prime}) = \hat{\tau}(z)-1$, so $\hat{\tau}(z) \leq \hat{\tau}(\widetilde{y})$. If $\tau(\widetilde{y}) \neq \tau(z)$, then $d=n$ implies $\min_{\leq_{r\ell}}(\supp(p_{\min})) \leq_{r\ell} z$, with equality holding if and only if $\sigma(p_{\min}) = [n]$ and $\hat{\tau}(x) = \hat{\tau}(z)$ for all $x\in \supp(p_{\min})$. If $\tau(\widetilde{y}) = \tau(z)$, then $\widetilde{y} \leq_{r\ell} z$, and $d=n\geq 2$ implies $\min_{\leq_{r\ell}}(\supp(p_{\min})) <_{r\ell} \max_{\leq_{r\ell}}(\supp(p_{\min})) \leq_{r\ell} z$. In either case, $p_{\min} \leq_{r\ell} \frac{p}{z^{\prime}}\cdot z <_{r\ell} p$, hence $\frac{p}{z^{\prime}}\cdot z\in A$, which yields the contradiction $(\frac{p}{z^{\prime}}\cdot z)_{X_t} = p_{X_t}$ for all $t\in [n]\backslash \{\tau(z)\}$.

Now let $\mathcal{A} := (\{A\} \cup \mathcal{A}^{\prime})\backslash \mathcal{A}^{\prime\prime}$, and note that $|\mathcal{A}| = |A| + |\mathcal{A}^{\prime}| - |\mathcal{A}^{\prime\prime}| \geq |A|$. Let $A^{\prime}$ be the $W_d$-monomial space spanned by the $|A|$ largest monomials in $\mathcal{A}$, and observe that $\|A^{\prime}\|_{({\bf a}, \boldsymbol{\lambda})} < \|A\|_{({\bf a}, \boldsymbol{\lambda})}$ by construction. Finally, Lemma \ref{lemma:shadowcontribution=1} tells us that $\partial(\mathcal{A}^{\prime}) - \partial(\{A\}) = \big\{\tfrac{p}{z^{\prime}}\big\}$, so since $\widetilde{q} \neq \frac{p}{z^{\prime}}$, we get $|\partial(A^{\prime})| \leq |\partial(A)| + 1 - 1 = |\partial(A)|$, and we are done with this case.

{\it Case (ii):} Suppose instead $\{i\in [n]: a_i = 1\} \neq [n]$ and $n>2$. Note that $a_n\neq 1$, so Lemma \ref{lemma:pmin-pmax} tells us there exists a monomial $\widetilde{m}$ in $W_d$ not contained in $A$, such that $\widetilde{m} \geq_{r\ell} p$ and $\Gamma_{\widetilde{m}} = \{x_{n,1}\}$. This means $x_{n,1}$ divides $\widetilde{m}$, and $\partial(\{A\}\cup \{\widetilde{m}\}) = \big(\partial(\{A\})\big) \cup \big\{\frac{\widetilde{m}}{x_{n,1}}\big\}$. Lemma \ref{lemma:pmin-largestVar} says $x_{n,1}$ divides $p_{\min}$, so the minimality of $p_{\min}$ yields $\frac{p_{\min}}{x_{n,1}} \not\in \partial\big(\{A\}\backslash \{p_{\min}\}\big)$. Furthermore, Lemma \ref{lemma:pmin-largestVar} also tells us $\frac{p}{p_0}\cdot z$ divides $p_{\min}$, so it follows from $\widetilde{m} \geq_{r\ell} p = \frac{p}{p_0}\cdot p_0$ that the exponents of $z$ in $p_{\min}$ and $\widetilde{m}$ are different. Consequently, $\frac{p_{\min}}{x_{n,1}} \neq \frac{\widetilde{m}}{x_{n,1}}$, thus $\partial\big((\{A\}\backslash \{p_{\min}\}) \cup \{\widetilde{m}\}\big) \subseteq \big((\partial(\{A\})\backslash \big\{\tfrac{p_{\min}}{x_{n,1}}\big\}\big) \cup \big\{\tfrac{\widetilde{m}}{x_{n,1}}\big\}$. Therefore, if $A^{\prime} := \Spank\big((\{A\}\backslash \{p_{\min}\}) \cup \{\widetilde{m}\}\big)$, then we have $|A^{\prime}| = |A|$, $\|A^{\prime}\|_{({\bf a}, \boldsymbol{\lambda})} < \|A\|_{({\bf a}, \boldsymbol{\lambda})}$, and $|\partial(A^{\prime})| \leq |\partial(A)|$.

{\it Case (iii):} Finally, consider the case $\{i\in [n]: a_i = 1\} \neq [n]$ and $n=2$. Equivalently, we have the conditions $a_1\leq a_2$, $a_2>1$, and $\lambda_2 = 1$. It is an easy consequence of Macaulay's theorem that $Q_{(a_1)} = \langle X_1\rangle^{a_1+1}$ is a Macaulay-Lex ideal of $\fieldk[X_1]$ with respect to the linear order $x_{1,1} < \dots < x_{1,\lambda_1}$ on its variables (cf. Lemma \ref{lemma:truncated-n=1-case}), hence the colored quotient ring $\fieldk[X_{\boldsymbol{\lambda}}]/Q_{(a_1,\infty)}$ is Macaulay-Lex by Theorem \ref{thm:MacLexProperties}\ref{thm-part:extendNewVarMacLex}. If $d\leq a_2$, then $W_d = (\fieldk[X_{\boldsymbol{\lambda}}]/Q_{(a_1,\infty)})_d$, which implies $\Revlex_{W_d}(A) = \Revlex_{(\fieldk[X_{\boldsymbol{\lambda}}]/Q_{(a_1,\infty)})_d}(A)$, so Corollary \ref{cor:coloredMacLexChar} yields $|\partial(\Revlex_{W_d}(A))| \leq |\partial(A)|$, and we are done by letting $A^{\prime} = \Revlex_{W_d}(A)$.

Assuming $a_2 < d\leq a_1 + a_2$, it follows from $a_1 \leq a_2$ that $x_{2,1}$ divides every monomial in $W_d$, and we note that $p_{\min}$ is the largest monomial in $W_d$ divisible by $\frac{p_{\min}}{x_{2,1}}$. Consequently, if we can find a monomial $\widetilde{m} \in \{W_d\}\backslash \{A\}$ such that $\widetilde{m} \geq_{r\ell} p$ and $|\Gamma_{\widetilde{m}}| = 1$, then $A^{\prime} := \Spank\big((\{A\} \cup \{\widetilde{m}\}) \backslash \{p_{\min}\}\big)$ satisfies $|A^{\prime}| = |A|$, $\|A^{\prime}\|_{({\bf a}, \boldsymbol{\lambda})} < \|A\|_{({\bf a}, \boldsymbol{\lambda})}$, and $|\partial(A^{\prime})| \leq |\partial(A)| + |\Gamma_{\widetilde{m}}| - \big|\big\{\tfrac{p_{\min}}{x_{2,1}}\big\}\big| = |\partial(A)|$, and we would be done.

We will show that such a monomial $\widetilde{m}$ exists. For each monomial $m\in \{W_d\}\backslash \{A\}$ such that $m\geq_{r\ell} p$, let $m_{\max}$ denote the largest monomial in $\{W_d\}\backslash \{A\}$ satisfying $(m_{\max})_{X_2} = m_{X_2}$. Note that $\frac{m_{\max}}{x} \cdot x_{1,1} \geq_{r\ell} m_{\max}$ for each $x\in \supp(m_{\max}) \cap X_1$, with equality holding if and only if $x = x_{1,1}$, thus the maximality of $m_{\max}$ implies $\Gamma_{m_{\max}} \cap X_1 \subseteq \{x_{1,1}\}$, i.e. $\Gamma_{m_{\max}} \subseteq \{x_{1,1}, x_{2,1}\}$. If $x_{1,1} \not\in \Gamma_{m_{\max}}$, then since $\Gamma_{m_{\max}}$ is assumed to be non-empty, we get $\Gamma_{m_{\max}} = \{x_{2,1}\}$, so we can let $\widetilde{m} = m_{\max}$ and we are done. Consequently, we assume henceforth that $x_{1,1} \in \Gamma_{m_{\max}}$ for all monomials $m\in \{W_d\}\backslash \{A\}$ satisfying $m\geq_{r\ell} p$.

Let $p^*$ denote the largest monomial in $\{W_d\}\backslash \{A\}$. Among all pairs $(q, q^{\dag})$ of consecutive monomials in $W_d$ such that $q\not\in A$ and $q^{\dag} \in A$, choose a pair for which $\deg(q_{X_1})$ is minimized. Clearly $p^* = p^*_{\max}$, and note that $x_{1,1}$ is, by assumption, contained in each of $\Gamma_{p^*}$ and $\Gamma_{q_{\max}}$. In particular, $x_{1,1} \in \Gamma_{p^*}$ implies $x_{2,1}^{a_2}$ divides $p^*$, since otherwise we would have $\frac{p^*}{x_{1,1}}\cdot x_{2,1} \in \{W_d\}\backslash \{A\}$ by the definition of $\Gamma_{p^*}$, and the fact that $\frac{p^*}{x_{1,1}}\cdot x_{2,1} >_{r\ell} p^*$ would then contradict the maximality of $p^*$.

Next, we show that $\deg(q_{X_1}) < a_1$. Suppose instead $\deg(q_{X_1}) = a_1$. For each $0\leq i \leq a_1+a_2-d$, define $m_i := \frac{p^*}{x_{2,1}^i} \cdot x_{1,1}^i$, which is a monomial in $W_d$ since the fact that $x_{2,1}^{a_2}$ divides $p^*$ implies $\deg((m_i)_{X_1}) = d-a_2+i$. Observe that $m_0 >_{r\ell} m_1 >_{r\ell} > \dots >_{r\ell} m_{a_1+a_2-d}$ are $(a_1+a_2-d+1)$ consecutive monomials in $W_d$, so since $p^* \geq_{r\ell} q_{\max}$ and since $\deg((m_{a_1+a_2-d})_{X_1}) = \deg((q_{\max})_{X_1}) = a_1$ by assumption, our choice of $(q, q^{\dag})$ (where $\deg(q_{X_1})$ is minimized) implies these $(a_1+a_2-d+1)$ consecutive monomials are all not in $A$. However, $q_{\max} >_{r\ell} p^{\dag}$, and there exists some $0\leq i^{\prime} \leq a_1+a_2-d$ such that $(m_{i^{\prime}})_{X_2} = (p^{\dag})_{X_2}$, which contradicts the assumption that $A$ is quasi-compressed. Consequently, $\deg(q_{X_1}) < a_1$ as claimed. 

This means $\deg((q_{\max})_{X_1}) < a_1$, which implies $q^{\dag} = \frac{q}{x_{2,1}}\cdot x_{1,1}$ and $(q_{\max})^{\dag} = \frac{q_{\max}}{x_{2,1}}\cdot x_{1,1}$, so $((q_{\max})^{\dag})_{X_2} = (q^{\dag})_{X_2}$. Now, $q^{\dag} \in A$, and $A$ is quasi-compressed, hence $(q_{\max})^{\dag} \in A$, thus $\frac{q_{\max}}{x_{2,1}} \in \partial(A)$ and $x_{2,1} \not\in \Gamma_{q_{\max}}$, i.e. $\Gamma_{q_{\max}} = \{x_{1,1}\}$, therefore we can let $\widetilde{m} = q_{\max}$.
\end{paragraph}

\section{Generalized colored multicomplexes}\label{sec:GenColoredMulticomplexes}
In this section, we study the $f$-vectors of generalized colored multicomplexes with arbitrarily prescribed maximum possible degrees of its variables. We begin by explaining the terminology we use. We then introduce what we call ``truncations'' of colored quotient rings, and we will construct a class of non-Macaulay-Lex truncations of colored quotient rings. Finally, we will use this construction to prove the main result (Theorem \ref{thm:revlexdoesntcharacterize}) of this section, which implies Theorem \ref{main-thm-f-vectors-colored}.

Let $X$ be a (possibly infinite) non-empty set of variables, and let $\mathcal{M}_X$ be the collection of all monomials whose supports are contained in $X$. We say $(X_1, \dots, X_n)$ is an {\it ordered partition} of $X$ if $X_1, \dots, X_n$ are pairwise disjoint non-empty subsets of $X$ whose union equals $X$. A {\it multicomplex} on $X$ is a subcollection $M \subseteq \mathcal{M}_X$ that is closed under divisibility, i.e. if $m\in M$ and $m^{\prime}$ divides $m$, then $m^{\prime} \in M$. In this paper, we always assume $1\in M$, and we do not require every $x\in X$ to be in $M$. For each $d\in \mathbb{N}$, let $\mathcal{M}_X^d$ be the collection of all monomials in $\mathcal{M}_X$ of degree $d$, and let $M^d$ be the collection of all monomials in $M$ of degree $d$. The {\it $f$-vector} of $M$ is $(f_0, f_1, \dots)$, where $f_i = |M^i|$ for each $i\in \mathbb{N}$. Note that $f_0 = 1$, corresponding to $1\in M$. If ${\bf a} = (a_1, \ldots, a_n) \in \mathbb{P}^n$, then an {\it ${\bf a}$-colored multicomplex} is a pair $(M, \pi)$, where $M$ is a multicomplex on a non-empty set $X$, and $\pi = (X_1, \dots, X_n)$ is an ordered partition of $X$, such that $\deg(m_{X_i}) \leq a_i$ for all $m\in M$ and $i\in [n]$.

Suppose $\varphi: X^{\prime} \to \overline{\mathbb{P}}$ is a map whose domain contains $X$. Let $\mathcal{M}_X(\varphi)$ be the collection of all monomials $m \in \mathcal{M}_X$ such that the exponent of every $x\in \supp(m)$ in $m$ is at most $\varphi(x)$. For each $d\in \mathbb{N}$, define $\mathcal{M}_X^d(\varphi) := \mathcal{M}_X^d \cap \mathcal{M}_X(\varphi)$. In particular, if ${\bf 1}: X \to \overline{\mathbb{P}}$ is the constant map defined by $x\mapsto 1$ for all $x\in X$, then $\mathcal{M}_X({\bf 1})$ is the collection of all squarefree monomials in $\mathcal{M}_X$. By abuse of notation, an ${\bf a}$-colored multicomplex $(M, \pi)$ is said to be in $\mathcal{M}_X(\varphi)$ if $M\subseteq \mathcal{M}_X(\varphi)$.

Recall that a {\it simplicial complex} $\Delta$ on a vertex set $V$ is a collection of subsets of $V$ such that $\{v\} \in \Delta$ for all $v\in V$, and $\Delta$ is closed under set inclusion. For our purposes, assume $\Delta$ is finite and non-empty. Each $F\in \Delta$ is called a {\it face}, and it has {\it dimension} $\dim F := |F| - 1$. The {\it dimension} of $\Delta$, denoted by $\dim \Delta$, is the maximum dimension of its faces, and the {\it $f$-vector} of $\Delta$ is $(f_0, \dots, f_{\dim \Delta})$, where each $f_i$ is the number of $i$-dimensional faces in $\Delta$. Every simplicial complex $\Delta$ with vertex set $V \subseteq X$ corresponds bijectively to a finite multicomplex $M \subseteq \mathcal{M}_X({\bf 1})$ via $\{x_{i_1}, \dots, x_{i_t}\} \in \Delta \Leftrightarrow {x_{i_1}}\cdots {x_{i_t}} \in M$, thus multicomplexes can be considered as generalizations of simplicial complexes. Consequently, an ${\bf a}$-colored complex with vertex set $V\subseteq X$ (as defined in Section \ref{sec:Intro}) can be identified with an ${\bf a}$-colored multicomplex in $\mathcal{M}_X({\bf 1})$. Note however that the $f$-vector of $\Delta$ and the $f$-vector of $M$ differ by a shift in the indexing.

Given a map $\phi: X_{\boldsymbol{\lambda}} \to \overline{\mathbb{P}}$, define the graded ring $W(\phi) = \bigoplus_{d\in \mathbb{N}} W_d(\phi) := \fieldk[X_{\boldsymbol{\lambda}}]/(Q_{{\bf a}} + T^{\phi})$, where $T^{\phi} := \langle\{x^{\phi(x)+1}: x\in X_{\boldsymbol{\lambda}}\}\rangle$ is an ideal contained in $\fieldk[X_{\boldsymbol{\lambda}}]$. (Recall that $W = \fieldk[X_{\boldsymbol{\lambda}}]/Q_{{\bf a}}$.) We say $W(\phi)$ is the {\it truncation} of $W$ induced by $\phi$. For the following proposition, recall that $\boldsymbol{\delta}_n = (0,\dots, 0,1)$.

\begin{proposition}\label{prop:counter-eg-a>d+1}
Let $n,d>1$, $\boldsymbol{\lambda} \in \mathbb{P}^n$ and ${\bf a} \in \overline{\mathbb{P}}^n$, such that $\boldsymbol{\lambda} \geq 2\cdot {\bf 1}_n + \boldsymbol{\delta}_n$, ${\bf a} \not\geq d\cdot {\bf 1}_n$, ${\bf a} \neq {\bf 1}_n$, and $|{\bf a}| > d$. Suppose $\phi: X_{\boldsymbol{\lambda}} \to \overline{\mathbb{P}}$ is a map satisfying $\sum_{x\in X_t\backslash\{x_{t,1}\}} \phi(x) \geq a_t$ for every $t\in [n]$. Then there exists some $W_d(\phi)$-monomial space $A$ such that $|\partial(A)| < |\partial(\Revlex_{W_d(\phi)}(A))|$.
\end{proposition}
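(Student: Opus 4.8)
The plan is to exhibit an explicit monomial space $A$ of the form $A = \Spank(\{W_d(\phi)\}) \setminus \Spank(\{N\})$ where $N$ is a small, carefully-chosen set of monomials of degree $d$, and then compare $|\partial(A)|$ directly against $|\partial(\Revlex_{W_d(\phi)}(A))|$. The intuition driving the construction is exactly the phenomenon that makes reverse-lex fail: since ${\bf a} \neq {\bf 1}_n$ there is some index with $a_t \geq 2$, and since ${\bf a} \not\geq d\cdot{\bf 1}_n$ there is some index where the degree cap $d$ genuinely bites; the hypotheses $\boldsymbol{\lambda} \geq 2\cdot{\bf 1}_n + \boldsymbol{\delta}_n$ and $\sum_{x\in X_t\setminus\{x_{t,1}\}}\phi(x)\geq a_t$ guarantee we have enough variables (at least $3$ in colour $1$, at least $2$ in the others) with enough exponent-room to build the configuration below. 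First I would pin down two colours $s,t$ witnessing the relevant inequalities (e.g. $s$ with $a_s\geq 2$, and an index where the $d$-cap is active), and write $\Revlex$-segments explicitly on $\{W_d(\phi)\}$ using the relabelling $y_1<y_2<\cdots$ of $X_{\boldsymbol\lambda}$ from the proof of Proposition~\ref{prop:ShadRevlexSegment}.

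The heart of the argument is the following: I will choose $A$ to be the complement, inside $\{W_d(\phi)\}$, of a monomial $m_0$ together with (if needed) a few monomials near the bottom of the revlex order, arranged so that removing them from the full degree-$d$ space kills comparatively few monomials from the shadow, whereas the revlex-segment $\Revlex_{W_d(\phi)}(A)$ — which is forced to omit the revlex-smallest $|\{W_d(\phi)\}| - \dim A$ monomials — must omit monomials whose "private" shadow contribution is larger. Concretely, the target monomial $m_0$ should be chosen so that every variable dividing $m_0$ has a "sibling" (another monomial of the same degree still in $A$ giving the same lower shadow element), so that $\partial(A) = \partial(\Spank(\{W_d(\phi)\}))$, i.e. deleting $m_0$ costs nothing in the shadow; this is arranged using the abundance of variables per colour and the degree caps. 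Meanwhile the revlex-smallest degree-$d$ monomial is, by the structure of the revlex order on a colored quotient ring, something like a power of $x_{n,1}$ times a near-minimal completion, and deleting it from the top-down revlex segment strictly decreases the shadow. Counting these two effects yields $|\partial(A)| = |\partial(\Spank(\{W_d(\phi)\}))|$ but $|\partial(\Revlex_{W_d(\phi)}(A))| > |\partial(\Spank(\{W_d(\phi)\}))| $, giving the strict inequality. (One must also invoke Proposition~\ref{prop:ShadRevlexSegment} so that $\partial(\Revlex_{W_d(\phi)}(A))$ is itself revlex-segment, making its cardinality easy to read off from where the revlex order "drops a dimension".)

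The main obstacle I anticipate is the bookkeeping needed to guarantee that the chosen $m_0$ (and the auxiliary bottom monomials, if more than one deletion is required to match dimensions) genuinely has a full set of siblings — i.e. that $\partial(\{m_0\})\subseteq\partial(\{A\}\setminus\{m_0\})$ — in the presence of both the colored relations $Q_{\bf a}$ and the truncation ideal $T^\phi$. This is where the precise hypotheses $\boldsymbol\lambda\geq 2\cdot{\bf 1}_n+\boldsymbol\delta_n$ (giving at least three colour-$1$ variables and at least two of every other colour) and $\sum_{x\in X_t\setminus\{x_{t,1}\}}\phi(x)\geq a_t$ must be used in full: they are exactly what let me "slide" each variable of $m_0$ to a higher-revlex variable of the same colour, or redistribute exponents within a colour, without hitting a $Q_{\bf a}$- or $T^\phi$-relation, thereby producing the sibling. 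The conditions $|{\bf a}|>d$ and ${\bf a}\not\geq d\cdot{\bf1}_n$ are what make $\{W_d(\phi)\}$ large enough (so $m_0$ with siblings exists) yet "curved" enough at the bottom of the revlex order (so the revlex deletion is costly); ${\bf a}\neq{\bf1}_n$ rules out the genuinely squarefree case where, by Frankl–Füredi–Kalai, no such $A$ can exist. I would organize the final write-up by first treating the case $n=2$ (cleanest), then reducing general $n$ to an essentially two-colour phenomenon by freezing the other colours at their minimal contribution.
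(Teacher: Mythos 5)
The proposal has a fundamental directional error that makes the construction unworkable, and the student's own summary reveals the confusion. The goal is to exhibit an $A$ with $|\partial(A)| < |\partial(\Revlex_{W_d(\phi)}(A))|$, i.e. to make the \emph{revlex-segment} space the one with the larger shadow. You instead propose taking $A$ to be the complement inside $\{W_d(\phi)\}$ of a single monomial $m_0$ (plus a few auxiliaries), chosen so that $\partial(A) = \partial(\Spank\{W_d(\phi)\})$. But this makes $|\partial(A)|$ as large as possible among all $W_d(\phi)$-monomial spaces. Since $\Revlex_{W_d(\phi)}(A) \subseteq W_d(\phi)$, we automatically have $\partial(\Revlex_{W_d(\phi)}(A)) \subseteq \partial(\Spank\{W_d(\phi)\})$, so $|\partial(\Revlex_{W_d(\phi)}(A))| \leq |\partial(A)|$ — exactly the wrong inequality. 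Indeed your concluding line claims $|\partial(\Revlex_{W_d(\phi)}(A))| > |\partial(\Spank\{W_d(\phi)\})|$, which is impossible for a subspace of $W_d(\phi)$. The narrative preceding it ("the revlex segment must omit monomials whose private shadow contribution is larger") actually argues that the revlex segment's shadow drops \emph{more} than $A$'s, which would again give $|\partial(A)| \geq |\partial(\Revlex_{W_d(\phi)}(A))|$.

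The correct strategy runs the other way: one wants $A$ to have a \emph{small} shadow and $\Revlex_{W_d(\phi)}(A)$ a \emph{larger} one. The paper does this by starting from a revlex-segment set $\widetilde{\mathcal{A}}$ and deleting a carefully chosen subcollection $\mathcal{A}_q$ (all multiples of a degree-$(d-1)$ monomial $q$ inside $\widetilde{\mathcal{A}}$) to get $\mathcal{A} = \widetilde{\mathcal{A}}\setminus\mathcal{A}_q$; this deletion removes $q$ itself from the shadow, so $|\partial(\mathcal{A})| < |\partial(\widetilde{\mathcal{A}})|$. Meanwhile they identify $\Revlex_{W_d(\phi)}(\Spank(\mathcal{A}))$ with a slightly trimmed revlex segment $\mathcal{I}$ whose shadow equals $\partial(\widetilde{\mathcal{A}})$ (each removed monomial's shadow contribution is absorbed by siblings that remain, thanks to the surplus of variables per color guaranteed by $\boldsymbol{\lambda} \geq 2\cdot{\bf 1}_n + \boldsymbol{\delta}_n$). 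So the "sibling/slide" idea you gesture at is the right mechanism, but it must be applied to the \emph{revlex segment} side — so that the revlex deletion costs nothing in the shadow — not to the arbitrary side $A$. As written, your construction cannot produce the desired strict inequality, and the final claim is internally inconsistent.
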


\begin{proof}
We will explicitly construct such a $W_d(\phi)$-monomial space $A$. First, we fix some notation. For all monomials $m, m^{\prime}$ in $W(\phi)$, write $m\nmid m^{\prime}$ to mean $m$ does not divide $m^{\prime}$. Whenever we say $m$ (resp. $x\in X_{\boldsymbol{\lambda}}$) is the largest or smallest monomial (resp. variable) satisfying certain conditions, it is always with respect to the revlex order (induced by the linear order on $X_{\boldsymbol{\lambda}}$ given in Section \ref{sec:Intro}). Given any $s\in \mathbb{N}$, let $\overline{s}$ denote the unique integer in $[n]$ such that $s \equiv \overline{s} \pmod{n}$. For each $x \in X_{\boldsymbol{\lambda}}$, write $x^{\dag}$ to mean the largest variable in $X_{\boldsymbol{\lambda}}$ that is smaller than $x$. For each $t\in [n]$, define $\widehat{X}_t := \big\{m\in \mathcal{M}_{X_t}^{a_t}(\phi): x_{t,1} \nmid m\big\}$, and let $\widehat{m}_t$ be the largest monomial in $\widehat{X}_t$, which is well-defined as $\sum_{x\in X_t\backslash\{x_{t,1}\}} \phi(x) \geq a_t$ implies $\widehat{X}_t$ is non-empty. Since ${\bf a} \not\geq d\cdot {\bf 1}_n$ and ${\bf a} \neq {\bf 1}_n$, there exists some $s \in [n]$ such that $a_s \leq d-1$ and $a_{\overline{s-1}} \geq 2$. If $s>1$, then define
\begin{equation*}
\widehat{Y} := \Big\{m\in \mathcal{M}_{X_{s-1}}^{a_{s-1}-2}(\phi) : x_{s-1,1} \nmid m \text{ and }x_{s-1,2}^{\min\{\phi(x_{s-1,2}), a_{s-1}\}} \nmid m \Big\},
\end{equation*}
while if $s=1$, then define
\begin{equation*}
\widehat{Y} := \Big\{m\in \mathcal{M}_{X_n}^{a_n-2}(\phi) : x_{n,1} \nmid m \text{ and }y^{\min\{\phi(y), a_n\}} \nmid m \text{ for each }y\in \{x_{n,2}, x_{n,3}\}\Big\}.
\end{equation*}
In either case, $\widehat{Y}$ is non-empty, since $\sum_{x\in X_{\overline{s-1}}\backslash\{x_{\overline{s-1},1}\}} \phi(x) \geq a_{\overline{s-1}}$. In particular, if $a_{\overline{s-1}} = 2$, then $\widehat{Y} = \{1\}$. Let $\widehat{p}$ be the largest monomial in $\widehat{Y}$, let $\Gamma := [n]\backslash \{s, \overline{s-1}\}$, and let $\widetilde{m} := \widehat{p} \cdot \prod_{t\in \Gamma} \widehat{m}_t$, which by construction is a monomial in $W(\phi)$ and has degree $|{\bf a}| - a_s - 2 \geq d-1-a_s$. Let $m^{\prime}$ be the smallest monomial of degree $d-1-a_s$ in $W(\phi)$ that divides $\widetilde{m}$, let $q := m^{\prime}\widehat{m}_s$, and let $\widetilde{q} := \frac{q(x_{s,2}^{\dag})}{x_{s,2}}$. In particular, $\boldsymbol{\lambda} \geq 2\cdot {\bf 1}_n + \boldsymbol{\delta}_n$ implies $x_{s,2}^{\dag} \in X_{\overline{s-1}}$, while $x_{s,2}$ divides $\widehat{m}_s$ by construction, so $q$ and $\widetilde{q}$ are monomials in $W_{d-1}(\phi)$, with $\widetilde{q} <_{r\ell} q$.

Next, let $\widetilde{x}$ be the smallest variable in $\{x\in X_{\boldsymbol{\lambda}}: x >_{r\ell} x_{s,2}\}$ such that $\widetilde{q}\widetilde{x} \in \{W_d(\phi)\}$. Define the sets $\widetilde{\mathcal{A}} := \big\{m\in \{W_d(\phi)\}: m \geq_{r\ell} \widetilde{q}\widetilde{x}\big\}$, $\mathcal{A}_q := \big\{m\in \widetilde{\mathcal{A}}: q\text{ divides }m\big\}$, and $\mathcal{A}_{\widetilde{q}} := \big\{m\in \widetilde{\mathcal{A}}: \widetilde{q}\text{ divides }m\big\}$. Note that $\widetilde{\mathcal{A}}\backslash\mathcal{A}_{\widetilde{q}}$ is a revlex-segment set in $\{W_d(\phi)\}$, while $\mathcal{A}_q \cap \mathcal{A}_{\widetilde{q}} = \emptyset$, since $qx_{s,2}^{\dag} = \widetilde{q}x_{s,2}$ is not contained in $\widetilde{\mathcal{A}}$. Also, $q_{X_t} = \widetilde{q}_{X_t}$ for every $t\in \Gamma$, thus for each $x\in \bigcup_{t\in \Gamma} X_t$, we have $qx \in \{W_d(\phi)\}$ if and only if $\widetilde{q}x \in \{W_d(\phi)\}$. The definition of $\widehat{p}$ implies $qx_{s,1}^{\dag}$, $qx_{\overline{s-1},1}$, $\widetilde{q}x_{s,1}^{\dag}$, $\widetilde{q}x_{\overline{s-1},1}$ are monomials in $W_d(\phi)$, while the definition of $\widehat{m}_s$ implies $qx_{s,1} \not\in \{W_d(\phi)\}$ and $\widetilde{q}x_{s,1} \in \{W_d(\phi)\}$. Consequently, $|\mathcal{A}_q| = |\mathcal{A}_{\widetilde{q}}| - 1 < |\mathcal{A}_{\widetilde{q}}|$.

Now, define $\mathcal{A} := \widetilde{\mathcal{A}}\backslash \mathcal{A}_q$, and let $x_0$ be the (unique) variable in $X_{\boldsymbol{\lambda}}$ such that $\widetilde{q}x_0$ is the largest monomial in $\mathcal{A}_{\widetilde{q}}$. The set $\mathcal{I} := (\widetilde{\mathcal{A}}\backslash\mathcal{A}_{\widetilde{q}}) \cup \{\widetilde{q}x_0\}$ is revlex-segment in $\{W_d(\phi)\}$, and $|\mathcal{I}| = |\mathcal{A}|$ implies $\Spank(\mathcal{I}) = \Revlex_{W_d(\phi)}(\Spank(\mathcal{A}))$. Furthermore, for any $x, x^{\prime} \in X_{\boldsymbol{\lambda}}$ such that $\widetilde{q}x \in \widetilde{\mathcal{A}}$, $x^{\prime} \neq x$, and $x^{\prime}$ divides $\widetilde{q}$, we claim that $\frac{\widetilde{q}x}{x^{\prime}} \in \partial(\widetilde{\mathcal{A}}\backslash \mathcal{A}_{\widetilde{q}})$. Indeed, let $t^{\prime}$ be the unique integer in $[n]$ satisfying $x^{\prime} \in X_{t^{\prime}}$, and note that the construction of $\widetilde{q}$ yields $x^{\prime} \neq x_{t^{\prime},1}$, hence $\frac{\widetilde{q}x}{x^{\prime}}\cdot x_{t^{\prime},1} \in \widetilde{\mathcal{A}}\backslash\mathcal{A}_{\widetilde{q}}$, which means $\frac{\widetilde{q}x}{x^{\prime}} \in \partial(\widetilde{\mathcal{A}}\backslash \mathcal{A}_{\widetilde{q}})$ as claimed. This implies $\partial(\mathcal{I}) = \partial(\widetilde{\mathcal{A}})$, so since $\partial(\mathcal{A}) \subseteq \partial(\widetilde{\mathcal{A}})\backslash\{q\}$, we conclude that $|\partial(\Spank(\mathcal{A}))| < |\partial( \widetilde{\mathcal{A}})| = |\partial(\mathcal{I})| = |\partial(\Revlex_{W_d(\phi)}(\Spank(\mathcal{A})))|$.
\end{proof}

\begin{remark}
In the proof of Proposition \ref{prop:counter-eg-a>d+1}, we used the condition $\boldsymbol{\lambda} \geq 2\cdot {\bf 1}_n + \boldsymbol{\delta}_n$ only in the case when $s=1$, i.e. when every $t\in [n-1]$ satisfying $a_{t+1} \leq d-1$ also satisfies $a_t = 1$. In all other cases, i.e. when $a_{t+1}\leq d-1$ and $a_t \geq 2$ for some $t\in [n-1]$, we only need the weaker condition that $\boldsymbol{\lambda} \geq 2\cdot {\bf 1}_n$.
\end{remark}

\begin{corollary}\label{cor:truncatedNotMacLex}
Let $n>1$, ${\bf a} \in \overline{\mathbb{P}}^n$, $\boldsymbol{\lambda} \in \mathbb{P}^n$ such that $\boldsymbol{\lambda} \geq 2\cdot {\bf 1}_n + \boldsymbol{\delta}_n$ and ${\bf a} \neq {\bf 1}_n$. If $\phi: X_{\boldsymbol{\lambda}} \to \overline{\mathbb{P}}$ is a map such that $\sum_{x\in X_t\backslash \{x_{t,1}\}} \phi(x) \geq a_t$ for every $t\in [n]$, then $W(\phi)$ is not Macaulay-Lex.
\end{corollary}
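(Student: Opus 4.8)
The plan is to derive this as a short consequence of Proposition~\ref{prop:counter-eg-a>d+1}. Since $W(\phi) = \fieldk[X_{\boldsymbol{\lambda}}]/(Q_{{\bf a}} + T^{\phi})$ is a quotient of a polynomial ring by a monomial ideal, Theorem~\ref{thm:charMacLexRings} applies: if $W(\phi)$ were Macaulay-Lex, then (by part~\ref{list:charMacLex-lowershadComplete}) we would have $|\partial(\Revlex_{W_d(\phi)}(A))| \leq |\partial(A)|$ for every $d \geq 1$ and every $W_d(\phi)$-monomial space $A$. Hence it suffices to produce a single degree $d$ and a single $W_d(\phi)$-monomial space $A$ with $|\partial(A)| < |\partial(\Revlex_{W_d(\phi)}(A))|$, which is exactly the output of Proposition~\ref{prop:counter-eg-a>d+1} once its hypotheses are arranged.

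Next I would reduce to the case ${\bf a} \in \mathbb{P}^n$ and pin down a usable $d$. For a fixed $d$, the sets $\{W_d(\phi)\}$ and $\{W_{d-1}(\phi)\}$, together with the divisibility relation between them --- and therefore both the operator $\partial$ and the operation $\Revlex_{W_d(\phi)}$ on $W_d(\phi)$-monomial spaces --- are unchanged if each $a_t$ is replaced by $\min\{a_t, d\}$, because $\langle X_t\rangle^{a_t+1}$ and $\langle X_t\rangle^{\min\{a_t,d\}+1}$ agree in every degree at most $d$. So, setting ${\bf b} := \min\{{\bf a}, d\cdot{\bf 1}_n\} \in \mathbb{P}^n$, a failure of the shadow inequality in degree $d$ for the type-${\bf b}$ truncation is simultaneously a failure for $W(\phi)$. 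Passing from ${\bf a}$ to ${\bf b}$ preserves $\boldsymbol{\lambda} \geq 2\cdot{\bf 1}_n + \boldsymbol{\delta}_n$, preserves the condition $\sum_{x\in X_t\setminus\{x_{t,1}\}}\phi(x) \geq b_t$ (since $b_t \leq a_t$), and, provided $d \geq 2$, preserves ${\bf b} \neq {\bf 1}_n$ (some $a_t \geq 2$ forces $b_t = \min\{a_t,d\} \geq 2$). So the task reduces to finding $d \geq 2$ for which, in addition, ${\bf b} \not\geq d\cdot{\bf 1}_n$ and $|{\bf b}| > d$, and then applying Proposition~\ref{prop:counter-eg-a>d+1}.

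For the choice of $d$, set $c := \min_t a_t$ (or the least finite entry of ${\bf a}$, should some $a_t$ be infinite) and take $d := c+1$; then $d \geq 2$, and ${\bf b} \not\geq d\cdot{\bf 1}_n$ since the coordinate attaining the minimum has value $c < d$. It remains to verify $|{\bf b}| > d$: one has $|{\bf b}| = \sum_t \min\{a_t, d\} \geq c + (n-1)$, which exceeds $c+1 = d$ whenever $n \geq 3$; and for $n = 2$, writing the entries as $a_1 \leq a_2$, the hypothesis ${\bf a} \neq {\bf 1}_2$ forces $a_2 \geq 2$, so $|{\bf b}| \geq c + 2 > d$. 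With these numerics in place, Proposition~\ref{prop:counter-eg-a>d+1} supplies a $W_d(\phi)$-monomial space $A$ with $|\partial(A)| < |\partial(\Revlex_{W_d(\phi)}(A))|$, contradicting the inequality forced by the Macaulay-Lex property; therefore $W(\phi)$ is not Macaulay-Lex.

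All the genuine work lies in Proposition~\ref{prop:counter-eg-a>d+1}; beyond that there is essentially no obstacle, only two small bookkeeping points that deserve care: that a legal degree $d$ with $c < d < |{\bf a}|$ and $d \geq 2$ always exists --- this is precisely where $n > 1$ and ${\bf a} \neq {\bf 1}_n$ enter --- and that truncating the type to $\min\{{\bf a}, d\cdot{\bf 1}_n\}$ leaves the degree-$d$ shadow computation untouched, so that the proposition remains applicable even when ${\bf a}$ has infinite entries.
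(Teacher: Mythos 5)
Your proof is correct and takes essentially the same route as the paper: choose $d = \min_i a_i + 1$, verify the numerical hypotheses of Proposition~\ref{prop:counter-eg-a>d+1} (using $n>1$ and ${\bf a}\neq{\bf 1}_n$ to get ${\bf a}\not\geq d\cdot{\bf 1}_n$ and $|{\bf a}|>d$), and conclude via Theorem~\ref{thm:charMacLexRings}. The only difference is your intermediate reduction to ${\bf b}=\min\{{\bf a},d\cdot{\bf 1}_n\}\in\mathbb{P}^n$, which is harmless but unnecessary since Proposition~\ref{prop:counter-eg-a>d+1} is already stated for ${\bf a}\in\overline{\mathbb{P}}^n$ (infinite entries make $|{\bf a}|>d$ automatic, and the minimum finite entry still gives ${\bf a}\not\geq d\cdot{\bf 1}_n$).
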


\begin{proof}
Setting $d = \min\{a_i+1: i \in [n]\}$, we clearly have ${\bf a} \not\geq d\cdot {\bf 1}_n$, while the condition ${\bf a} \neq {\bf 1}_n$ implies $|{\bf a}| > d$. The assertion then follows from Proposition \ref{prop:counter-eg-a>d+1} and Theorem \ref{thm:charMacLexRings}.
\end{proof}

Let $X$ be a non-empty linearly ordered set. For each $d\in \mathbb{N}$, recall that the revlex order $\leq_{r\ell}$ on $\mathcal{M}_X^d$ (induced by the linear order on $X$) was already defined in Section \ref{sec:MacLexRings} for finite $X$, and when $X$ is infinite, we extend $\leq_{r\ell}$ in the obvious way. Given a non-empty subcollection $\mathcal{M} \subseteq \mathcal{M}_X$, let $\mathcal{M}^d := \mathcal{M} \cap \mathcal{M}_X^d$, and define a {\it revlex-segment set in $\mathcal{M}^d$} to be a finite subset $\mathcal{A} \subseteq \mathcal{M}^d$ such that if $m\in \mathcal{A}$, $m^{\prime} \in \mathcal{M}^d$ and $m^{\prime} >_{r\ell} m$, then $m^{\prime} \in \mathcal{A}$. If $M$ is a multicomplex on $X$, then $\Spank(\mathcal{M}_X\backslash M)$ forms a (graded) monomial ideal of the standard graded polynomial ring $\fieldk[X]$, which we denote by $I^M = \bigoplus_{d\in \mathbb{N}} I_d^M$. The map $M \mapsto I^M$ gives a bijection between multicomplexes on $X$ and monomial ideals of $\fieldk[X]$, and the corresponding inverse map is $I \mapsto \{\fieldk[X]/I\}$. We say $M$ is a {\it compressed multicomplex in $\mathcal{M}$} if $M$ is finite, and $M^d$ is a revlex-segment set in $\mathcal{M}^d$ for every $d\in \mathbb{N}$. When $\mathcal{M}$ is not specified, we assume $\mathcal{M} = \mathcal{M}_X$.

\begin{lemma}\label{lemma:MacLexMulticomplexCorresp}
Let $M$ be a multicomplex on $X$. The following are equivalent:
\begin{enum*}
\item For every multicomplex $M^{\prime} \subseteq M$, there exists a compressed multicomplex $M^{\prime\prime}$ in $M$ with the same $f$-vector as $M^{\prime}$.
\item $I^M$ is a Macaulay-Lex ideal of the polynomial ring $\fieldk[X]$.
\end{enum*}
\end{lemma}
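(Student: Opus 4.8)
The plan is to identify both conditions with a single statement about the monomial ideals of $R:=\fieldk[X]/I^M$, and then to invoke the standard fact that the Macaulay--Lex property may be tested on monomial ideals alone. Throughout I identify $\{R_d\}$ with $M^d$ for each $d\in\mathbb N$, so that $\{R\}$ is exactly the multicomplex $M$.

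First I would set up the dictionary between sub-multicomplexes of $M$ and graded monomial ideals of $R$. Given a multicomplex $M^{\prime}\subseteq M$, the complement $\{R\}\setminus M^{\prime}$ is closed under multiplication by the variables (if $m\notin M^{\prime}$ but $xm\in M^{\prime}$, then $m\mid xm$ forces $m\in M^{\prime}$, a contradiction), so $I^{M^{\prime}}/I^M=\Spank(\{R\}\setminus M^{\prime})$ is a graded monomial ideal of $R$; conversely $\{R\}\setminus\{J\}$ is a sub-multicomplex of $M$ for every graded monomial ideal $J$ of $R$, since $J$, being an ideal, is closed under divisibility. These two operations are mutually inverse, and $H(I^{M^{\prime}}/I^M,d)=|M^d|-|(M^{\prime})^d|$ for all $d$. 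Moreover $(M^{\prime})^d$ is a revlex-segment set in $M^d$ precisely when $\{R_d\}\setminus\{(M^{\prime})^d\}$ is a lex-segment set in $\{R_d\}$, so $M^{\prime}$ is a compressed multicomplex in $M$ exactly when $I^{M^{\prime}}/I^M$ is a lex ideal of $R$ (the closure under multiplication by variables being automatic as above, no separate appeal to ``upper shadow of a lex-segment is lex-segment'' is needed). Consequently two sub-multicomplexes of $M$ have equal $f$-vectors iff the corresponding monomial ideals of $R$ have equal Hilbert functions, and condition~(i) becomes the statement: \emph{every monomial ideal of $R$ has the Hilbert function of some lex ideal of $R$.}

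Next I would close the gap between this reformulation of~(i) and condition~(ii), i.e.\ that $R$ is Macaulay--Lex. One direction is immediate, since monomial ideals are graded. For the other, given an arbitrary graded ideal $J$ of $R$, I would lift it to a graded ideal $\widetilde J\supseteq I^M$ of $\fieldk[X]$, fix a degree-compatible term order, and note that $\inid(\widetilde J)$ is a monomial ideal containing $\inid(I^M)=I^M$ that has the same Hilbert function as $\widetilde J$; hence $\inid(\widetilde J)/I^M$ is a monomial ideal of $R$ with the same Hilbert function as $J$, and applying the reformulation of~(i) to it produces the required lex ideal. (This is exactly the standard reduction of the Macaulay--Lex property to monomial ideals; cf.\ Theorem~\ref{thm:charMacLexRings}.)

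The lemma has no serious obstacle — it is essentially a translation — so the two things needing care are: checking that the closure properties match up across the dictionary (``closed under divisibility'' for multicomplexes versus ``graded ideal'' and ``lex-segment in each degree'' for ideals of $R$), and keeping the finiteness bookkeeping consistent. Since compressed multicomplexes are finite by definition, on the algebraic side one restricts attention to monomial ideals of $R$ of finite colength; in the setting relevant to this section, where $\fieldk[X]/I^M$ is finite-dimensional (equivalently, $M$ is finite), this restriction costs nothing and the reduction to monomial ideals above is carried out within that class.
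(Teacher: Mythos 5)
Your proposal is correct and follows essentially the same route as the paper's proof: set up the dictionary identifying sub-multicomplexes of $M$ with graded monomial ideals of $R=\fieldk[X]/I^M$ (with matching $f$-vectors and Hilbert functions, and compressed multicomplexes corresponding to lex ideals), then reduce the full Macaulay--Lex condition to the monomial case by passing to initial ideals via \cite[Thm.\ 15.26]{Eisenbud:CommutativeAlgebraBook}. You are somewhat more explicit than the paper about the finiteness convention built into the definition of ``compressed multicomplex,'' which is a fair point to flag since the lemma is only invoked in the paper in situations where $M$ is finite.
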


\begin{proof}
For every graded ideal $I$ of $R := \fieldk[X]/I^M$, let $\inid(I)$ be its initial ideal with respect to some monomial order (see \cite[Chap. 15]{Eisenbud:CommutativeAlgebraBook}). Hilbert functions are additive on exact sequences, so by \cite[Thm. 15.26]{Eisenbud:CommutativeAlgebraBook}, $R$ is Macaulay-Lex if and only if for every graded ideal $I$ of $R$, there exists a lex ideal $L$ of $R$ such that $R/\inid(I)$ and $R/L$ have the same Hilbert function, or equivalently, $\{R\}\backslash \{\inid(I)\}$ and $\{R\}\backslash \{L\}$ are multicomplexes with the same $f$-vectors. By definition, $L$ is a lex ideal of $R$ if and only if $\{R\}\backslash \{L\}$ is a compressed multicomplex in $\{R\}$. Now, for every multicomplex $M^{\prime} \subseteq M$, the image of $I^{M^{\prime}}$ under the natural quotient map $\fieldk[X] \to R$ is a monomial ideal of $R$, thus the assertion follows.
\end{proof}

For the rest of this section, let ${\bf a}\in \mathbb{P}^n$, let $\pi = (X_1, \dots, X_n)$ be an ordered partition of a non-empty set $X$, and let $\phi: X\to \overline{\mathbb{P}}$ be a map. For each $t\in [n]$, label the elements in $X_t$ by $X_t = \{x_{t,1}, x_{t,2}, \dots \}$, and fix a linear order on $X$ by $x_{i,j} > x_{i^{\prime}, j^{\prime}}$ if $j > j^{\prime}$; or $j = j^{\prime}$ and $i < i^{\prime}$ (cf. Definition \ref{defn-coloredQuot}). Define $\mathcal{M}_{{\bf a}, \pi} := \big\{m\in \mathcal{M}_X: \deg(m_{X_t}) \leq a_t\text{ for all }t\in [n]\big\}$, and let $\mathcal{M}_{{\bf a}, \pi}(\phi) := \mathcal{M}_{{\bf a}, \pi} \cap \mathcal{M}_X(\phi)$. Note that $(M, \pi)$ is an ${\bf a}$-colored multicomplex in $\mathcal{M}_X(\phi)$ if and only if $M$ is a multicomplex in $\mathcal{M}_{{\bf a}, \pi}(\phi)$. We say $(M, \pi)$ is {\it revlex} (reverse-lexicographic) if $M$ is a compressed multicomplex in $\mathcal{M}_{{\bf a}, \pi}(\phi)$ with respect to this given linear order on $X$. In particular, recall that $x_{i,j} > x_{i^{\prime}, j^{\prime}} \Leftrightarrow x_{i,j} <_{r\ell} x_{i^{\prime}, j^{\prime}}$.

\begin{proposition}\label{prop:revlexmultiequiv}
The following are equivalent:
\begin{enum*}
\item For every ${\bf a}$-colored multicomplex $(M,\pi)$ in $\mathcal{M}_X(\phi)$, there exists a revlex ${\bf a}$-colored multicomplex $(M^{\prime}, \pi)$ in $\mathcal{M}_X(\phi)$ such that $M$ and $M^{\prime}$ have the same $f$-vector.
\item For every $\boldsymbol{\lambda} \in \mathbb{P}^n$ satisfying $\boldsymbol{\lambda} \leq (|X_1|, \dots, |X_n|)$, the truncation of the colored quotient ring $\fieldk[X_{\boldsymbol{\lambda}}]/Q_{{\bf a}}$ induced by $\phi$ is Macaulay-Lex.
\end{enum*}
\end{proposition}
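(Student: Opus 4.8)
The plan is to read off the equivalence from Lemma~\ref{lemma:MacLexMulticomplexCorresp}, which is exactly the ``multicomplex $\leftrightarrow$ Macaulay--Lex'' dictionary we need, after reducing to a single finite ambient object. First I set up the translation. For a composition $\boldsymbol{\lambda} \leq (|X_1|,\dots,|X_n|)$, identify the variable $x_{i,j}$ of $X_{\boldsymbol{\lambda}}$ with the variable $x_{i,j}$ of $X$ (so $X_{\boldsymbol{\lambda}} \subseteq X$ and the colouring of $X_{\boldsymbol{\lambda}}$ is the restriction of $\pi$), and let $\phi$ also denote its restriction. Then $\{W(\phi_{\boldsymbol{\lambda}})\}$ is precisely the set of monomials of $\mathcal{M}_{{\bf a},\pi}(\phi)$ supported on $X_{\boldsymbol{\lambda}}$, it is a multicomplex on $X_{\boldsymbol{\lambda}}$, and $I^{\{W(\phi_{\boldsymbol{\lambda}})\}}=Q_{{\bf a}}+T^{\phi}$, so $W(\phi_{\boldsymbol{\lambda}}) = \fieldk[X_{\boldsymbol{\lambda}}]/I^{\{W(\phi_{\boldsymbol{\lambda}})\}}$. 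Thus, by Lemma~\ref{lemma:MacLexMulticomplexCorresp}, $W(\phi_{\boldsymbol{\lambda}})$ is Macaulay--Lex if and only if every multicomplex $N\subseteq\{W(\phi_{\boldsymbol{\lambda}})\}$ admits a compressed counterpart in $\{W(\phi_{\boldsymbol{\lambda}})\}$ with the same $f$-vector, where ``compressed'' is taken for the order on $X_{\boldsymbol{\lambda}}$ inherited from $X$.

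The technical heart is to reconcile ``compressed in $\{W(\phi_{\boldsymbol{\lambda}})\}$'' (order on $X_{\boldsymbol{\lambda}}$) with ``revlex in $\mathcal{M}_X(\phi)$'' (order on all of $X$). The key elementary fact is: if $X_{\boldsymbol{\lambda}}$ is an initial segment (``down-set'') of $X$ for the linear order of Definition~\ref{defn-coloredQuot}, then for every $d$ the set $\{W(\phi_{\boldsymbol{\lambda}})\}\cap \mathcal{M}_X^d$ is a revlex-segment set in $\mathcal{M}_{{\bf a},\pi}^d(\phi)$. Indeed, if a monomial $m$ uses only variables of $X_{\boldsymbol{\lambda}}$ and $m'>_{r\ell} m$, then at the largest variable $v$ at which $m,m'$ differ the exponent of $v$ in $m$ exceeds that in $m'$, so $v\in\supp(m)\subseteq X_{\boldsymbol{\lambda}}$; every variable of $\supp(m')$ that is $>v$ lies in $\supp(m)\subseteq X_{\boldsymbol{\lambda}}$, and every variable of $\supp(m')$ that is $\leq v$ lies in $X_{\boldsymbol{\lambda}}$ because $X_{\boldsymbol{\lambda}}$ is a down-set. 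Consequently, for any finite tuple $(f_d)$ with $f_d\leq |\{W(\phi_{\boldsymbol{\lambda}})\}\cap\mathcal{M}_X^d|$, the revlex-segment sets of sizes $f_d$ in $\mathcal{M}_{{\bf a},\pi}(\phi)$ and in the sub-ambient agree, so a compressed multicomplex in $\{W(\phi_{\boldsymbol{\lambda}})\}$ is literally a revlex ${\bf a}$-colored multicomplex in $\mathcal{M}_X(\phi)$ supported on $X_{\boldsymbol{\lambda}}$. Moreover, down-set compositions can be taken arbitrarily large: setting $\lambda_i=|X_i|$ for colours with $|X_i|<\infty$ and $\lambda_i=C$ for colours with $|X_i|=\infty$ gives a down-set for every large $C$.

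Granting this, $(ii)\Rightarrow(i)$ is immediate: an ${\bf a}$-colored multicomplex $(M,\pi)$ in $\mathcal{M}_X(\phi)$ is finite and so uses finitely many variables, as do the revlex-segment sets of the sizes $f_d(M)$; choosing $C$ so that the down-set composition $\boldsymbol{\mu}$ above contains all of these variables, $M$ becomes a multicomplex inside $\{W(\phi_{\boldsymbol{\mu}})\}$, (ii) together with Lemma~\ref{lemma:MacLexMulticomplexCorresp} supplies a compressed counterpart, and by the previous paragraph this is a revlex ${\bf a}$-colored multicomplex in $\mathcal{M}_X(\phi)$ with the same $f$-vector. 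Running the same argument in reverse shows that (i) already forces $W(\phi_{\boldsymbol{\mu}})$ to be Macaulay--Lex for \emph{every} down-set composition $\boldsymbol{\mu}$.

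For $(i)\Rightarrow(ii)$ the remaining task, and the main obstacle, is to pass from down-set compositions to an arbitrary $\boldsymbol{\lambda}$. My approach is to peel variables off the top using Theorem~\ref{thm:MacLexProperties}: deleting the largest variable of $\fieldk[X_{\boldsymbol{\nu}}]$ turns $\{W(\phi_{\boldsymbol{\nu}})\}$ into the monomial set of another truncated colored quotient ring and preserves the Macaulay--Lex property (parts \ref{thm-part:truncatedMacLexIdeal} and \ref{thm-part:lex-plus-M}). The subtlety is that a general $X_{\boldsymbol{\lambda}}$ is \emph{not} an initial segment of $X$, so ``remove the largest variable'' starting from a big initial segment traces only through down-set compositions and need not land on $X_{\boldsymbol{\lambda}}$; one must instead bring in the explicit classification. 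Concretely, I would use the fact that (i) makes $W(\phi_{\boldsymbol{\mu}})$ Macaulay--Lex for large $\boldsymbol{\mu}$, invoke Theorem~\ref{main-thm-MacLex} together with the obstructions of Corollary~\ref{cor:truncatedNotMacLex} (and Proposition~\ref{prop:counter-eg-a>d+1}) to pin down the resulting constraints on $({\bf a},\phi)$ and the $|X_i|$, and then verify directly that under those constraints every $W(\phi_{\boldsymbol{\lambda}})$ is Macaulay--Lex — equivalently, that whenever some truncation fails to be Macaulay--Lex one can exhibit, exactly as in Proposition~\ref{prop:counter-eg-a>d+1}, an ${\bf a}$-colored multicomplex with no revlex counterpart, contradicting (i). Throughout, one only ever needs finite multicomplexes and finite $\boldsymbol{\lambda}$, so the possible infinitude of $X$ causes no difficulty.
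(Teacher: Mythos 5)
Your proposal follows the same overall plan as the paper --- reduce both sides to Lemma~\ref{lemma:MacLexMulticomplexCorresp} by working inside a finite $X_{\boldsymbol{\lambda}}$ --- but you add a genuine and correct observation that the paper's proof leaves implicit: that $X_{\boldsymbol{\lambda}}$ need \emph{not} be an initial segment (down-set) of $X$ for the chosen linear order (e.g. for $n=2$, $|X_1|=|X_2|=2$, the set $X_{(2,1)}$ is not one), and hence ``compressed in $\{W(\phi_{\boldsymbol{\lambda}})\}$'' and ``revlex in $\mathcal{M}_{{\bf a},\pi}(\phi)$'' are not automatically the same condition. Your lemma that when $X_{\boldsymbol{\lambda}}$ \emph{is} a down-set the set $\{W(\phi_{\boldsymbol{\lambda}})\}\cap\mathcal{M}_X^d$ is a revlex-segment of $\mathcal{M}_{{\bf a},\pi}^d(\phi)$ is correct, and your treatment of $(ii)\Rightarrow(i)$ via a sufficiently large down-set composition $\boldsymbol{\mu}$ is a complete and clean argument for that implication.

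The gap is in $(i)\Rightarrow(ii)$. You correctly diagnose that the reduction via Lemma~\ref{lemma:MacLexMulticomplexCorresp} and the down-set lemma only yields, from (i), the Macaulay--Lex property of $W(\phi_{\boldsymbol{\mu}})$ for \emph{down-set} compositions $\boldsymbol{\mu}$, whereas (ii) asserts it for every $\boldsymbol{\lambda}\leq(|X_1|,\dots,|X_n|)$, and that peeling off the largest variable (Theorem~\ref{thm:MacLexProperties}, parts~\ref{thm-part:lex-plus-M} and~\ref{thm-part:truncatedMacLexIdeal}) traces only through down-set compositions. But the remedy you then propose --- invoke Theorem~\ref{main-thm-MacLex} and Corollary~\ref{cor:truncatedNotMacLex} to ``pin down the resulting constraints'' on $({\bf a},\phi,|X_i|)$ and ``verify directly'' that every $W(\phi_{\boldsymbol{\lambda}})$ is Macaulay--Lex under those constraints --- is only an outline: the constraints are never identified, the verification is never carried out, and Corollary~\ref{cor:truncatedNotMacLex} carries extra hypotheses (notably $\boldsymbol{\lambda}\geq 2\cdot{\bf 1}_n+\boldsymbol{\delta}_n$ and a lower bound on $\sum_{x\in X_t\setminus\{x_{t,1}\}}\phi(x)$) that the proposition does not assume, so it cannot be applied unconditionally. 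As written, $(i)\Rightarrow(ii)$ is not proved. (For what it is worth, the paper's own proof is terser still: it asserts outright that any compressed multicomplex in $\mathcal{M}_{{\bf a},\pi}(\phi)$ with the $f$-vector of $M$ lies in $\mathcal{M}_{{\bf a},\pi_{\boldsymbol{\lambda}}}(\phi_{\boldsymbol{\lambda}})$ for \emph{any} $\boldsymbol{\lambda}$ with $M\subseteq\mathcal{M}_{X_{\boldsymbol{\lambda}}}$, which is exactly the point you rightly flag as requiring the down-set hypothesis --- so you have in fact caught a real imprecision in the source rather than introduced a new problem. But catching the difficulty is not the same as resolving it, and your attempt stops short of a proof.)
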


\begin{proof}
Let $|\pi| := (|X_1|, \dots, |X_n|) \in \overline{\mathbb{P}}^n$, and let $X_t^{[r]}$ be the subset $\{x_{i,t}: i\in [r]\} \subseteq X_t$ for every $r\in \mathbb{P}$, $t\in [n]$. For each ${\boldsymbol{\lambda}} = (\lambda_1, \dots, \lambda_n) \in \mathbb{P}^n$ satisfying $\boldsymbol{\lambda} \leq |\pi|$, consider $X_{\boldsymbol{\lambda}}$ as a subposet of $X$, let $\pi_{\boldsymbol{\lambda}}$ be the ordered partition $(X_1^{[\lambda_1]}, \dots, X_n^{[\lambda_n]})$ of $X_{\boldsymbol{\lambda}}$, and let $\phi_{\boldsymbol{\lambda}}: X_{\boldsymbol{\lambda}} \to \overline{\mathbb{P}}$ be the restriction of $\phi$ to $X_{\boldsymbol{\lambda}}$ as its domain.

Suppose $M$ is a multicomplex in $\mathcal{M}_{{\bf a}, \pi}(\phi)$. Choose any $\boldsymbol{\lambda} \in \mathbb{P}^n$ such that $\boldsymbol{\lambda} \leq |\pi|$ and $M \subseteq \mathcal{M}_{X_{\boldsymbol{\lambda}}}$, which is possible since ${\bf a}\in \mathbb{P}^n$ implies $M$ is finite. Every compressed multicomplex $M^{\prime}$ in $\mathcal{M}_{{\bf a}, \pi}(\phi)$ with the same $f$-vector as $M$ is necessarily contained in $\mathcal{M}_{{\bf a}, \pi_{\boldsymbol{\lambda}}}(\phi_{\boldsymbol{\lambda}})$, hence $(M^{\prime}, \pi)$ is a revlex ${\bf a}$-colored multicomplex in $\mathcal{M}_{X_{\boldsymbol{\lambda}}}(\phi_{\boldsymbol{\lambda}}) \subseteq \mathcal{M}_X(\phi)$ for each such compressed multicomplex $M^{\prime}$. Since $\mathcal{M}_{X_{\boldsymbol{\lambda}}}(\phi_{\boldsymbol{\lambda}})$ is a multicomplex on $X_{\boldsymbol{\lambda}}$, Lemma \ref{lemma:MacLexMulticomplexCorresp} says that for every $\boldsymbol{\lambda} \in \mathbb{P}^n$ satisfying $\boldsymbol{\lambda} \leq |\pi|$, the following are equivalent:
\begin{itemize}
\item For every ${\bf a}$-colored multicomplex in $\mathcal{M}_X(\phi) \cap \mathcal{M}_{X_{\boldsymbol{\lambda}}}$, there exists a revlex ${\bf a}$-colored multicomplex $(M^{\prime}, \pi)$ in $\mathcal{M}_X(\phi)$ such that $M$ and $M^{\prime}$ have the same $f$-vector.
\item The ideal $I^{\mathcal{M}_{X_{\boldsymbol{\lambda}}}(\phi_{\boldsymbol{\lambda}})} = \big(\sum_{i\in [n]} \big\langle X_i^{[\lambda_i]}\big\rangle^{a_i+1}\big) + \big\langle\big\{ x^{\phi(x)+1}: x\in X_{\boldsymbol{\lambda}}\big\} \big\rangle$ is a Macaulay-Lex ideal of $\fieldk[X_{\boldsymbol{\lambda}}]$.
\end{itemize}
The assertion then follows by considering all possible $\boldsymbol{\lambda} \in \mathbb{P}^n$ satisfying $\boldsymbol{\lambda} \leq |\pi|$.
\end{proof}

\begin{theorem}\label{thm:coloredKK-multicomplexVer}
For every ${\bf 1}_n$-colored multicomplex $(M, \pi)$, there exists a revlex ${\bf 1}_n$-colored multicomplex $(M^{\prime}, \pi)$ such that $M$ and $M^{\prime}$ have the same $f$-vector.
\end{theorem}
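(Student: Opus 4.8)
The plan is to deduce this from Theorem \ref{main-thm-MacLex} via the dictionary set up in Proposition \ref{prop:revlexmultiequiv}. First I would record two simplifications that come from the fact that $\phi(x)\geq 1$ for every $x$ (recall $\overline{\mathbb{P}}=\mathbb{P}\cup\{\infty\}$): since $\mathcal{M}_{{\bf 1}_n,\pi}$ already consists of squarefree monomials using at most one variable from each block $X_i$, we have $\mathcal{M}_{{\bf 1}_n,\pi}(\phi)=\mathcal{M}_{{\bf 1}_n,\pi}$, so being a revlex ${\bf 1}_n$-colored multicomplex is independent of $\phi$; and since every generator $x^{\phi(x)+1}$ of $T^{\phi}$ is a multiple of $x^{2}\in Q_{{\bf 1}_n}$, each truncation of $\fieldk[X_{\boldsymbol{\lambda}}]/Q_{{\bf 1}_n}$ induced by $\phi$ is just $\fieldk[X_{\boldsymbol{\lambda}}]/Q_{{\bf 1}_n}$ itself. (As in the proof of Proposition \ref{prop:revlexmultiequiv}, a ${\bf 1}_n$-colored multicomplex is finite, so it lies in $\mathcal{M}_{X_{\boldsymbol{\lambda}}}$ for some $\boldsymbol{\lambda}\in\mathbb{P}^n$.)

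Next I would apply Proposition \ref{prop:revlexmultiequiv} with ${\bf a}={\bf 1}_n$: it reduces the theorem to showing that for every $\boldsymbol{\lambda}\in\mathbb{P}^n$ with $\boldsymbol{\lambda}\leq(|X_1|,\dots,|X_n|)$, the colored quotient ring $\fieldk[X_{\boldsymbol{\lambda}}]/Q_{{\bf 1}_n}$ of type ${\bf 1}_n$ and composition $\boldsymbol{\lambda}$ is Macaulay-Lex. But every colored quotient ring of type ${\bf 1}_n$ is mixed: take $r=n$ in the definition of mixed, so that ``$a_i=1$ for all $i\in[r]$'' holds by hypothesis while the conditions ``$\lambda_i=1$ for all $i\in[n]\backslash[r]$'' and ``$a_{r+1}\leq\dots\leq a_n$'' are vacuous. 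Hence $\fieldk[X_{\boldsymbol{\lambda}}]/Q_{{\bf 1}_n}$ is Macaulay-Lex by Theorem \ref{main-thm-MacLex}\ref{cond:mixed}, which finishes the proof. (Alternatively one can avoid invoking Theorem \ref{main-thm-MacLex} and instead combine Theorem \ref{thm:color-comp}\ref{item:actual-result} with Corollary \ref{cor:coloredMacLexChar}, since $\fieldk[X_{\boldsymbol{\lambda}}]/Q_{{\bf 1}_n}$ is mixed and ${\bf 1}_n\in\mathbb{P}^n$.)

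There is no genuine obstacle remaining at this stage: all the combinatorial content has been absorbed into Theorem \ref{main-thm-MacLex}, and ultimately into the mixed case of Theorem \ref{thm:CruxPart}, whose proof subsumes the ``difficult'' part of the Frankl-F\"{u}redi-Kalai argument — so the present statement is really a corollary of that machinery (and, by construction of the dictionary, it is equivalent to the Frankl-F\"{u}redi-Kalai theorem). The only two points worth checking carefully are: (i) that a type-${\bf 1}_n$ colored quotient ring with an \emph{arbitrary} composition is covered by condition \ref{cond:mixed} of Theorem \ref{main-thm-MacLex}, with no hypothesis $\lambda_1\geq\dots\geq\lambda_n$ — this is precisely the strengthening of Mermin-Murai's theorem noted after Theorem \ref{main-thm-MacLex}; and (ii) that Proposition \ref{prop:revlexmultiequiv} accommodates a possibly infinite vertex set $X$ by quantifying over all finite subposets $X_{\boldsymbol{\lambda}}\subseteq X$, each giving a finite colored quotient ring.
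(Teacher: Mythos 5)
Your proposal is correct and takes precisely the same route as the paper, whose proof is the one-line citation of Proposition \ref{prop:revlexmultiequiv} together with the case ${\bf a} = {\bf 1}_n$ of Theorem \ref{main-thm-MacLex}. You are simply unpacking the two routine verifications implicit in that citation — that $T^{\phi} \subseteq Q_{{\bf 1}_n}$ so the truncation is the ring itself (and the notion of revlex is $\phi$-independent), and that every type-${\bf 1}_n$ colored quotient ring is mixed by taking $r = n$ — and both checks are accurate.
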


\begin{proof}
This follows from Proposition \ref{prop:revlexmultiequiv} and the case ${\bf a} = {\bf 1}_n$ of Theorem \ref{main-thm-MacLex}.
\end{proof}

\begin{lemma}\label{lemma:truncated-n=1-case}
Let $n\in \mathbb{P}$ and $a, \alpha_1, \dots, \alpha_n \in \overline{\mathbb{P}}$. Then the ideal $I := \big\langle x_1, \dots, x_n\big\rangle^{a+1} + \big\langle x_1^{\alpha_1+1}, \dots, x_n^{\alpha_n+1} \big\rangle$ in $S$ is Macaulay-Lex if and only if $\min\{a, \alpha_1\} \leq \dots \leq \min\{a, \alpha_n\}$. [Note: By definition, $x_i^{\infty} = 0$.]
\end{lemma}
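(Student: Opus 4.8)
The plan is to reduce the statement to a normalized form and then dispatch the two implications using only Theorem~\ref{thm:MacLexProperties} and the Clements-Lindstr\"{o}m theorem~\cite{ClementsLindstrom1969}. Write $b_i := \min\{a,\alpha_i\} \geq 1$ (note $a,\alpha_i\geq 1$). Since every monomial of degree $\geq a+1$ lies in $\langle x_1,\dots,x_n\rangle^{a+1}$ (and $\langle x_1,\dots,x_n\rangle^{a+1}=0$ when $a=\infty$), a direct check shows that $\langle x_i^{\alpha_i+1}\rangle$ and $\langle x_i^{b_i+1}\rangle$ contribute the same ideal modulo $\langle x_1,\dots,x_n\rangle^{a+1}$; hence
\[
I \;=\; \langle x_1,\dots,x_n\rangle^{a+1} \,+\, \langle x_1^{b_1+1},\dots,x_n^{b_n+1}\rangle .
\]
In particular $I$ depends only on $(b_1,\dots,b_n)$, and the condition in the statement is precisely $b_1\leq\dots\leq b_n$. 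So it suffices to prove that $I$ is Macaulay-Lex if and only if $b_1\leq\dots\leq b_n$. The case $n=1$ is immediate ($I=\langle x_1^{b_1+1}\rangle$, and $\fieldk[x_1]/I$ is trivially Macaulay-Lex), so assume $n\geq 2$.

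For sufficiency, assume $b_1\leq\dots\leq b_n$. The ideal $\langle x_1,\dots,x_n\rangle^{a+1}$ is a lex ideal of $S$: its degree-$d$ component equals $S_d$ for $d\geq a+1$ and is zero otherwise, so each component is trivially a lex-segment. Since $1\leq b_1\leq\dots\leq b_n$, the ideal $\langle x_1^{b_1+1},\dots,x_n^{b_n+1}\rangle$ is Macaulay-Lex by the Clements-Lindstr\"{o}m theorem~\cite{ClementsLindstrom1969}. Thus $I$ is a lex-plus-$M$ ideal with $M$ Macaulay-Lex, so $I$ is Macaulay-Lex by Theorem~\ref{thm:MacLexProperties}\ref{thm-part:lex-plus-M}.

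For necessity I argue by contraposition: suppose $b_t>b_{t+1}$ for some $t\in[n-1]$, and derive a contradiction from the assumption that $I$ is Macaulay-Lex. Let $L:=\langle x_1,\dots,x_{t-1}\rangle$, a lex ideal of $S$, so $I+L$ is Macaulay-Lex by Theorem~\ref{thm:MacLexProperties}\ref{thm-part:lex-plus-M}. Setting $x_1=\dots=x_{t-1}=0$ identifies $S/(I+L)$ with $\fieldk[x_t,\dots,x_n]/M'$ compatibly with the linear order $x_t>\dots>x_n$, where $M'=\langle x_t,\dots,x_n\rangle^{a+1}+\langle x_t^{b_t+1},\dots,x_n^{b_n+1}\rangle$; exactly as in the proof of Proposition~\ref{prop:necessaryCondOnType}, it follows that $M'$ is Macaulay-Lex. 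The minimal degree of a generator of $M'$ is $d:=\min\{b_i+1:t\leq i\leq n\}$, and since $b_t>b_{t+1}$ and every $b_i\leq a$ we have $2\leq d\leq b_{t+1}+1\leq b_t\leq a$. By Theorem~\ref{thm:MacLexProperties}\ref{thm-part:minDegGenerator} (with $x_t$ playing the role of the largest variable) there is some $i\in\{t,\dots,n\}$ with $x_t^{d-1}x_i\in M'$. But $\deg(x_t^{d-1}x_i)=d\leq a$, so this monomial is not in $\langle x_t,\dots,x_n\rangle^{a+1}$, hence $x_j^{b_j+1}$ divides $x_t^{d-1}x_i$ for some $j\in\{t,\dots,n\}$. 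A short case check excludes every possibility: $j=t$ would force $d\geq b_t+1$ (if $i=t$) or $d\geq b_t+2$ (if $i\neq t$), contradicting $d\leq b_t$; $j=i\neq t$ would force $b_i\leq 0$; and $j\notin\{t,i\}$ is impossible since then $x_j\nmid x_t^{d-1}x_i$. This contradiction finishes the proof.

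I do not expect a genuine obstacle here; the argument is essentially bookkeeping on top of the already-established toolkit. The one place to be careful is the reduction step: one must check that $I$ really is unchanged upon replacing each $\alpha_i$ by $b_i=\min\{a,\alpha_i\}$, including the degenerate cases $a=\infty$ and $\alpha_i=\infty$, and that $\langle x_1,\dots,x_n\rangle^{a+1}$ is a lex ideal — once these are in place, the Clements-Lindstr\"{o}m theorem and Theorem~\ref{thm:MacLexProperties} do all the work.
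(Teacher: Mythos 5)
Your proof is correct, and it takes a genuinely different route from the paper's. The paper proves this lemma by directly comparing $R:=S/I$ to the Clements--Lindstr\"{o}m ring $T:=S/\langle x_1^{\beta_1+1},\dots,x_n^{\beta_n+1}\rangle$ with $\beta_i=\min\{a,\alpha_i\}$: since $\{R_d\}=\{T_d\}$ for all $d\leq a$ and $R_d=0$ for $d>a$, the shadow-containment criterion of Theorem~\ref{thm:charMacLexRings}\ref{list:charMacLex-lowershadContain} for $R$ reduces to the same criterion for $T$ on the nonzero degrees, and then Remark~\ref{remark:caseCompositionAllOnes} is invoked to get the ``if and only if'' in terms of $\beta_1\leq\dots\leq\beta_n$. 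You instead first normalize $I$ to the form $\langle x_1,\dots,x_n\rangle^{a+1}+\langle x_1^{b_1+1},\dots,x_n^{b_n+1}\rangle$, then split the equivalence into two one-sided arguments: for sufficiency you observe that $\langle x_1,\dots,x_n\rangle^{a+1}$ is itself a lex ideal, so $I$ is lex-plus-(a Macaulay-Lex ideal) and Theorem~\ref{thm:MacLexProperties}\ref{thm-part:lex-plus-M} applies; for necessity you pass to the quotient by $\langle x_1,\dots,x_{t-1}\rangle$ and apply the minimal-degree-generator criterion (Theorem~\ref{thm:MacLexProperties}\ref{thm-part:minDegGenerator}), with an explicit case check that no generator $x_j^{b_j+1}$ can divide $x_t^{d-1}x_i$. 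Each approach has something to recommend it: the paper's is shorter once Remark~\ref{remark:caseCompositionAllOnes} is in hand, because the entire equivalence is absorbed into the degree-truncated identification of $R$ with $T$; yours makes the two directions fully explicit (especially the necessity direction, which the paper leaves compressed into ``and the assertion follows''), and your sufficiency direction has the small added appeal of noticing that the truncation ideal $\langle x_1,\dots,x_n\rangle^{a+1}$ is lex, which lets you cite Theorem~\ref{thm:MacLexProperties}\ref{thm-part:lex-plus-M} directly rather than re-examining shadows.
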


\begin{proof}
Let $R = \bigoplus_{d\in \mathbb{N}} R_d := S/I$. The case $a = \infty$ follows from Remark \ref{remark:caseCompositionAllOnes}, so assume $a<\infty$. Since every $R_d$-monomial space satisfying $d>a$ is empty, Theorem \ref{thm:charMacLexRings} says that $R$ is Macaulay-Lex if and only if $\partial(\Revlex_{R_d}(A)) \subseteq \Revlex_{R_{d-1}}(\partial(A))$ for all $d\in [a]$ and every $R_d$-monomial space $A$. Fix some $d\in [a]$, and choose an arbitrary $R_d$-monomial space $A$. Next, let $\beta_i = \min\{a, \alpha_i\}$ for each $i\in [n]$, let $T = \bigoplus_{d\in \mathbb{N}} T_d := S/J$, where $J := \langle x_1^{\beta_1+1}, \dots, x_n^{\beta_n+1}\rangle \subseteq S$, and notice that Remark \ref{remark:caseCompositionAllOnes} says $T$ is Macaulay-Lex if and only if $\beta_1 \leq \dots \leq \beta_n$. Since $d\leq a$ implies $\{R_d\} = \{T_d\}$, it then follows from Theorem \ref{thm:charMacLexRings} that $\partial(\Revlex_{R_d}(A)) = \partial(\Revlex_{T_d}(A) \subseteq \Revlex_{T_{d-1}}(\partial(A)) = \Revlex_{R_{d-1}}(\partial(A))$ if and only if $\beta_1 \leq \dots \leq \beta_n$, and the assertion follows.
\end{proof}

\begin{theorem}\label{thm:revlexdoesntcharacterize}
If $\pi$ and $\phi$ satisfy $(|X_1|, \dots, |X_n|) \geq 2\cdot {\bf 1}_n + \boldsymbol{\delta}_n$, and $\sum_{x\in X_t\backslash\{x_{t,1}\}} \phi(x) \geq a_t$ for all $t\in [n]$, then the following are equivalent:
\begin{enum*}
\item For every ${\bf a}$-colored multicomplex $(M, \pi)$ in $\mathcal{M}_X(\phi)$, there exists a revlex ${\bf a}$-colored multicomplex $(M^{\prime}, \pi)$ in $\mathcal{M}_X(\phi)$ such that $M$ and $M^{\prime}$ have the same $f$-vector.\label{cond:revlexcharwhen}
\item Either $n=1$ and $x > x^{\prime}$ for all $x, x^{\prime} \in X$ satisfying $\min\{a_1, \phi(x)\} < \min\{a_1, \phi(x^{\prime})\}$ (if any); or ${\bf a} = {\bf 1}_n$.\label{cond:revlexcharexplicit}
\end{enum*}
\end{theorem}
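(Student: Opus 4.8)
The plan is to reduce the statement to Proposition~\ref{prop:revlexmultiequiv}, which says that condition~\ref{cond:revlexcharwhen} is equivalent to the assertion $(\ast)$: \emph{for every $\boldsymbol{\lambda}\in\mathbb{P}^n$ with $\boldsymbol{\lambda}\le(|X_1|,\dots,|X_n|)$, the truncation $W(\phi_{\boldsymbol{\lambda}})$ of $\fieldk[X_{\boldsymbol{\lambda}}]/Q_{\bf a}$ induced by $\phi$ is Macaulay-Lex}, where $\phi_{\boldsymbol{\lambda}}$ is the restriction of $\phi$ to $X_{\boldsymbol{\lambda}}$ (the set of the first $\lambda_t$ elements of each $X_t$). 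So it suffices to prove $(\ast)\Leftrightarrow\ref{cond:revlexcharexplicit}$. Since ${\bf a}\in\mathbb{P}^n$, each $a_t$ is a finite positive integer; this finiteness, together with the standing hypotheses $(|X_1|,\dots,|X_n|)\ge 2\cdot{\bf 1}_n+\boldsymbol{\delta}_n$ and $\sum_{x\in X_t\setminus\{x_{t,1}\}}\phi(x)\ge a_t$, is what produces the compositions $\boldsymbol{\lambda}$ used below.

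For $\ref{cond:revlexcharexplicit}\Rightarrow(\ast)$ I would treat the two disjuncts separately. If ${\bf a}={\bf 1}_n$, then every monomial of an ${\bf a}$-colored multicomplex is squarefree, so (as $\phi\ge 1$) the constraint of lying in $\mathcal{M}_X(\phi)$ is vacuous, and condition~\ref{cond:revlexcharwhen} for ${\bf a}={\bf 1}_n$ is exactly Theorem~\ref{thm:coloredKK-multicomplexVer}; equivalently each $W(\phi_{\boldsymbol{\lambda}})$ equals the colored squarefree ring $\fieldk[X_{\boldsymbol{\lambda}}]/Q_{\bf a}$, which is Macaulay-Lex by Theorem~\ref{main-thm-MacLex}\ref{cond:mixed} (with $r=n$). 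If instead $n=1$, fix $\lambda_1\le|X_1|$ and relabel $x_{1,1}<\dots<x_{1,\lambda_1}$ in decreasing order to match the convention of Lemma~\ref{lemma:truncated-n=1-case}; since $W(\phi_{\boldsymbol{\lambda}})=\fieldk[x_{1,1},\dots,x_{1,\lambda_1}]/\big(\langle x_{1,1},\dots,x_{1,\lambda_1}\rangle^{a_1+1}+\langle x_{1,j}^{\phi(x_{1,j})+1}:j\in[\lambda_1]\rangle\big)$, that lemma gives that $W(\phi_{\boldsymbol{\lambda}})$ is Macaulay-Lex iff $\min\{a_1,\phi(x_{1,\lambda_1})\}\le\dots\le\min\{a_1,\phi(x_{1,1})\}$, and the ordering condition in~\ref{cond:revlexcharexplicit} is precisely the statement that this chain of inequalities holds for every such $\lambda_1$. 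Hence $(\ast)$ holds in both subcases.

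For the contrapositive $\neg\ref{cond:revlexcharexplicit}\Rightarrow\neg(\ast)$, note that $\neg\ref{cond:revlexcharexplicit}$ forces ${\bf a}\neq{\bf 1}_n$ together with either (I) $n\ge 2$, or (II) $n=1$ (so $a_1\ge 2$) and there exist $x<x'$ in $X$ with $\min\{a_1,\phi(x)\}<\min\{a_1,\phi(x')\}$. In case~(II), take $\lambda_1\le|X_1|$ large enough that $\{x_{1,1},\dots,x_{1,\lambda_1}\}$ contains both $x$ and $x'$; then the chain of inequalities above fails, so Lemma~\ref{lemma:truncated-n=1-case} shows $W(\phi_{\boldsymbol{\lambda}})$ is not Macaulay-Lex and $(\ast)$ fails. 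In case~(I), since each $a_t<\infty$ and $\sum_{x\in X_t\setminus\{x_{t,1}\}}\phi(x)\ge a_t$ with every $\phi$-value $\ge 1$, there is a finite $\mu_t\le|X_t|$ with $\sum_{j=2}^{\mu_t}\phi(x_{t,j})\ge a_t$; putting $\lambda_t:=\max\{\mu_t,2\}$ for $t<n$ and $\lambda_n:=\max\{\mu_n,3\}$ gives, using $(|X_1|,\dots,|X_n|)\ge 2\cdot{\bf 1}_n+\boldsymbol{\delta}_n$, a composition with $2\cdot{\bf 1}_n+\boldsymbol{\delta}_n\le\boldsymbol{\lambda}\le(|X_1|,\dots,|X_n|)$ and $\sum_{j=2}^{\lambda_t}\phi(x_{t,j})\ge a_t$ for all $t$; then Corollary~\ref{cor:truncatedNotMacLex} shows $W(\phi_{\boldsymbol{\lambda}})$ is not Macaulay-Lex and $(\ast)$ fails.

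I do not expect a serious obstacle here: the real content is already contained in Proposition~\ref{prop:counter-eg-a>d+1}, Corollary~\ref{cor:truncatedNotMacLex}, and Theorem~\ref{main-thm-MacLex}. The only delicate points are bookkeeping: keeping the two linear-order conventions straight when applying Lemma~\ref{lemma:truncated-n=1-case} in the $n=1$ case and converting ``for every $\lambda_1$'' into the single chain of inequalities in~\ref{cond:revlexcharexplicit}; and checking in case~(I) that a composition $\boldsymbol{\lambda}$ can be chosen meeting all hypotheses of Corollary~\ref{cor:truncatedNotMacLex} while still satisfying $\boldsymbol{\lambda}\le(|X_1|,\dots,|X_n|)$, which is exactly where the hypotheses $(|X_1|,\dots,|X_n|)\ge 2\cdot{\bf 1}_n+\boldsymbol{\delta}_n$, the finiteness of the $a_t$, and $\sum_{x\in X_t\setminus\{x_{t,1}\}}\phi(x)\ge a_t$ are all used.
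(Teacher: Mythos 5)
Your proof is correct and follows essentially the same route as the paper's: both use Proposition~\ref{prop:revlexmultiequiv} to reduce to a Macaulay-Lex condition on truncations $W(\phi_{\boldsymbol{\lambda}})$ for all $\boldsymbol{\lambda}\le(|X_1|,\dots,|X_n|)$, then appeal to Corollary~\ref{cor:truncatedNotMacLex} for the ``only if'' direction when $n>1$, to Theorem~\ref{thm:coloredKK-multicomplexVer} when ${\bf a}={\bf 1}_n$, and to Lemma~\ref{lemma:truncated-n=1-case} when $n=1$. You merely spell out the bookkeeping the paper leaves implicit (the explicit choice of $\boldsymbol{\lambda}$ satisfying the hypotheses of Corollary~\ref{cor:truncatedNotMacLex}, and the relabeling needed to match the linear-order convention of Lemma~\ref{lemma:truncated-n=1-case}), all of which checks out.
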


Before proving Theorem \ref{thm:revlexdoesntcharacterize}, observe that Theorem \ref{thm:revlexdoesntcharacterize} implies Theorem \ref{main-thm-f-vectors-colored}. Indeed, for each ${\bf a}$-colored multicomplex $(M, \pi)$ in $\mathcal{M}_X(\phi)$, since we do not require every $x\in X$ to be in $M$, we may assume without loss of generality that $(|X_1|, \dots, |X_n|) = \boldsymbol{\infty}_n$ by adding new variables if necessary. Consequently, the map $\phi$ trivially satisfies $\sum_{x\in X_t\backslash\{x_{t,1}\}} \phi(x) = \infty \geq a_t$ for every $t\in [n]$, thus by considering $\phi = {\bf 1}$ and the constant map $\phi:X \to \overline{\mathbb{P}}$ defined by $x\mapsto \infty$ for all $x\in X$, Theorem \ref{main-thm-f-vectors-colored} follows from Theorem \ref{thm:revlexdoesntcharacterize}.

\vspace{0.65em}
\begin{paragraph}{\it Proof of Theorem \ref{thm:revlexdoesntcharacterize}.}
If $n>1$ and \ref{cond:revlexcharwhen} holds, then ${\bf a} = {\bf 1}_n$ by Corollary \ref{cor:truncatedNotMacLex} and Proposition \ref{prop:revlexmultiequiv}. Conversely, if $n>1$ and ${\bf a} = {\bf 1}_n$, then \ref{cond:revlexcharwhen} holds by Theorem \ref{thm:coloredKK-multicomplexVer}. As for the case $n=1$, i.e. ${\bf a} = (a_1) \in \mathbb{P}$ and $\pi = (X)$, Proposition \ref{prop:revlexmultiequiv} and Lemma \ref{lemma:truncated-n=1-case} together imply \ref{cond:revlexcharwhen} holds if and only if $\min\{a_1, \phi(x_{1,1})\} \geq \dots \geq \min\{a_1, \phi(x_{1,\lambda})\}$ for every $\lambda\in \mathbb{P}$ satisfying $\lambda \leq |X|$, or equivalently, $x > x^{\prime}$ for all $x, x^{\prime} \in X$ satisfying $\min\{a_1, \phi(x)\} < \min\{a_1, \phi(x^{\prime})\}$ (if any).
\end{paragraph}

\section{Further remarks}\label{sec:FurtherRemarks}
\subsection{Colored Kruskal-Katona theorem}\label{subsec:ColoredKKThm}
The Frankl-F\"{u}redi-Kalai theorem~\cite{FFK1988} is sometimes known as the colored Kruskal-Katona theorem, since it is an extension of the Kruskal-Katona theorem to colored $k$-uniform hypergraphs. Although the Frankl-F\"{u}redi-Kalai theorem was originally formulated in terms of the minimum shadow sizes of $k$-uniform $n$-colored hypergraphs (see \cite{London1994}), it is equivalent to Theorem \ref{thm:coloredKK-multicomplexVer} in the case when $\boldsymbol{\lambda} = (\lambda_1, \dots, \lambda_n) := (|X_1|, \dots, |X_n|) \in \overline{\mathbb{P}}^n$ satisfies $\lambda_1 \leq \dots \leq \lambda_n \leq \lambda_1 + 1$. This includes the special case $\boldsymbol{\lambda} = \boldsymbol{\infty}_n$.

Note that Claim 4.2(ii) in \cite{FFK1988} is not true, hence the original proof of the Frankl-F\"{u}redi-Kalai theorem in \cite{FFK1988} is incorrect as stated. However, London~\cite{London1994} gave a different (and correct) proof, Engel~\cite[Chap. 8]{book:EngelSpernerTheory} gave a proof using properties of Macaulay posets, while Mermin and Murai~\cite{MerminMurai2010} gave a proof in the language of monomial ideals, so the statement of the theorem is still true. In particular, London proved Theorem \ref{thm:coloredKK-multicomplexVer} in the case when $\lambda_1 \leq \dots \leq \lambda_n \leq \lambda_1 + 1$, while both Engel and Mermin-Murai proved Theorem \ref{thm:coloredKK-multicomplexVer} in the case when $\lambda_n \leq \dots \leq \lambda_1$. In fact, Theorem \ref{thm:coloredKK-multicomplexVer} holds for arbitrary $\boldsymbol{\lambda} \in \overline{\mathbb{P}}^n$.

On a related note, it was pointed out in \cite{book:BilleraBjornerHandbookDiscCompGeom} that the uniqueness claim in \cite[Lem. 1.1]{FFK1988} is incorrect, which makes the $\partial_k^{(r)}(\cdot)$ operator introduced in \cite{FFK1988} not well-defined. The numerical version of the Frankl-F\"{u}redi-Kalai theorem stated in \cite{book:BilleraBjornerHandbookDiscCompGeom} includes a fix suggested by J. Eckhoff.

\subsection{Fine $f$-vectors of generalized colored complexes}\label{subsec:FineFVectors}
Let ${\bf a} \in \mathbb{P}^n$, and let $(\Delta, \pi)$ be an ${\bf a}$-colored complex. For each ${\bf b} = (b_1, \dots, b_n) \in \mathbb{N}^n$ satisfying ${\bf b} \leq {\bf a}$, let $f_{{\bf b}}$ be the number of faces $F$ of $\Delta$ such that $|F \cap V_i| = b_i$. The array of integers $\{f_{{\bf b}}\}_{{\bf b} \leq {\bf a}}$ is called the {\it fine $f$-vector} of $(\Delta, \pi)$, and it is a refinement of the $f$-vector $(f_0, \dots, f_{\dim \Delta})$ of $\Delta$ in the sense that $f_i = \sum_{|{\bf b}| = i+1} f_{{\bf b}}$ for all $0 \leq i \leq \dim \Delta$. As pointed out by Bj\"{o}rner-Frankl-Stanley~\cite{BjornerFranklStanley1987:CohenMacaulayComplexes}, part of the difficulty in finding a numerical characterization of the fine $f$-vectors of arbitrary ${\bf a}$-colored complexes lies in the non-uniqueness of color-compressed ${\bf a}$-colored complexes with a given fine $f$-vector. Furthermore, even for (the less refined) $f$-vectors, Theorem \ref{main-thm-f-vectors-colored} tells us that the numerical characterization of the $f$-vectors of ${\bf 1}_n$-colored complexes, proven by Frankl-F\"{u}redi-Kalai~\cite{FFK1988}, does not extend to arbitrary ${\bf a} \in \mathbb{P}^n$.

An {\it ${\bf a}$-balanced} complex is an ${\bf a}$-colored complex $(\Delta, \pi)$ satisfying $|{\bf a}| = \dim \Delta + 1$. In particular, we say $(\Delta, \pi)$ is {\it completely balanced} if ${\bf a} = {\bf 1}_{(\dim \Delta + 1)}$, and important examples include Coxeter complexes and order complexes of posets. Recently, the author~\cite{Chong2013:GenMacReps} introduced the notion of Macaulay decomposability for simplicial complexes and used it to obtain a numerical characterization of the fine $f$-vectors of ${\bf a}$-colored complexes for all ${\bf a} \in \mathbb{P}^n$. Via a unified approach, he also gave a numerical characterization of the fine $f$-vectors of completely balanced Cohen-Macaulay complexes in \cite{Chong2013:GenMacReps}.

\section*{Acknowledgements}
The author thanks the anonymous referees for valuable comments that greatly improved the presentation of the paper. The author also thanks the anonymous referee who suggested the current proof of Theorem \ref{thm:color-comp}, which is significantly shorter than the original proof.

\bibliographystyle{plain}
\bibliography{References}

\end{document}